\def\beq{\begin{equation}}
\def\eeq{\end{equation}}
\def\det{\mathrm{det}\ }
\def\ac{\mathrm{Acc}}
\def\cac{\mathrm{C}}
\newcommand{\cC}{{\mathcal C}}
\newcommand{\cD}{{\mathcal D}}
\newcommand{\cW}{{\mathcal W}}
\newcommand{\cWcu}{{\mathcal W}^{cu}}
\newcommand{\cWcs}{{\mathcal W}^{cs}}
\newcommand{\cWc}{{\mathcal W}^{c}}
\newcommand{\cWu}{{\mathcal W}^{u}}
\newcommand{\cWs}{{\mathcal W}^{s}}
\newcommand{\cPH}{{\mathcal{PH}}}
\newcommand{\cU}{{\mathcal U}}
\newcommand{\Z}{{\mathbb Z}}
\newcommand{\R}{{\mathbb R}}
\newcommand{\N}{{\mathbb N}}
\newtheorem{theorem}{Theorem}[section]
\newtheorem{remark}[theorem]{Remark}
\newtheorem{lemma}[theorem]{Lemma}
\newtheorem{conj}[theorem]{Conjecture}
\newtheorem{defi}[theorem]{Definition}
\newtheorem{prop}[theorem]{Proposition}
\newtheorem{corollary}[theorem]{Corollary}
\newtheorem*{main thm}{Main Theorem}
\newtheorem*{main thm bis}{Main Theorem (alternate version)}
\newtheorem{theoalph}{Theorem}
\begin{document}
\numberwithin{equation}{section}

\title[Accessibility for P.H. Diffeomorphisms with $2$-dimensional center]{Accessibility for dynamically coherent partially hyperbolic diffeomorphisms with  $\mathbf{2D}$  center}

\author[Martin Leguil]{Martin Leguil$^{*}$}
\thanks{$^{*}$M.L. was supported by the ERC project 692925 NUHGD of Sylvain Crovisier and the NSERC Discovery grant of Jacopo De Simoi, reference number 502617-2017.} 
\author[Luis Pedro Pi\~neyr\'ua]{Luis Pedro Pi\~neyr\'ua$^{**}$}
\thanks{$^{**}$L.P.P. was supported by CAP's doctoral scholarship and CSIC group 618}

\maketitle

\begin{abstract}
	We show that for any integer $r \geq 2$, stable accessibility   is $C^r$-dense among partially hyperbolic diffeomorphisms with two-dimensional center that satisfy some strong bunching and are stably dynamically coherent. 
\end{abstract}

\tableofcontents

\section{Introduction}

In 1871 L. Boltzmann stated his ergodic hypothesis when he was studying the motion of gases and thermodynamics. He wanted a property that could let him ``characterize the probability of a state by the average time in which the system \emph{is} in this state". Since then, ergodicity has played a key role in dynamical systems, physics and probability. Recall that a dynamical system $f\colon M\to M$ preserving a finite measure $m$ is \emph{ergodic} if every $f$-invariant set has zero or total measure. 

After Birkhoff's ergodic theorem,  E. Hopf proved in 1939 the ergodicity of the geodesic flow on a surface of constant negative curvature, introducing an argument to get ergodicity which is now called Hopf's argument \cite{H}. Twenty eight years later, D. Anosov \cite{A} improved Hopf's results by proving the ergodicity of the geodesic flow on surfaces of negative (non necessarily constant) curvature and compact manifolds of constant negative curvature. He also showed the ergodicity of uniformly hyperbolic diffeomorphisms, now called Anosov diffeomorphisms. Since hyperbolicity is a $C^{1}$-robust condition, Anosov diffeomorphisms became the first example of \emph{stably ergodic} diffeomorphisms, that is, a $C^{r}$ ergodic diffeomorphism (preserving a measure $m$) that remains ergodic after a $C^{1}$-small perturbation. 

For almost thirty years Anosov diffeomorphisms were the only known examples of stably ergodic systems, until 1995 when M. Grayson, C. Pugh and M. Shub \cite{GPS} proved the $C^{2}$ stable ergodicity of the time-one map of the geodesic flow on surfaces of constant negative curvature, hence the first non-Anosov stably ergodic example. Despite being non globally hyperbolic, this example has a weak form of hyperbolicity called \emph{partial hyperbolicity}. With the evidence of this work they formulated in a 1995 conference \cite{PS} the following conjecture:

\begin{conj}[Pugh-Shub's stable ergodicity conjecture \cite{PS,PS1}] \label{psconj}
On any compact connected   Riemannian manifold, stable ergodicity is $C^{r}$-dense among the set of volume preserving partially hyperbolic diffeomorphisms, for any integer $r\geq 2$.
\end{conj}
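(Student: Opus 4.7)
The plan is to attack the conjecture via the two-step program formulated by Pugh and Shub themselves: (A) show that accessibility, plus mild auxiliary hypotheses, implies ergodicity among volume-preserving partially hyperbolic diffeomorphisms; and (B) show that stable accessibility is $C^r$-dense in the space $\mathrm{PH}^r_m(M)$ of volume-preserving $C^r$ partially hyperbolic diffeomorphisms. Combining a density statement with an ``accessibility implies ergodicity'' statement then yields the desired $C^r$-density of stable ergodicity.

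For step (A), I would rely on a Hopf-type argument adapted to the partially hyperbolic setting. Birkhoff averages of continuous observables are automatically constant along strong-stable and strong-unstable leaves; accessibility transports these equalities along $su$-paths, forcing the averages to be essentially constant on $M$. The delicate point is to promote the constancy from a single accessibility class of full measure to a full-measure saturation, which requires absolute continuity of the center-stable and center-unstable holonomies. I would invoke the \emph{julienne density} technique of Burns--Wilkinson under a center bunching hypothesis. Since center bunching is $C^1$-open, this reduces the conjecture to producing a $C^r$-dense set of center-bunched stably accessible diffeomorphisms.

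For step (B), I would proceed by explicit perturbation. Given $f\in \mathrm{PH}^r_m(M)$ and a point $x$ whose accessibility class is not open, I would perform a $C^r$-small, localized perturbation that creates a new $su$-transition escaping that class; a Baire-category argument then produces a residual set on which every accessibility class is open, hence, by connectedness of $M$, every diffeomorphism in this residual set is accessible. A further perturbation, controlling transversality of $E^s$ and $E^u$ inside suitable $su$-loops (as in Niti\c{c}\u{a}--T\"or\"ok and Dolgopyat--Wilkinson), would upgrade accessibility to stable accessibility. The present paper supplies exactly this step when the center bundle is $2$-dimensional, under the extra hypotheses of strong bunching and stable dynamical coherence.

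The main obstacle, and the reason the conjecture remains open, is carrying out step (B) in full generality. For $1$-dimensional center, work of Rodriguez Hertz--Rodriguez Hertz--Ures settles density because one can always shift points along the center by small perturbations of short $su$-loops. For higher-dimensional centers, the center holonomy is too rigid for this direct scheme, and worse, the center foliation need not exist or vary continuously. Removing the strong bunching and dynamical coherence hypotheses used in the present paper would require either a perturbation theory that dispenses with a well-defined center foliation, or a density result showing that every partially hyperbolic diffeomorphism can be $C^r$-perturbed to a dynamically coherent one; both are substantial open problems and constitute the principal gap between the theorem proved here and the full Pugh--Shub conjecture.
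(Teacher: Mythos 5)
The statement you were asked to prove is Conjecture \ref{psconj}, which the paper explicitly presents as an \emph{open conjecture}; the paper contains no proof of it, and neither does your proposal. What you have written is an accurate survey of the Pugh--Shub program (splitting the problem into Conjecture \ref{conjerg}, resolved by Burns--Wilkinson via julienne density under center bunching, and Conjecture \ref{conjacc}, the $C^r$-density of stable accessibility), together with a candid admission in your final paragraph that step (B) cannot currently be carried out in full generality. That admission is correct, and it is precisely the gap: your ``explicit perturbation'' scheme for step (B) is only a heuristic. For center dimension at least $2$ there is no known way to guarantee that a $C^r$-small localized perturbation moves the endpoint of an $su$-loop in enough independent center directions to open a non-open accessibility class, without strong structural hypotheses (a well-defined, persistently coherent center foliation; regularity and continuity of stable/unstable holonomies between center leaves, which requires bunching conditions beyond center bunching; control of return times to the support of the perturbation). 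The Baire-category argument you invoke also does not by itself yield \emph{stable} accessibility, nor $C^r$-density of the stable property, without a quantitative submersion-type statement from perturbation space to phase space.

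For comparison, the paper's actual contribution (Theorem \ref{premier theo}) implements your step (B) only for $\dim E_f^c=2$ under strong bunching (Definition \ref{stron bunch}), dynamical coherence and plaque expansiveness, and the technical core is exactly the machinery your sketch omits: the classification of center accessibility classes as $0$-, $1$- or $2$-dimensional (Theorem \ref{theorem structure}), the $C^1$-continuity of one-dimensional classes under perturbation (Proposition \ref{lemma var des classes}), the construction of adapted closed $us$-loops (Section \ref{section adapt loop}), the submersion from the space of perturbations to the center leaf (Proposition \ref{determinant for smooth deformations} and Corollary \ref{coro un peu chiant}), and the globalization via spanning $c$-families. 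So your proposal correctly identifies the architecture of the problem but does not prove the conjecture; no one has, and the paper does not claim to.
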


They also proposed a program in order to prove this, and split the conjecture into two conjectures:

\begin{conj}[Accessibility implies ergodicity] \label{conjerg}
A $C^{2}$ partially hyperbolic  volume preserving diffeomorphism with the essential accessibility property is ergodic.
\end{conj}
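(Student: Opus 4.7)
The plan is to adapt Hopf's classical argument for the ergodicity of geodesic flows on negatively curved surfaces to the partially hyperbolic setting, following the program outlined by Pugh and Shub in \cite{PS1}.

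Let $\varphi \in C(M,\R)$ and let $\varphi^{\pm}(x) := \lim_{n\to\pm\infty} \frac{1}{n}\sum_{k=0}^{n-1} \varphi \circ f^k(x)$ denote the forward and backward Birkhoff averages. By Birkhoff's ergodic theorem both exist and coincide on a full-measure set $R \subset M$. Ergodicity of $f$ is equivalent to the $m$-a.e.\ constancy of $\varphi^+$ for every $\varphi$ in a countable dense subfamily of $C(M,\R)$, so it suffices to prove that statement. First I would carry out the classical Hopf step: if $y \in \cW^s(x)$, then $d(f^n(x),f^n(y)) \to 0$ exponentially, so by uniform continuity of $\varphi$ one has $\varphi^+(y)=\varphi^+(x)$ whenever either limit exists. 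Symmetrically $\varphi^-$ is constant along $\cW^u$. Hence $\tilde\varphi := \varphi^+|_R = \varphi^-|_R$ is simultaneously $\cW^s$- and $\cW^u$-saturated on $R$.

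Next I would invoke the absolute continuity of the strong foliations $\cW^s, \cW^u$, which holds for $C^2$ (indeed $C^{1+\alpha}$) partially hyperbolic systems by the classical theorem of Brin--Pesin. A Fubini-type argument should then promote the a.e.\ coincidence $\varphi^+ = \varphi^-$ to the statement that $\tilde\varphi$ is essentially constant along $su$-paths. Since essential accessibility asserts that there is a single accessibility class of full $m$-measure, $\tilde\varphi$ would then be $m$-a.e.\ constant, and ergodicity would follow.

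The main obstacle is concealed in the previous step: the $m$-a.e.\ identity $\varphi^+ = \varphi^-$ does \emph{not} immediately yield pointwise identity at the endpoints of an $su$-path of arbitrary finite length, because the null set on which it fails can intersect stable and unstable leaves in complicated ways and the defect can accumulate along an accessibility sequence. The technical heart of the argument, developed by Pugh--Shub and refined by Burns--Wilkinson, is the \emph{julienne} construction: one produces nested dynamically-defined neighbourhoods of $x$ adapted to the center direction, shows that the $\cW^s$- and $\cW^u$-holonomies between juliennes have controlled distortion with respect to these reference scales, and uses this to propagate Lebesgue density points across any finite $su$-sequence. This is precisely the step in which partial hyperbolicity, $C^2$ regularity, and the geometry of the center bundle interact most subtly; with present techniques it requires a \emph{center bunching} condition on $Df$ when $\dim E^c \geq 2$. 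Removing this last hypothesis, so as to obtain Conjecture~\ref{conjerg} in its full generality, remains the principal open problem along this route.
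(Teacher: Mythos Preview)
The statement you are addressing is labelled \emph{Conjecture}~\ref{conjerg} in the paper, and the paper offers no proof of it: it is recorded there as one of the two Pugh--Shub subconjectures, with the remark that the best result to date is Burns--Wilkinson \cite{BW1}, which establishes ergodicity under the additional hypothesis of center bunching. So there is no ``paper's own proof'' to compare against.

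Your write-up is an accurate survey of the Hopf/julienne strategy and you correctly isolate the genuine obstruction: the julienne density-point argument, in all known incarnations (Pugh--Shub \cite{PS2}, Burns--Wilkinson \cite{BW1}), requires center bunching to control the geometry of the dynamically defined boxes and the quasi-conformality of $su$-holonomies between them. Your final paragraph is honest about this, so what you have written is not a proof of Conjecture~\ref{conjerg} but rather a correct explanation of why it remains open. If your intention was to \emph{prove} the conjecture, the gap is exactly the one you name: without bunching, no one currently knows how to show that the set of Lebesgue density points of $\{\tilde\varphi>c\}$ is saturated by both $\cW^s$ and $\cW^u$, because the julienne quasi-conformality estimates break down. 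If instead your intention was to document the state of the art, then your account matches the paper's own discussion and the references it cites.
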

Here, \emph{essential accessibility} is a measure-theoretic version of the accessibility property. 

\begin{conj}[Density of accessibility] \label{conjacc}
For any integer $r\in [2,+\infty]$, stable accessibility is open and dense among the set of $C^{r}$ partially hyperbolic diffeomorphisms, volume preserving or not.
\end{conj}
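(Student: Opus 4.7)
The plan is to separate the conjecture into a local step and a global step, in the spirit of the strategy initiated by Dolgopyat--Wilkinson in the $C^{1}$ category. Since openness of stable accessibility is already known (Didier), only density requires proof. Fix $f \in \mathrm{PH}^{r}(M)$ and a $C^{r}$-neighborhood $\mathcal{U}$ of $f$. The first step is a reduction to a perturbation supported near a single hyperbolic periodic point. After an arbitrarily small perturbation inside $\mathcal{U}$, one may assume (using Pugh's closing lemma and density of homoclinic classes inside chain-recurrence classes) that there exists a hyperbolic periodic point $p$ whose $su$-accessibility class is already dense in $M$. Hence it suffices to produce a further $C^{r}$-small perturbation, compactly supported in a small neighborhood of the orbit of $p$, after which the accessibility class of $p$ contains a nonempty open set; openness then upgrades denseness to $\mathrm{AC}(p) = M$.

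The second, main step is to construct this perturbation. I would build a $k$-parameter family $(f_{t})_{t \in [0,\epsilon]^{k}}$ of $C^{r}$-small, compactly supported perturbations of $f$, with $k = \dim E^{c}$, and consider the map
\[
\Phi \colon t \longmapsto \pi_{c}\bigl(\mathrm{hol}^{su}_{f_{t},\gamma}(p) - p\bigr) \in E^{c}(p),
\]
where $\gamma$ is an $su$-loop at $p$ and $\pi_{c}$ is the projection on $E^{c}$ along $E^{s}\oplus E^{u}$. If $\Phi$ is a submersion at $t = 0$, its image covers an open neighborhood of the origin in $E^{c}(p)$; combined with the local product structure of $\mathcal{W}^{s}$ and $\mathcal{W}^{u}$ at $p$, this yields an open set in $\mathrm{AC}(p)$, completing the argument. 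To exhibit such a family one would build, one coordinate at a time, perturbations whose infinitesimal effect on $\Phi$ equals, up to controlled error terms, a prescribed nonzero vector in $E^{c}(p)$. Following the variation-of-constants formulation along $\mathcal{W}^{s}$ and $\mathcal{W}^{u}$ used by Burns--Wilkinson and Rodriguez Hertz--Rodriguez Hertz--Ures, this amounts to evaluating a sum of derivatives of the perturbation along the iterates of the $su$-loop, projected onto $E^{c}$.

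The hardest points are precisely those where the present paper assumes more: arbitrary center dimension, absence of dynamical coherence, and absence of strong bunching. To dispense with dynamical coherence, I would avoid invoking center manifolds altogether and work only with the bundle $E^{c}$ itself, defining the ``infinitesimal $su$-displacement'' as an $E^{c}$-valued cocycle obtained by projecting the exact $su$-displacement via the Anosov splitting; this replaces manifold-level geometric arguments by bundle-level cocycle arguments. Without bunching, the projections $\pi_{c}$ along $E^{s}\oplus E^{u}$ are only H\"older continuous, so I would use finite-increment rather than differential comparisons, exploiting the spatial localization of the perturbation to keep the effective supports small compared to the H\"older modulus. For the multi-parameter transversality, I would argue genericity: the set of parameter families for which $\Phi$ fails to be a submersion at $0$ is contained in a proper subvariety of the space of $C^{r}$ perturbation families, which can be avoided by a standard Thom-type transversality argument in the Banach space of admissible perturbations.

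The main obstacle I anticipate is controlling the infinitesimal $su$-holonomy without dynamical coherence: because $\pi_{c}$ is only H\"older in the base point and the $su$-loop is obtained by iterated concatenation, errors accumulate along the orbit and must be compensated by the contraction and expansion of $E^{s}$ and $E^{u}$. Making this compensation work in the $C^{r}$ category for $r \geq 2$ and without a bunching assumption almost certainly requires a substitute for the normal-form/center-jet methods used in the coherent, 2D-center case of this paper, likely some form of graph-transform argument applied to a formal jet of center leaves that is invariant only up to $C^{r}$-small error. I expect this to be where essentially new ideas beyond those in the present article are needed.
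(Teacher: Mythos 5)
The statement you are proving is Conjecture \ref{conjacc}, which is an \emph{open conjecture}; the paper does not prove it, and only establishes a special case (Theorem \ref{premier theo}: two-dimensional center, strong bunching as in Definition \ref{stron bunch}, stable dynamical coherence). Your proposal, read as a proof of the full conjecture, has several concrete gaps. First, your reduction step invokes Pugh's closing lemma to produce, by a perturbation inside a $C^r$-neighbourhood, a hyperbolic periodic point with dense $su$-accessibility class. The closing lemma is a $C^1$ tool; no $C^r$ closing lemma is available for $r\geq 2$, so this perturbation cannot be kept $C^r$-small, and the density of the accessibility class of the resulting periodic point is asserted without justification. Second, and more structurally, choosing the base point $p$ \emph{periodic} is incompatible with the mechanism that makes the submersion argument work: the quantitative submersion estimate (Proposition \ref{determinant for smooth deformations}, imported from \cite{LZ}) requires the deformation to be \emph{adapted}, in particular that the return time of the corners of the loop to $\mathrm{supp}(V)$ exceed a large threshold $R_0$ (Definition \ref{defiadaptedtosth}); at a periodic point the orbit returns to any neighbourhood with bounded period, the error terms accumulated along the orbit are not summable against the contraction, and the determinant bound is lost. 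This is precisely why the paper perturbs only at \emph{non-periodic} points (Propositions \ref{prop break zero} and \ref{main prop bis}) and then globalizes with spanning $c$-families chosen to have large return times (Proposition \ref{prop acc mod den dis}).

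Third, your global step is logically insufficient: an accessibility class that is open and dense need not equal $M$, since its complement can be a nonempty closed union of lower-dimensional classes; accessibility requires \emph{every} class to be open (plus connectedness), which is why the paper must treat all points of a spanning family and verify that later perturbations do not re-close classes opened earlier. Fourth, the ``Thom-type transversality'' shortcut does not substitute for the analytic content: the map $\Phi$ depends on the perturbation through holonomies whose differentiability in the parameter is exactly what the suspension construction (Lemma \ref{lemma T deformation}) and the strong bunching hypothesis are needed for; without bunching and dynamical coherence the $C^1$-regularity of $b\mapsto H_{T,\hat\gamma}$ is unknown, so the set of ``bad'' parameter families is not known to be a proper subvariety of anything, and there is no ambient smooth structure in which to run a Sard-type argument. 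You correctly identify in your last paragraph that new ideas are needed at precisely these points; as written, the proposal is a plausible strategy outline for the known special cases rather than a proof of the conjecture.
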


There has been a lot of progress on these conjectures, mostly depending on the topology and the dimension of the central bundle. 

The main conjecture was proven in \cite{RHRHU} in the case where $\dim E^{c}=1$ and for  the $C^{r}$ topology (in fact the authors showed $C^{\infty}$-density). Recently in \cite{ACW} the conjecture was proved in its full generality (any central dimension) for the $C^{1}$ topology. Despite these remarkable results, in the $C^{r}$ case for $r\geq 2$ the conjecture is far from being solved. Recently, M. Leguil and Z. Zhang \cite{LZ} obtained $C^{r}$-density of stable ergodicity for partially hyperbolic diffeomorphisms (for any center dimension) with a strong pinching condition, introducing a new technique based on  random perturbations.  

With respect to Conjecture \ref{conjerg}, C. Pugh and M. Shub   \cite{PS2} proved that a $C^{2}$ volume preserving partially hyperbolic  diffeomorphism that is dynamically coherent, center bunched and with the essential accessibility property is ergodic. The state-of-the-art on Conjecture \ref{conjerg} is the result of K. Burns and A. Wilkinson  \cite{BW1} where the authors improved Pugh-Shub's result by removing the dynamical coherence hypothesis, and weakening the center bunching condition. In other words, by these works, a possible strategy to show that stable ergodicity is typical in the $C^r$ topology would be to go further towards Conjecture \ref{conjacc}, i.e., that stable accessibility is $C^r$-dense.

Regarding Conjecture \ref{conjacc}, in \cite{DW, ACW2} stable accessibility is obtained for a $C^{1}$-dense set of $\bullet$ all $\bullet$  volume preserving $\bullet$ symplectic partially hyperbolic diffeomorphisms. The authors strongly use $C^{1}$ techniques which seem hard to generalize to other topologies.  

For the $\dim E^{c}=2$ case, there has been many results lately. The first one is the remarkable result by F. Rodr\'iguez-Hertz \cite{RH} where he classified the central accessibility classes and obtained  stable ergodicity of certain automorphisms on the torus $\mathbb{T}^{d}:=\R^d/\Z^d$. Elaborating on these ideas, in \cite{HS} V. Horita and M. Sambarino proved stable ergodicity for skew-products of surface diffeomorphisms over Anosov diffeomorphisms. Recently, A. \'Avila and M. Viana \cite{AV} obtained $C^{1}$-openness of accessibility and $C^{r}$-density for certain 
\emph{fibered} partially hyperbolic diffeomorphisms with $2$-dimensional center bundle
using different techniques. 

Our purpose in this article is to contribute to the accessibility conjecture (Conjecture \ref{conjacc}) by proving the $C^{r}$-density of accessibility for (stably) dynamically coherent partially hyperbolic diffeomorphism with $2$-dimensional center bundle which satisfy some strong bunching condition, for any integer $r\geq 2$.

\section*{Acknowledgements}
M.L. wishes to thank the hospitality of Universidad de la Rep\'ublica for a stay during which this work was initiated, and of Université Paris-Saclay while he was a postdoc there, as well as the support of the ERC project 692925 NUHGD of Sylvain Crovisier. The authors would like to thank Sylvain Crovisier, Rafael Potrie and Mart\'in Sambarino (director of the second author) for many discussions about this work, as well as the support of the CSIC group 618. We also wish to thank Carlos Matheus, Davi Obata and Zhiyuan Zhang for several conversations. 

\section{Preliminaries}\label{sec prelim}

\subsection{Partially hyperbolic diffeomorphisms}

Let us fix a compact Riemannian manifold  $M$  of dimension $m \geq 3$. We denote by $\mathrm{Vol}$ the volume form, and we denote by $\|\cdot\|$ the norm on $TM$ associated to the Riemannian metric. We say that a diffeomorphism $f$ of $M$ is  \emph{partially hyperbolic}  if there exist  a nontrivial $Df$-invariant splitting $TM=E_f^s \oplus E_f^c \oplus E_f^u$ of the tangent bundle and continuous positive functions $\nu,\hat \nu,\gamma,\hat \gamma$ with 
\begin{equation}\label{functions nu gamma}
\nu,\hat \nu<1,\qquad \nu< \gamma<\hat \gamma^{-1}< \hat \nu^{-1},
\end{equation}
such that for any $(x,v)\in TM$, it holds
\begin{align*}
	&\|D_x f(v)\|<\nu(x) \|v\|, & \text{if }v \in E_f^s(x)\setminus \{0\},\\
	\gamma(x)  \|v\| <\ & \|D_x f(v)\|< \hat \gamma(x)^{-1} \|v\|, &\text{if } v \in E_f^c(x)\setminus \{0\},\\
	\hat \nu(x)^{-1} \|v\| <\ & \|D_x f(v)\|, & \text{if } v \in E_f^u(x)\setminus \{0\}. 
\end{align*}
For any integer $r \geq 1$, we denote by $\cPH^r(M)$ the set of all partially hyperbolic diffeomorphisms of $M$ of  class $C^r$; we also denote by  $\cPH^r(M,\mathrm{Vol})\subset \cPH^r(M)$ the subset of volume preserving partially hyperbolic diffeomorphisms.

In the rest of this section, we fix an integer $r \geq 1$ and we consider a partially hyperbolic diffeomorphism $f\in\cPH^r(M)$. We will denote $d_s:=\dim E_f^s$ and $d_u:=\dim E_f^u$. 
The strong bundles $E_f^u$ and $E_f^s$ are uniquely integrable to continuous foliations $\cWu_f$ and $\cWs_f$ respectively, called the \emph{strong unstable} and \emph{strong stable} foliations.  For $* = u,s$, and  for any $x \in M$, we denote by $\cW_f^*(x)$ the leaf of $\cW_f^*$ through $x$. The foliation $\cW_f^*$ is invariant under the dynamics, i.e.,  $f(\cW_f^*(x))=\cW_f^* (f(x))$, for all $x \in M$.  Moreover,  each leaf $\cW_f^*(x)$, $x \in M$, is an immersed $C^{r}$ manifold.  

\subsection{Dynamical coherence, plaque expansiveness}
  
The partially hyperbolic diffeomorphism $f$ is \emph{dynamically coherent} if the \emph{center-unstable} bundle $E_f^{cu}:=E_f^c \oplus E_f^u$ and the \emph{center-stable} bundle $E_f^{cs}:=E_f^c \oplus E_f^s$  integrate respectively to foliations $\cW^{cu}_f$, $\cW^{cs}_f$, called the \emph{center-unstable foliation}, resp. the \emph{center-stable foliation}, where $\cW^{u}_f$ subfoliates $\cWcu_f$, while $\cWs_f$ subfoliates $\cWcs_f$. 
In this case, the collection $\cWc_f$ obtained by intersecting the leaves of $\cWcs_f$ and $\cWcu_f$ is a foliation which integrates $E_f^{c}$, and subfoliates both $\cWcs_f$ and $\cWcu_f$; it is called the \emph{center foliation}.  

In the following,  for any $*\in \{s,c,u,cs,cu\}$, we denote by $d_{\mathcal{W}_f^*}$ the leafwise distance, and for any $x \in M$, for any  $\epsilon>0$, we denote by $\cW_f^*(x,\varepsilon):=\{y \in \cW_f^*(x): d_{\cW_f^*}(x,y)< \varepsilon\}$ the $\varepsilon$-ball in $\cW_f^*$ of center $x$ and radius $\varepsilon$.

It is an open question whether dynamical coherence is a $C^1$-open condition. A closely related property is \emph{plaque expansiveness}.

\begin{defi}[Plaque expansiveness]\label{plqauexp}
	We say that $f$ is  \emph{plaque expansive} (see \cite[Section 7]{HPS}) if $f$ is dynamically coherent and there exists  $\varepsilon>0$ with the following property: if $(p_n)_{n \geq 0}$ and $(q_n)_{n \geq 0}$ are $\varepsilon$-pseudo orbits which respect $\cW_f^c$ such that $d(p_n,q_n)\leq \varepsilon$ for all $n \geq 0$, then $q_n\in \cW_f^c(p_n)$. It is known that plaque expansiveness is a $C^1$-open condition (see Theorem 7.4 in \cite{HPS}).
\end{defi}

The following result is due to Hirsch-Pugh-Shub. 
\begin{theorem}[Theorem 7.1, \cite{HPS}, see also Theorem 1 in \cite{PSW2}]\label{thmplaqueexpansivetostablydc}
	Let us assume that $f$ is dynamically coherent and plaque expansive. Then any $g \in \cPH^1(M)$ which is sufficiently $C^1$-close to $f$ is also dynamically coherent and plaque expansive. Moreover, there exists a homeomorphism $\mathfrak{h}=\mathfrak{h}_g \colon M\to M$,
	called a \emph{leaf conjugacy}, such that  $\mathfrak{h}$ maps a $f$-center leaf to a $g$-center leaf, and $\mathfrak{h} \circ f(\mathcal{W}_f^c(\cdot))= g \circ \mathfrak{h}(\mathcal{W}_f^c(\cdot))$. 
\end{theorem}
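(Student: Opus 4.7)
The strategy is to deduce the statement from the persistence theory for normally hyperbolic laminations developed in Chapter 7 of \cite{HPS}. The partial hyperbolicity inequalities \eqref{functions nu gamma} mean precisely that $f$ is normally hyperbolic to its center foliation $\cWc_f$: along any center leaf, the contraction on $E_f^s$ dominates the central contraction rate $\gamma^{-1}$, and the expansion on $E_f^u$ dominates the central expansion rate $\hat\gamma^{-1}$. For any $g$ sufficiently $C^1$-close to $f$, the splitting $E_g^s \oplus E_g^c \oplus E_g^u$ persists and is close to that of $f$, and the same domination holds with slightly perturbed rates.

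First, I would set up a graph transform for $g$ over the center foliation of $f$. Using a tubular neighborhood of the diagonal in $M \times M$ identified with a neighborhood of the zero section of a ``normal bundle'' to $\cWc_f$, a candidate leaf conjugacy is a continuous section $\sigma$ which is Lipschitz along $f$-center plaques and whose image is weakly $g$-invariant, in the sense that the $g$-image of each $\sigma$-transported plaque lies inside another such plaque. The normal hyperbolicity of $f$ together with the closeness of $g$ makes the induced graph transform a contraction on a suitable complete metric space of such sections, so it admits a unique fixed point $\sigma = \mathfrak{h}$, which is automatically a homeomorphism close to the identity and satisfies the conjugacy relation $\mathfrak{h} \circ f(\cWc_f(\cdot)) = g \circ \mathfrak{h}(\cWc_f(\cdot))$ by construction.

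Second, I would exploit plaque expansiveness to derive dynamical coherence and plaque expansiveness of $g$. The image $\mathfrak{h}(\cWc_f)$ is a $g$-invariant lamination with $C^r$ leaves tangent to a continuous distribution $F^c$ close to $E_g^c$; plaque expansiveness of $f$ (transported through $\mathfrak{h}$) certifies that the equation defining weak $g$-invariance has a unique solution, so the lamination must integrate $E_g^c$. Once $E_g^c$ is integrable, a further graph transform argument inside the $g$-strong unstable and strong stable leaves integrates $E_g^{cu}$ and $E_g^{cs}$, yielding dynamical coherence of $g$. Plaque expansiveness of $g$ then follows from the uniform continuity of $\mathfrak{h}$ applied to any pair of $g$-plaque pseudo orbits at bounded distance. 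The main obstacle is identifying the abstract invariant lamination produced by the graph transform with the genuine center foliation of $g$: without pseudo-orbit shadowing, one cannot rule out additional $g$-invariant laminations transverse to $E_g^c$, and plaque expansiveness is exactly the ingredient that forces this identification.
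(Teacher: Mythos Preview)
The paper does not supply its own proof of this theorem: it is quoted as Theorem 7.1 of \cite{HPS} (with a pointer to \cite{PSW2}) and used as a black box, so there is no in-paper argument to compare against. Your sketch is a faithful outline of the Hirsch--Pugh--Shub proof itself: the graph transform over the normally hyperbolic lamination $\cWc_f$ produces a nearby $g$-invariant lamination and the leaf conjugacy $\mathfrak{h}$, and plaque expansiveness is precisely the hypothesis that rules out spurious invariant laminations and forces the limit to be tangent to $E_g^c$.

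One place where your write-up is slightly loose: the step ``the lamination must integrate $E_g^c$'' does not follow directly from uniqueness of the fixed point of the graph transform. What one actually argues is that the tangent planes to the $g$-invariant lamination form a $Dg$-invariant plane field lying in a cone around $E_g^c$; by uniqueness of the dominated splitting (cone-field criterion), this plane field must equal $E_g^c$. Plaque expansiveness is used earlier, to guarantee that the leaf conjugacy is a genuine homeomorphism (injective) and that the lamination is a foliation rather than merely a branched lamination. Apart from this reordering of the logic, your proposal matches the cited argument.
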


\subsection{Holonomies}

Let us assume that the diffeomorphism $f$ is dynamically coherent. 
Let $x_1\in M$ and let $x_2 \in M$ be sufficiently close to $x_1$.\footnote{In the rest of the paper, all the constructions will be local.} By transversality, there exist a neighbourhood  $\mathcal{U}_1$ of $x_1$ within $\cWcu_f(x_1)$  and a neighbourhood $\mathcal{U}_{2}$ of $x_2$ within $\cWcu_f(x_2)$ such that for any $z \in  \mathcal{U}_1$, the local stable leaf through $z$ intersects  $\mathcal{U}_2$ at a  unique point,  denoted by $H_{f,x_1,x_2}^{s}(z)\in \mathcal{U}_{x_2}^{cu}$. We thus get a well defined local homeomorphism
\begin{equation*}
	H_{f,x_1,x_2}^{s}\colon
	\mathcal{U}_1 \to \mathcal{U}_2 \subset \cWcu_f(x_2),
\end{equation*}
called the  \emph{stable holonomy map}. Note that as a consequence of dynamical coherence, if $x_2\in \mathcal{W}_{f,\mathrm{loc}}^s(x_1)$, then the image of the restriction $H_{f,x_1,x_2}^{s}|_{\mathcal{U}_1 \cap \cWc_f(x_1)}$ to the center leaf $\cWc_f(x_1)$ is contained in the center leaf $\cWc_f(x_2)$.  
Unstable holonomies are defined in a similar way, following local unstable leaves. 

\begin{defi}[Center bunching]
We say that $f$ is \emph{center bunched} if the functions    $\nu,\hat \nu,\gamma,\hat \gamma$ in \eqref{functions nu gamma} can be chosen such that 
$$
\max(\nu,\hat \nu)< \gamma \hat \gamma.
$$
\end{defi}


\begin{theorem}[see \cite{HPS} and Theorem B in \cite{PSW1}]\label{cor smooth holonomy maps}
	If $f \in \cPH^{2}(M)$ is dynamically coherent and center bunched, then local stable/unstable holonomy maps between center leaves are $C^1$ when restricted to some center-stable/center-unstable leaf and have uniformly continuous derivatives. 
\end{theorem}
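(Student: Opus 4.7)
The plan is to prove this via the $C^r$-section theorem of Hirsch--Pugh--Shub, applied to a suitably constructed bundle whose continuous invariant section will be identified with the derivative of the holonomy. Fix $x_1 \in M$ and $x_2 \in \cWs_{f,\mathrm{loc}}(x_1)$, so that by dynamical coherence the stable holonomy $H_{f,x_1,x_2}^s$ maps a neighborhood of $x_1$ in $\cWc_f(x_1)$ to a neighborhood of $x_2$ in $\cWc_f(x_2)$, both sitting inside $\cWcs_f(x_1)$. The starting point is the invariance identity $f \circ H_{f,x_1,x_2}^s = H_{f,f(x_1),f(x_2)}^s \circ f$, which, if the holonomy were $C^1$, would force its derivative to satisfy the cocycle equation
\[ D H_{f,f(x_1),f(x_2)}^s \circ D_{x_1}f|_{E^c_f} = D_{x_2}f|_{E^c_f} \circ D H_{f,x_1,x_2}^s.\]

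First I would set up the abstract bundle. Let $B \subset M \times M$ denote the set of pairs $(x_1,x_2)$ with $x_2 \in \cWs_{f,\mathrm{loc}}(x_1)$; this is invariant under the base map $F(x_1,x_2) := (f(x_1),f(x_2))$, and along $F$-orbits the distance within stable leaves contracts at rate $\nu$ (and $\hat\nu$ for the symmetric unstable case). Over $B$, consider the bundle $\mathcal{E}$ with fiber $\mathcal{E}_{(x_1,x_2)} := L(E^c_f(x_1),E^c_f(x_2))$, endowed with the natural operator norm, and the fiber map
\[ \Phi(x_1,x_2,A) := \bigl(F(x_1,x_2),\ D_{x_2}f|_{E^c_f} \circ A \circ (D_{x_1}f|_{E^c_f})^{-1}\bigr).\]
The fiberwise Lipschitz constant of $\Phi$ is bounded by $\hat\gamma^{-1}\gamma^{-1} = (\gamma\hat\gamma)^{-1}$, while the base dynamics contracts transversals inside $\cWcs_f$ at rate $\max(\nu,\hat\nu)$. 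The center bunching hypothesis $\max(\nu,\hat\nu) < \gamma \hat\gamma$ thus yields the strict fiber-over-base contraction needed by the HPS section theorem, producing a unique continuous $\Phi$-invariant section $(x_1,x_2) \mapsto A_\infty(x_1,x_2)$.

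Next I would identify $A_\infty$ with the actual differential of $H_{f,x_1,x_2}^s$. The natural candidate proof goes as follows: one shows by standard graph-transform estimates in $\cWcs_f$ that $H_{f,x_1,x_2}^s$ is Lipschitz with a uniform constant, and that for any pair $(x_1,x_2)$ the iterated approximations $(D_{f^{-n}(x_2)}f^n|_{E^c_f}) \circ A_0 \circ (D_{f^{-n}(x_1)}f^n|_{E^c_f})^{-1}$ (obtained by pulling back an arbitrary $A_0$ along $\Phi^{-n}$) converge uniformly to $A_\infty(x_1,x_2)$ by the fiber contraction. Combining this convergence with the telescoping identity $H_{f,x_1,x_2}^s = f^{-n} \circ H_{f,f^n(x_1),f^n(x_2)}^s \circ f^n$ together with the shrinking of the relevant stable balls, one obtains that the difference quotient of $H_{f,x_1,x_2}^s$ at $x_1$ is asymptotic to $A_\infty(x_1,x_2)$; this promotes $A_\infty$ from a formal invariant section to the genuine derivative $D H_{f,x_1,x_2}^s$. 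Uniform continuity of the derivatives is then immediate, since $A_\infty$ is a continuous section over a relatively compact piece of $B$. The symmetric argument with $f$ replaced by $f^{-1}$ handles the unstable case.

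The main obstacle is the identification step: bridging the gap between the abstract invariant section produced by the section theorem and the actual differential of the holonomy. The subtlety is that one cannot differentiate the invariance equation $f \circ H^s = H^s \circ f$ before knowing $H^s$ is differentiable, so the argument must be run ``at the $C^0$ level'' for candidate linearizations and then bootstrapped. This is precisely where the careful quantitative comparison between $H_{f,x_1,x_2}^s(z) - x_2$ and $A_\infty(x_1,x_2)(z-x_1)$ (in local coordinates) has to be carried out, exploiting the uniform Lipschitzness of $H^s$ together with the contraction of $\Phi$, and is the technical heart of the Pugh--Shub--Wilkinson argument.
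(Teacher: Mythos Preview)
The paper does not prove this theorem; it is stated with citations to \cite{HPS} and Theorem~B of \cite{PSW1}, followed only by the remark that those authors show the strong stable/unstable foliation is $C^1$ when restricted to a center-stable/center-unstable leaf. So there is no proof in the paper to compare your argument against.

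Your sketch has the right architecture but a genuine gap in the appeal to the $C^r$-section theorem. The fiber map $\Phi$ you define on $L(E^c_f(x_1),E^c_f(x_2))$ is linear with norm bounded by $(\gamma\hat\gamma)^{-1}$; since partial hyperbolicity forces $\gamma<\hat\gamma^{-1}$, one has $\gamma\hat\gamma<1$ and hence $(\gamma\hat\gamma)^{-1}>1$, so $\Phi$ is \emph{not} a fiber contraction. The HPS section theorem needs honest fiber contraction to produce a continuous invariant section; the ``fiber-over-base'' inequality you invoke is the hypothesis that upgrades an already-existing section by one derivative, not the one that creates it. The argument in \cite{PSW1} avoids this by working with the Grassmannian bundle of $d_s$-planes in $T\cWcs_f$: there the domination $\nu<\gamma$ gives a genuine fiber contraction toward $E^s_f$, and center bunching $\nu<\gamma\hat\gamma$ is precisely the extra inequality making that invariant section $C^1$ along $\cWcs_f$-leaves, whence the $C^1$ holonomy. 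Alternatively one can run a direct Cauchy estimate on the sequence $(Df^n|_{E^c_f(x_2)})^{-1}\circ P_n\circ Df^n|_{E^c_f(x_1)}$ with $P_n$ an identification of $E^c_f(f^n(x_1))$ and $E^c_f(f^n(x_2))$; center bunching plus Lipschitzness of $Df$ makes the increments summable. Note also that in your second paragraph the iteration should go \emph{forward} (so that $f^n(x_1),f^n(x_2)$ approach each other along the stable leaf), not backward as written.
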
  

Indeed, the authors prove that the strong stable/unstable foliation is $C^1$ when restricted to a center-stable/unstable leaf. However, from their proof, it is not clear how the holonomies $H_{f,x_1,x_2}^s|_{\cWc_{f,\mathrm{loc}}(x_1)}$, resp. $H_{f,x_1,x_2}^u|_{\cWc_{f,\mathrm{loc}}(x_1)}$ vary in the $C^1$-topology with the choices of the points $x_1$ and $x_2\in \mathcal{W}_{f,\mathrm{loc}}^s(x_1)$, resp. $x_2\in \mathcal{W}_{f,\mathrm{loc}}^u(x_1)$. This question is investigated in Obata's work \cite{O}, where it is shown that under some stronger bunching condition, these holonomy maps vary continuously with the choices of the base points $x_1,x_2$. 

\begin{defi}[see \cite{O}]\label{stron bunch}
	For any integer $r \geq 1$, we denote by $\mathcal{PH}_*^r(M)$ the set of all partially hyperbolic diffeomorphisms $f  \in \mathcal{PH}^r(M)$ such that, for some $\theta \in (0,1)$,  
	\begin{gather*}
	\|D_x f|_{E_f^s}\|^\theta < \frac{m(D_x f|_{E_f^c})}{\|D_x f|_{E_f^c}\|},\quad \frac{\|D_x f|_{E_f^c}\|}{m(D_x f|_{E_f^c})}<m(D_x f|_{E_f^u})^\theta,\\
	\|D_x f|_{E_f^s}\|<m(D_x f|_{E_f^c})m(D_x f|_{E_f^s})^\theta,\\ \|D_x f|_{E_f^c}\| \cdot\|D_x f|_{E_f^u}\|^\theta<m(D_x f|_{E_f^u}).
	\end{gather*}
\end{defi}
\noindent Note that any diffeomorphism $f\in \mathcal{PH}^r_*(M)$ is automatically center bunched. 

\begin{theorem}[Theorem 0.3 in \cite{O}]
Assume that $f  \in \mathcal{PH}^2_*(M)$. Then, for $*=s,u$, the family $\big\{H_{f,x_1,x_2}^*|_{\cWc_{f,\mathrm{loc}}(x_1)}\big\}_{x_1\in M,\, x_2\in \mathcal{W}_{f,\mathrm{loc}}^*(x_1)}$ is a family of $C^1$ maps depending continuously in the $C^1$-topology with the choices of the points $x_1$ and $x_2\in \mathcal{W}_{f,\mathrm{loc}}^*(x_1)$. 
\end{theorem}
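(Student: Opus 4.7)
\medskip

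\noindent\textbf{Proof proposal.} The plan is to promote the smoothness of the restricted holonomy maps given by Theorem \ref{cor smooth holonomy maps} to a continuous-in-base-point family in the $C^1$ topology, by realizing the derivatives of the holonomies as the unique invariant section of a fiber contraction over an appropriate base, exactly as in the $C^r$-section theorem of Hirsch--Pugh--Shub. I would carry this out only for $*=s$, as the case $*=u$ is obtained by reversing time.

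First, I would fix smooth local parametrizations of the center leaves $\cWc_{f,\mathrm{loc}}(x)$ depending continuously on $x\in M$ (for instance, via a smooth family of charts adapted to the continuous bundle $E_f^c$). In these coordinates the holonomy $H_{f,x_1,x_2}^s\big|_{\cWc_{f,\mathrm{loc}}(x_1)}$ becomes a map between fixed Euclidean neighborhoods, and the question reduces to showing continuous dependence of this map and of its derivative on the pair $(x_1,x_2)$ with $x_2\in\mathcal{W}_{f,\mathrm{loc}}^s(x_1)$. I would then use the standard invariance relation
\[
H_{f,x_1,x_2}^s = f^{-n}\circ H_{f,f^n(x_1),f^n(x_2)}^s\circ f^n,
\]
which, differentiated in the center direction, expresses the center-to-center derivative $D H_{f,x_1,x_2}^s$ as the image under the graph transform induced by $f$ of the derivative at $(f^n(x_1),f^n(x_2))$.

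Next, I would set up the $C^r$-section theorem. The base of the bundle is the set $\mathcal{B}:=\{(x_1,x_2):x_2\in\mathcal{W}_{f,\mathrm{loc}}^s(x_1)\}$, over which $f$ acts by $(x_1,x_2)\mapsto(f(x_1),f(x_2))$ with exponential contraction at rate at most $\nu$. The fiber over $(x_1,x_2)$ is the Banach ball of $C^1$ maps $\cWc_{f,\mathrm{loc}}(x_1)\to \cWc_{f,\mathrm{loc}}(x_2)$ satisfying a uniform bound on $\|D\varphi\|$ and a H\"older type bound on $D\varphi$; conjugation by $f$ on these fibers yields the graph transform $\mathcal{F}$. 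The crucial computation is that the Lipschitz constant of $\mathcal{F}$ on each fiber is controlled by the ratios
\[
\frac{\|Df|_{E^c}\|}{m(Df|_{E^c})},\qquad \frac{\|Df|_{E^s}\|}{m(Df|_{E^c})\,m(Df|_{E^s})^\theta},\qquad \frac{\|Df|_{E^s}\|^\theta\,\|Df|_{E^c}\|}{m(Df|_{E^c})},
\]
so the inequalities in Definition \ref{stron bunch} exactly guarantee that $\mathcal{F}$ is a fiber contraction whose fiber rate is strictly dominated by the base contraction rate $\nu$. By the fiber contraction theorem, $\mathcal{F}$ has a unique continuous invariant section, which must agree on each fiber with the germ $H_{f,x_1,x_2}^s\big|_{\cWc_{f,\mathrm{loc}}(x_1)}$.

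Finally, continuity of this invariant section in the $C^0$ topology on fibers translates, after unpacking the parametrizations, into the claimed continuous dependence of $H_{f,x_1,x_2}^s\big|_{\cWc_{f,\mathrm{loc}}(x_1)}$ on $(x_1,x_2)$ in the $C^1$ topology. The main obstacle is of a technical nature: one must set up the fiber spaces so that both the uniform $C^1$ bounds and the higher-order H\"older estimate needed to make the graph transform into a genuine contraction (and not merely a pseudo-contraction) survive the iteration, and check that the four inequalities in Definition \ref{stron bunch} are sufficient, and not just those of classical center bunching. This is precisely the calculation carried out by Obata; once it is in place, the continuity assertion follows from the abstract $C^r$-section theorem applied to $\mathcal{F}$.
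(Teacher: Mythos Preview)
The paper does not supply a proof of this statement: it is quoted verbatim as Theorem~0.3 of Obata~\cite{O} and used as a black box (see also Lemma~\ref{lemme Davi} and the remark following it). So there is no ``paper's own proof'' to compare against here; your proposal is really an outline of how one would reconstruct Obata's argument.

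As an outline your strategy is the right one and is indeed how results of this type are proved (Brown~\cite{B}, Burns--Wilkinson~\cite{BW2}, Obata~\cite{O}): encode the center-restricted stable holonomy and its derivative as the invariant section of a graph transform over the base $\{(x_1,x_2):x_2\in\mathcal{W}^s_{f,\mathrm{loc}}(x_1)\}$, and use the bunching inequalities to make the fiber map a contraction. Two points deserve care, though. First, for mere \emph{continuity} of the invariant section in the base variable you only need the fiber map to be a uniform contraction (rate $<1$); the comparison ``fiber rate strictly dominated by the base rate $\nu$'' that you invoke is the hypothesis for \emph{H\"older} or \emph{$C^1$} dependence of the section on the base, which is stronger than what is claimed. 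Second, and more substantively, the delicate part is not the abstract fiber-contraction step but identifying the correct fiber: one works not with the full $C^1$-ball of maps but with the space of candidate \emph{linear} holonomies (maps $E^c_f(x_1)\to E^c_f(x_2)$) together with a $\theta$-H\"older modulus along the stable leaf, and it is precisely the third and fourth inequalities in Definition~\ref{stron bunch} that force this H\"older cocycle to converge uniformly in $(x_1,x_2)$. Your sketch gestures at this (``a H\"older type bound on $D\varphi$'') but does not pin down which H\"older exponent, along which direction, or why the specific inequalities of Definition~\ref{stron bunch}---as opposed to ordinary center bunching---are needed; that bookkeeping is the actual content of Obata's note.
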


\subsection{Accessibility classes}
 
A $f$-\emph{accessibility sequence} is a sequence $[x_1,\dots,x_k]$ of $k \geq 1$ points in $M$ such that for any $i\in \{1,\dots,k-1\}$, the points $x_i$ and $x_{i+1}$ belong to the same stable or unstable leaf of $f$. In particular, the points $x_1$ and $x_k$ can be connected by some $f$-\emph{path}, i.e., a continuous path in $M$ obtained by concatenating  finitely  many  arcs in $\cWs_f$ or $\cWu_f$. We will refer to the points $x_1,\dots,x_k$ as the \emph{corners} of the accessibility sequence $[x_1,\dots,x_k]$. 

For any point $x \in M$, we denote by $\ac_f(x)$ the \emph{accessibility class} of $x$. By definition, it is the set of all points $y\in M$ which can be connected to $x$ by some $f$-path. We also let 
$$
\cac_f(x):=\mathrm{cc}(\ac_f(x)\cap \cWc_f(x,1),x)
$$ 
be the connected component containing $x$ of the intersection of the accessibility class of $x$ and the local center leaf through $x$. Similarly, for any $\varepsilon>0$, we let $\cac_f(x,\varepsilon):=\mathrm{cc}(\ac_f(x)\cap \cWc_f(x,\varepsilon),x)$. 
By definition, accessibility classes form a partition of $M$. We say that the diffeomorphism $f$ is \emph{accessible} if this partition is trivial, i.e., the whole manifold $M$ is a single accessibility class; we say that $f$ is \emph{stably accessible} if the  diffeomorphisms which are sufficiently $C^1$-close to $f$ are accessible.

Moreover, given any 
$f$-accessibility sequence $\gamma=[x_1,\dots,x_k]$, we let $H_{f,\gamma}\colon  \cW_{f,\mathrm{loc}}^c(x_1)\to \cW_{f,\mathrm{loc}}^c(x_k)$ be the holonomy map  obtained by concatenating the local holonomy maps along the arcs of $\gamma$, i.e.,
\begin{equation}\label{compo holonomies}
H_{f,\gamma}:=H_{f,x_{k-1},x_k}^{*_{k-1}}\circ \cdots \circ H_{f,x_{1},x_2}^{*_{1}},
\end{equation}
where for $j \in \{1,\dots,k-1\}$, $*_j\in \{s,u\}$ is such that $x_{j+1}\in \cW_f^{*_j}(x_j)$. 

The next lemma is elementary; it follows from the local product structure and the continuous dependence of the invariant foliations with respect to the diffeomorphism.
\begin{lemma}[Continuation of accessibility sequences]\label{lemma continuation}
Let	$\gamma=[x_0,x_1,\dots,x_k]$ be a $f$-accessibility sequence, for some integer $k \geq 0$. Then there exist a neigbourhood $\mathcal{O}$ of $x_0$ and a $C^1$-neighbourhood $\mathcal{U}$ of $f$  such that for any point $x \in \mathcal{O}$, and for any diffeomorphism $g \in \mathcal{U}$, there exists a natural continuation $\gamma^{x,g}=[x,x_1^{x,g},\dots,x_k^{x,g}]$ of $\gamma$ for $x$ and $g$. Indeed, the $g$-accessibility sequence $\gamma^{x,g}$ is defined as 
	\begin{align*}
	x_1^{x,g} &:= H_{g,x,x_1}^{*_0}(x);\\
	x_2^{x,g} &:= H_{g,x_1^{x,g},x_2}^{*_1}(x_1^{x,g});\\
	&\dots\\
	x_k^{x,g} &:= H_{g,x_{k-1}^{x,g},x_k}^{*_{k-1}}(x_{k-1}^{x,g});
	\end{align*}	
here, for each $j \in \{0,\dots,k-1\}$, we let $*_j 
\in \{s,u\}$ be such that $x_{j+1}\in \cW_f^{*_j}(x_{j})$. 
Moreover,   $\gamma^{x,g}$  depends continuously on the pair $(x,g)$. 
\end{lemma}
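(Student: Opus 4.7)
The plan is to proceed by induction on the length $k$ of the accessibility sequence. The base case $k=0$ is trivial: for any neighborhood $\mathcal{O}\ni x_0$ and any $C^1$-neighborhood $\mathcal{U}\ni f$, the sequence $\gamma^{x,g}=[x]$ depends continuously on $(x,g)$.

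For the inductive step, suppose the statement holds for sequences of length $k$, and let $\gamma=[x_0,\dots,x_{k+1}]$ with $x_{k+1}\in\mathcal{W}_f^{*_k}(x_k)$. After shrinking $\mathcal{O}$ and $\mathcal{U}$, the induction hypothesis provides continuous maps $(x,g)\mapsto x_i^{x,g}$ for $i=0,\dots,k$. It remains to verify that $H_{g,x_k^{x,g},x_{k+1}}^{*_k}$ is well-defined at $x_k^{x,g}$ and that its value varies continuously with $(x,g)$. Write $\hat c_k:=cu$ if $*_k=s$ and $\hat c_k:=cs$ if $*_k=u$, and fix a small plaque $P_{k+1}\subset \mathcal{W}_f^{\hat c_k}(x_{k+1})$ containing $x_{k+1}$ and transverse to $E_f^{*_k}$. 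By the local product structure of $f$, for any $z$ close to $x_k$ the leaf $\mathcal{W}_f^{*_k}(z)$ meets $P_{k+1}$ at a unique point depending continuously on $z$; this point is exactly $H_{f,x_k,x_{k+1}}^{*_k}(z)$.

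To pass from $f$ to $g\in\mathcal{U}$, the plan is to combine two standard facts. First, the strong foliations $\mathcal{W}_g^s,\mathcal{W}_g^u$ depend $C^0$-continuously on $g$ in the $C^1$-topology; and under the standing assumption of stable dynamical coherence, Theorem \ref{thmplaqueexpansivetostablydc} ensures that for $g$ close enough to $f$ the foliations $\mathcal{W}_g^{cs},\mathcal{W}_g^{cu}$ exist near each $x_i$ and are $C^0$-close to their $f$-counterparts (via the leaf conjugacy $\mathfrak{h}_g$). Second, transversality is an open condition. Combining the two, after further shrinking $\mathcal{U}$ we obtain a plaque $P_{k+1}^g\subset \mathcal{W}_g^{\hat c_k}$ close to $P_{k+1}$ and transverse to $\mathcal{W}_g^{*_k}(z)$ for all $z$ near $x_k$, so that their intersection $H_{g,z,x_{k+1}}^{*_k}(z)$ is a single point depending continuously on $(z,g)$. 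Setting $z=x_k^{x,g}$ and composing with the continuous dependence provided by induction, we obtain a continuous map $(x,g)\mapsto x_{k+1}^{x,g}$, completing the induction.

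The argument is essentially routine; the one technical step that cannot be bypassed is ensuring that every local holonomy $H_{g,z,x_{k+1}}^{*_k}$ remains well-defined after perturbation, which reduces to the $C^0$-continuous dependence of the center-stable and center-unstable foliations on $g$ in the $C^1$-topology. This is exactly what stable dynamical coherence and plaque expansiveness provide through Theorem \ref{thmplaqueexpansivetostablydc}, and no deeper tool is needed. Since only finitely many shrinkings of $\mathcal{O}$ and $\mathcal{U}$ are performed along the induction, the resulting neighborhoods at the final step are still open, as required.
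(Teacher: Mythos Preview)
Your argument is correct and matches the paper's own justification, which consists of a single sentence: the lemma ``follows from the local product structure and the continuous dependence of the invariant foliations with respect to the diffeomorphism.'' Your inductive proof is a faithful expansion of precisely these two ingredients.

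One small remark: you invoke Theorem~\ref{thmplaqueexpansivetostablydc} (plaque expansiveness) to guarantee that the center-stable and center-unstable plaques of $g$ exist and vary continuously. Strictly speaking, the lemma as stated does not list plaque expansiveness among its hypotheses, and the paper's one-line justification does not mention it either. This is harmless in context, since every application of the lemma in the paper takes place within the class $\mathscr{F}$ of plaque expansive diffeomorphisms; your appeal to Theorem~\ref{thmplaqueexpansivetostablydc} simply makes explicit what the paper leaves implicit. Note also that the leaf conjugacy $\mathfrak{h}_g$ in that theorem is stated for the \emph{center} foliation, whereas you need continuity of the $cs$/$cu$ plaques; this follows either by the analogous HPS argument for $E^{cs},E^{cu}$ or by observing that local $cs$/$cu$ plaques are obtained by saturating center plaques by local strong leaves, both of which vary continuously.
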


\begin{defi}\label{def 4 legged}
	Given a point $x \in M$ and an integer $n\geq 2$, a \emph{$2n$ us-loop} at $(f,x_0)$ is a $f$-accessibility sequence  $\gamma = [x_0,x_1,x_2,\dots,x_{2n}] \in M^{2n+1}$ with $2n$ legs  such that 
	\begin{align*}
	x_1 &\in \cWu_{f,\mathrm{loc}}(x_0),\\
	x_2 &\in \cWs_{f,\mathrm{loc}}(x_1),\dots\\
	\dots\, x_{2n-1} &\in \cWu_{f,\mathrm{loc}}(x_{2n-2})\cap \cWcs_{f,\mathrm{loc}}(x_0),\\
	x_{2n}&:=H_{f,x_{2n-1},x}^s(x_{2n-1})\in \cWc_{f,\mathrm{loc}}(x_0). 
	\end{align*}
	We define $2n$ su-loops accordingly (with $x_1\in \cWs_{f,\mathrm{loc}}(x_0)$ etc.). 
	
	The \emph{length} of a $2n$ us-loop $\gamma= [x_0,x_1,x_2,\dots,x_{2n}] \in M^{2n+1}$ is defined as 
	$$
	\ell(\gamma):=d_{\cWu_f}(x_0,x_1) + \sum_{i=1}^{n-1} \Big[ d_{\cWs_f}(x_{2i-1},x_{2i}) + d_{\cWu_f}(x_{2i},x_{2i+1})\Big] + d_{\cWcs_f}(x_{2n-1},x_0).
	$$
	Moreover, we say that the us-loop $\gamma$ is 
	\begin{itemize}
		\item \emph{closed}, if $x_{2n}=x_0$;
		\item \emph{trivial}, if $x_0=x_1=x_2=\dots=x_{2n}$;
		\item \emph{non-degenerate}, if  $x_1$ is distinct from the other corners $x_0,x_2,\dots,x_{2n}$. 
	\end{itemize}
	We also denote by $\overline{\gamma}$ the $2n$ su-loop $\overline{\gamma} := [x_{2n},x_{2n-1},\dots,x_2,x_1,x_0] \in M^{2n+1}$ at $(f,x_{2n})$. Finally, given an integer $m \geq 2$ and a $2m$ us-loop $\gamma'=[x_{2n},x_{1}',\dots,x_{2m}']$ at $(f,x_{2n})$,  the concatenation $\gamma\gamma'$ of $\gamma$ and $\gamma'$ is the $2(m+n)$ us-loop $\gamma\gamma':=[x_0,x_1,\dots,x_{2n},x_1',\dots,x_{2m}']$ at $(f,x_0)$.  
	

\end{defi}

\begin{defi}
	Given  $x \in M$ and $n \geq 2$, a one-parameter family $\gamma=\{\gamma(t) = [x,x_1(t), \dots,x_{2n}(t)]\}_{t\in [0,1]}$ of $2n$ us-loops at $(f,x)$ is said to be \emph{continuous} if for any $i=1,\dots,2n$, the map $t \mapsto x_i(t)$ is continuous. We define $\ell(\gamma) := \sup_{t \in [0,1]} \ell(\gamma(t))$. 
\end{defi}

\subsection{Structure of center accessibility classes}

%

Let $M$ be a  compact Riemannian manifold   of dimension $d \geq 4$. Let $r \geq 2$ be some integer, and let $f \in \cPH^r(M)$ be a  partially hyperbolic diffeomorphism  with $\dim E_f^c=2$ that is center bunched and dynamically coherent. 

By Theorem \ref{cor smooth holonomy maps}, for $*=s,u$, the $*$-holonomy maps are $C^1$ when restricted to a $\cW_f^{c*}$ leaf; by $C^1$-homogeneity arguments,  this allowed  \cite{RH,RHV} to obtain a classification of center accessibility classes. 

\begin{theorem}[\cite{RH,RHV}]\label{theorem structure}
For any  point $x \in M$, and for any sufficiently small $\varepsilon>0$, the local center accessibility class $\cac_f(x,\varepsilon)$ can be either
\begin{itemize}
	\item  trivial, i.e., reduced to a point;
	\item  a one-dimensional submanifold of $\cW_f^c(x)$;
	\item open; in this case, $\ac_f(x)$ is also open. 
\end{itemize}
\end{theorem}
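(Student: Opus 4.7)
The strategy is to exploit the $C^1$ regularity of holonomies along center leaves (Theorem \ref{cor smooth holonomy maps}) to realize $\cac_f(x,\varepsilon)$ as an orbit of a pseudogroup of $C^1$ local diffeomorphisms of the two-dimensional disk $\cWc_f(x,\varepsilon)$, and then to invoke a classification theorem for $C^1$-homogeneous closed subsets of a surface.

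First I would check that if $\gamma=[x,x_1,\dots,x_k]$ is a sufficiently short $f$-accessibility sequence with $x_k\in \cWc_f(x,\varepsilon)$, then the composition \eqref{compo holonomies} defines, by Theorem \ref{cor smooth holonomy maps}, a $C^1$ diffeomorphism $H_{f,\gamma}$ from a neighbourhood of $x$ in $\cWc_f(x)$ onto a neighbourhood of $x_k$ in $\cWc_f(x_k)=\cWc_f(x)$, because each factor is $C^1$ when restricted to the relevant center-stable or center-unstable leaf. The resulting collection of germs generates a pseudogroup $\Gamma_x$ of $C^1$ local diffeomorphisms of $\cWc_f(x,\varepsilon)$, and by definition $\cac_f(x,\varepsilon)$ is precisely the connected component through $x$ of the $\Gamma_x$-orbit of $x$ inside this disk. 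In particular, $\cac_f(x,\varepsilon)$ is a closed, connected, $C^1$-homogeneous subset of the two-dimensional surface $\cWc_f(x,\varepsilon)$: any two of its points $y,z$ are related by a germ of $C^1$ diffeomorphism of $\cWc_f(x)$ sending $y$ to $z$ and $\cac_f(x,\varepsilon)$ into itself.

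Second, I would apply the classification of such sets carried out in \cite{RH,RHV}, which proceeds by examining the Bouligand tangent cone $T_y\cac_f(x,\varepsilon)$. By $C^1$-homogeneity the linear type of this cone is independent of $y$, and three situations may occur: if the cone is reduced to $\{0\}$ at every point, then $\cac_f(x,\varepsilon)$ is discrete and, being connected, reduces to $\{x\}$; if it is contained in a linear one-dimensional subspace at every point, a Whitney-type extension argument combined with $C^1$-homogeneity produces a $C^1$ foliation chart and shows that $\cac_f(x,\varepsilon)$ is a $C^1$ one-dimensional submanifold of $\cWc_f(x)$; and if the tangent cone is two-dimensional at some (hence every) point, then the $C^1$ implicit function theorem forces $\cac_f(x,\varepsilon)$ to contain an open neighbourhood of $y$ in $\cWc_f(x)$, so it has nonempty interior.

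The last step is to upgrade this local openness to openness of the global class. If $\cac_f(x,\varepsilon)$ contains a nonempty open subset $U$ of $\cWc_f(x)$, then by saturating $U$ with short local stable and unstable leaves and using the local product structure, one obtains an open neighbourhood of $x$ in $M$ consisting entirely of points $f$-path-equivalent to $x$, so that $\ac_f(x)$ is open. The main obstacle will be the second step, and specifically the Whitney-type argument that turns $C^1$-homogeneity plus a one-dimensional tangent cone into a genuine $C^1$ submanifold structure; this is where the $C^1$ regularity of the holonomies along center leaves from Theorem \ref{cor smooth holonomy maps} is used in an essential way, while the discreteness and open cases follow from softer tangent-cone arguments.
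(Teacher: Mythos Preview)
The paper does not give its own proof of this statement: Theorem \ref{theorem structure} is quoted directly from \cite{RH,RHV} and used as a black box throughout. So there is no ``paper's proof'' to compare against; your proposal is really a sketch of the argument in those references.

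That said, your outline is essentially the strategy of \cite{RH,RHV}: one uses the $C^1$ regularity of $su$-holonomies restricted to center leaves (Theorem \ref{cor smooth holonomy maps}) to see that $\cac_f(x,\varepsilon)$ is $C^1$-homogeneous, and then applies a classification of $C^1$-homogeneous subsets of a surface---indeed the paper's bibliography includes the Repov\v{s}--Skopenkov--\v{S}\v{c}epin result \cite{RSS} for precisely this purpose. Your trichotomy via the Bouligand tangent cone is the right organizing principle.

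One point to tighten: you assert that $\cac_f(x,\varepsilon)$ is \emph{closed} in $\cWc_f(x,\varepsilon)$, but this is not obvious from the definition (a connected component of $\ac_f(x)\cap\cWc_f(x,\varepsilon)$), and the result of \cite{RSS} is stated for compacta. In \cite{RH,RHV} this is handled by working with the local picture more carefully---one shows that the $C^1$-homogeneity persists on the closure, or equivalently that near any accumulation point of $\cac_f(x,\varepsilon)$ the same holonomy pseudogroup acts, so the closure is still $C^1$-homogeneous and the classification applies to it; one then checks a posteriori that $\cac_f(x,\varepsilon)$ coincides with its closure in each of the three cases. You should make this step explicit rather than asserting closedness at the outset.
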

In the following,   for any subset $\mathscr{S}\subset M$, we let 
\begin{itemize}
	\item $\Gamma_f^0(\mathscr{S}):=\{x \in \mathscr{S}: \cac_f(x)\text{ is trivial}\}$;
	\item $\Gamma_f^1(\mathscr{S}):=\{x \in \mathscr{S}: \cac_f(x)\text{ is one-dimensional}\}$;
	\item $\Gamma_f(\mathscr{S}):=\Gamma_f^0(\mathscr{S}) \cup \Gamma_f^1(\mathscr{S})$. 
\end{itemize}   
In particular, $\mathscr{S}\setminus \Gamma_f(\mathscr{S})$ is the set of points $x \in \mathscr{S}$ whose accessibility class $\ac_f(x)$ is open. 
When $\mathscr{S}=M$, we abbreviate $\Gamma_f^0(\mathscr{S}),\Gamma_f^1(\mathscr{S}),\Gamma_f(\mathscr{S})$ respectively as $\Gamma_f^0,\Gamma_f^1,\Gamma_f$. 

\section{Main results}\label{section main resul}

We fix a compact Riemannian manifold $M$ of dimension $d \geq 4$ and an integer $r \geq 2$. 
Our main result is about the $C^r$-density  of the accessibility property for partially hyperbolic diffeomorphisms with  two-dimensional center which are stably dynamically coherent and satisfy some strong bunching condition as in Definition \ref{stron bunch}, i.e., $f \in   \cPH_*^r(M)$. 

\begin{theoalph}\label{premier theo}
	For any partially hyperbolic diffeomorphism $f \in \cPH_*^r(M)$, resp. $f \in \cPH_*^r(M,\mathrm{Vol})$, with $\dim E_f^c=2$, that is dynamically coherent and plaque expansive, and for any $\delta>0$, there exists a partially hyperbolic diffeomorphism $g \in \cPH^r(M)$, resp. $g \in \cPH^r(M,\mathrm{Vol})$, with $d_{C^r}(f,g)< \delta$, such that $g$ is stably accessible. 
	
	In particular, by the work of Burns-Wilkinson \cite{BW1}, this implies that for any partially hyperbolic diffeomorphism $f \in \cPH_*^r(M,\mathrm{Vol})$, with $\dim E_f^c=2$, that is  dynamically coherent and plaque expansive, and for any $\delta>0$, there exists $g \in \cPH^r(M,\mathrm{Vol})$, with $d_{C^r}(f,g)< \delta$, such that $g$ is stably ergodic.   
\end{theoalph}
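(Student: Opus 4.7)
The plan is to derive the theorem from a single local perturbation statement: for any $f\in\cPH_*^r(M)$ with $\dim E_f^c=2$, dynamically coherent and plaque expansive, any $x\in\Gamma_f$, and any $\eta>0$, there exists $g\in\cPH_*^r(M)$ with $d_{C^r}(f,g)<\eta$ such that $\ac_g(x)$ is open. Once this is available, the global conclusion follows from a Baire-type argument. Indeed, by Theorem \ref{theorem structure} and the connectedness of $M$, $g$ is accessible if and only if $\Gamma_g=\emptyset$. Moreover, by Lemma \ref{lemma continuation} and Theorem \ref{thmplaqueexpansivetostablydc}, if $\ac_g(y)$ is open for some $y\in M$, then $\ac_{g'}(y')$ is open for all $g'$ sufficiently $C^1$-close to $g$ and all $y'$ near $y$; so for any dense sequence $\{x_n\}\subset M$, the set
\begin{equation*}
\mathcal{U}_n:=\{g\in\cPH_*^r(M)\ :\ \exists\ y\in B(x_n,1/n),\ \ac_g(y)\ \text{is open}\}
\end{equation*}
is $C^1$-open. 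If the local perturbation statement holds, each $\mathcal{U}_n$ is also $C^r$-dense in a $C^r$-neighbourhood of $f$; a Baire intersection inside a small $C^r$-ball around $f$ produces $g\in\bigcap_n\mathcal{U}_n$ whose points of open accessibility class form a dense open subset of $M$, and a compactness argument then forces $\Gamma_g=\emptyset$, so $g$ is stably accessible. The stable-ergodicity corollary in the volume-preserving case follows from Burns--Wilkinson \cite{BW1}.

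The heart of the argument is the local perturbation. Given $x\in\Gamma_f$, the $C^1$-homogeneity arguments underlying Theorem \ref{theorem structure}, combined with the $C^1$-continuous dependence of holonomies along center leaves provided by Obata's theorem (which requires the strong bunching of Definition \ref{stron bunch}), imply that the derivatives at $x$ of all us-loop holonomies $H_{f,\gamma}$ from \eqref{compo holonomies} preserve a common one-dimensional subspace $L\subset E_f^c(x)$. The strategy is to produce a $C^r$-small perturbation $g$ whose continuation of some carefully chosen us-loop has a derivative at $x$ that does not preserve $L$; a continuity argument then promotes $\cac_g(x)$ to be two-dimensional, and hence $\ac_g(x)$ to be open. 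Concretely, I pick a non-degenerate $4$ us-loop $\gamma=[x,x_1,x_2,x_3,x_4]$ with $x_1$ in a free region where a bump can be inserted without affecting the other corners or a prescribed finite collection of their $f$-iterates, and consider a one-parameter family $\{g_t\}$ of such bumps. The derivative $DH_{g_t,\gamma^{x,g_t}}(x)$ depends smoothly on $t$, and a generic direction of bump makes $\tfrac{d}{dt}\big|_{t=0}DH_{g_t,\gamma^{x,g_t}}(x)$ transverse to the stabilizer of $L$ in $\mathrm{GL}(E_f^c(x))$. The volume-preserving case is handled by adjusting the bumps via a Moser-type procedure; the case $x\in\Gamma_f^1$ differs from $x\in\Gamma_f^0$ only in that the new twist must be propagated along the whole curve $\cac_f(x)$, which is again a consequence of the $C^1$-continuous dependence of holonomies on base points.

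The main obstacle is producing a genuinely two-dimensional first-order deformation of a us-loop holonomy using only a $C^r$-small bump with $r\geq 2$, since earlier density results in the two-dimensional center setting, e.g.\ \cite{DW,ACW2,AV}, either work in the $C^1$-topology or rely on a fibered structure. The strong bunching of Definition \ref{stron bunch} is the decisive ingredient allowing a $C^r$ argument: by Obata's theorem it guarantees that each local stable or unstable holonomy map between center leaves is $C^1$ and depends continuously in the $C^1$-topology on the base points, so one can track the cumulative effect of a single bump through the composition of four holonomies and compare the perturbed derivative at $x$ with the unperturbed one at first order. Verifying that the resulting first-order variation is indeed nonzero in directions transverse to the stabilizer of $L$ is the technical crux of the argument, and I expect it to require a careful combinatorial choice of the us-loop to avoid degenerate cancellations between the bump's effect and the holonomies traversing the corners $x_2,x_3,x_4$.
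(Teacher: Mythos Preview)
Your local perturbation idea is in the right spirit, but the globalization via Baire has a genuine gap. Membership in $\bigcap_n\mathcal{U}_n$ only says that points with open accessibility class are dense in $M$; since that set is automatically open, $\Gamma_g$ is then closed and nowhere dense, but there is no ``compactness argument'' that forces $\Gamma_g=\emptyset$. A priori $\Gamma_g$ can persist as a nowhere dense closed lamination by one-dimensional center accessibility classes (cf.\ Proposition \ref{prop lamination}). The paper avoids this entirely by proving a much stronger local statement (Proposition \ref{prop open global}): after a $C^r$-small perturbation supported near $x_0$, \emph{every} point of a fixed small center disk around $x_0$ has open class, stably under further $C^1$-small perturbations. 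One then covers a finite $(\tfrac{1}{40},4)$-spanning $c$-family of disks (Proposition \ref{prop acc mod den dis}) by finitely many such perturbations with disjoint supports, and the spanning property transfers openness of classes to all of $M$. Upgrading from ``open class at one point'' to ``open class on a whole disk, robustly'' is not a cosmetic matter: it requires a codimension argument with two independent us-loops to rule out trivial classes throughout the disk simultaneously (Proposition \ref{prop break zero global}), and separately Proposition \ref{lemma var des classes}, which shows that one-dimensional classes for nearby diffeomorphisms stay inside a fixed cone in the center leaf, so that a submersion from parameter space to the center leaf (Corollary \ref{coro un peu chiant}, via Proposition \ref{determinant for smooth deformations}) can push loop endpoints outside that cone uniformly over the disk.

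Two further issues. First, your local statement is asserted for arbitrary $x\in\Gamma_f$, but the bump construction needs arbitrarily long first-return times to its support, so periodic $x$ cannot be treated this way; the paper sidesteps this by placing the spanning disks away from periodic orbits. Second, your proposed mechanism---breaking an invariant line $L\subset E_f^c(x)$ for the derivatives $DH_{f,\gamma}(x)$---is not well posed when $x\in\Gamma_f^0$: there every short us-loop holonomy is the identity on a neighbourhood of $x$ in $\cWc_f(x)$, so every line is preserved and there is no distinguished $L$ to destroy. The paper instead tracks the loop \emph{endpoint} $\psi(b,x,1)$ as a function of the perturbation parameter $b$ and shows that $b\mapsto\psi(b,x,1)$ is a submersion onto the local center leaf; this is what drives both the breaking of trivial classes and, combined with the cone containment of Proposition \ref{lemma var des classes}, the opening of one-dimensional ones.
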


One intermediate step is to show that trivial accessibility classes can be broken by $C^r$-small perturbations. This part of the proof also holds when the center is higher dimensional and only requires center bunching. 

\begin{theoalph}\label{deuxieme theo}
	For any partially hyperbolic diffeomorphism $f \in \cPH^r(M)$, resp. $f \in \cPH^r(M,\mathrm{Vol})$, with $\dim E_f^c \geq 2$, that is center bunched, dynamically coherent, and plaque expansive, and for any $\delta>0$, there exists a partially hyperbolic diffeomorphism $g \in \cPH^r(M)$, resp. $g \in \cPH^r(M,\mathrm{Vol})$, with $d_{C^r}(f,g)< \delta$, such that   $\cac_g(x)$ is non-trivial, for all $x \in M$. 
\end{theoalph}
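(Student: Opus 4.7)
The plan is to prove the theorem by a finite sequence of $C^r$-small localized perturbations, each destroying the trivial center accessibility classes inside one element of a finite open cover of $\Gamma_f^0$, without reintroducing them in regions already ``cleaned''.

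First I would verify that $\Gamma_f^0$ is closed: by Theorem~\ref{theorem structure}, a local center accessibility class is either trivial, a $1$-dimensional $C^1$-submanifold of $\cWc_f(x)$, or open; the last case is locally stable, while a $1$-dimensional local class through $x$ is witnessed by a continuous nontrivial arc of accessibility sequences that persists at nearby points by Lemma~\ref{lemma continuation}, so neither alternative is a Hausdorff limit of trivial classes. If $\Gamma_f^0=\emptyset$ take $g=f$; otherwise cover $\Gamma_f^0$ by finitely many small open sets $V_1,\dots,V_N$ contained in joint trivializing charts for the strong foliations. The theorem then reduces to the following \emph{local statement}: for every $x_0\in M$ and every $\eta>0$ there exist $g\in\cPH^r(M)$ with $d_{C^r}(f,g)<\eta$, coinciding with $f$ outside an arbitrarily small neighborhood of $x_0$, and a neighborhood $W\ni x_0$, such that $W\cap\Gamma_g^0=\emptyset$.

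For the local statement I would probe triviality using $4$-legged us-loops. Since $\dim E_f^c\geq 2$ and $\dim E_f^{cs}+\dim E_f^u=\dim M$, at each $x$ close to $x_0$ there is a nondegenerate $4$-legged us-loop $\gamma(x)=[x,y(x),z(x),w(x),v(x)]$ of uniformly bounded length, depending continuously on $x$, with $w(x)=\cWu_{f,\mathrm{loc}}(z(x))\cap\cWcs_{f,\mathrm{loc}}(x)$ and $v(x)=H^s_{f,w(x),x}(w(x))\in\cWc_f(x)$; the assumption $x_0\in\Gamma_f^0$ forces $v(x_0)=x_0$. I pick a small ball $B$ traversed by the interior of the unstable arc from $z(x_0)$ to $w(x_0)$, disjoint from the other three legs of $\gamma(x_0)$ and from a neighborhood of $x_0$, and set $g:=\phi\circ f$ for a $C^r$-small $\phi$ supported in $B$ that translates points of $B$ by a small vector transverse to $E_f^u$. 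Because $\phi=\mathrm{Id}$ near $x_0,y(x_0),z(x_0)$, the first three legs of the continuation $\gamma^{x,g}$ follow the same $f$-arcs up to $z(x)$, but $\cWu_g$ is deformed in $B$, so the intersection with $\cWcs_g(x)$ moves to $w^g(x)\neq w(x)$. By Theorem~\ref{cor smooth holonomy maps} the stable holonomy $H^s_{g,w^g(x),x}$ is $C^1$ on its center-stable leaf, hence $v^g(x)=H^s_{g,w^g(x),x}(w^g(x))$ is a $C^1$-image of $w^g(x)$; choosing the translation direction appropriately ensures $v^g(x_0)\neq x_0$, and by Lemma~\ref{lemma continuation} the continuous map $x\mapsto v^g(x)$ remains $\neq x$ on a neighborhood $W$ of $x_0$, so $W\cap\Gamma_g^0=\emptyset$. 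In the volume-preserving case I take $\phi$ to be a time-$\tau$ map of a $C^{r-1}$-small divergence-free vector field supported in $B$.

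Finally, the property ``$v^g(x)\neq x$ on $W$'' is witnessed by concrete accessibility sequences with nonzero displacement and is therefore $C^1$-open in $g$ via Lemma~\ref{lemma continuation}. Setting $f_0:=f$, I inductively pick $x_k\in\Gamma_{f_{k-1}}^0\cap V_k$ (when nonempty) and apply the local statement around $x_k$ with $\eta_k$ so small that the witnesses already built on $W_1\cup\cdots\cup W_{k-1}$ remain valid; after $N$ steps I obtain $g=f_N$ with $d_{C^r}(f,g)<\delta$ and $\Gamma_g^0=\emptyset$. The main obstacle lies in the genericity step of the local argument: showing that the translation direction of $\phi$ can be chosen so that $v^g(x_0)\neq x_0$. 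This amounts to computing the first-order effect of $\phi$ on the composed holonomy $z(x_0)\mapsto w^g(x_0)\mapsto v^g(x_0)$ and verifying that the resulting infinitesimal map from admissible translation directions in $B$ to $T_{x_0}\cWc_f(x_0)$ is nonzero; center bunching and Theorem~\ref{cor smooth holonomy maps} are essential here, as they supply the $C^1$-regularity of the relevant holonomies without which the linearization would not even be defined.
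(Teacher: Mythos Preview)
Your local statement is essentially Proposition~\ref{prop break zero}: one $4$-legged loop and a localized perturbation make $\cac_g(x_0)$ nontrivial, hence nontrivial on some neighborhood $W$ of $x_0$ by continuity. The gap is in the globalization. The radius of $W$ is controlled only by the displacement $|v^g(x_0)-x_0|$, which is of order $\eta$; so at step $k$ you clear a ball $W_k$ of radius comparable to $\eta_k$, not the set $V_k$ from your fixed cover. After $N$ steps you only know $\Gamma_{f_N}^0\cap\big(\bigcup_k W_k\big)=\emptyset$, and there is no reason for $\bigcup_k W_k$ to contain $\Gamma_{f_N}^0$. Worse, your perturbation at step $k$ acts on the \emph{same} loop $\gamma(x)$ for every $x$ near $x_k$, so at points $x\in V_k$ where $v^{f_{k-1}}(x)$ was already nonzero but of size $O(\eta_k)$ it may be pushed back to zero; the step can create new trivial classes inside $V_k\setminus W_k$. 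You cannot fix this by refining the cover: making the $V_k$ of radius $\sim\eta$ forces $N\sim\eta^{-d}$, and then $\sum\eta_k$ is no longer small.

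This is exactly the difficulty the paper isolates in Section~\ref{section main resul} and overcomes in Proposition~\ref{prop break zero global} by a transversality argument rather than a continuity one. Taking \emph{two} independent loops $\gamma^x(\tfrac12),\gamma^x(1)$ at each $x$ and a $2k_0$-parameter deformation with $k_0=2$, the map $\Psi(b,x)=\big(\Pi^c_x(x),\Phi^{(1)}(b,x),\Phi^{(2)}(b,x)\big)$ is a submersion in $b$ and hence uniformly transverse to the diagonal $\Sigma_0=\{(z,z,z)\}$; then $\Psi^{-1}(\Sigma_0)$ has codimension $2\dim E_f^c$ in a space of dimension $3\dim E_f^c$, so its projection to parameter space has positive codimension. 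A single generic $b$ therefore clears an entire center disk of radius $\tilde K_1\sigma$ \emph{independent of} $\delta$, and this is what lets the finite induction in Corollary~\ref{prop break zero global bis} terminate. With a single loop the analogous bad set projects onto a full-measure subset of parameter space, which is why your scheme cannot be repaired without adding the second loop.

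A smaller point: the ``main obstacle'' you flag at the end---that the infinitesimal map from perturbation directions to $T_{x_0}\cWc_f(x_0)$ is nonzero---is genuinely nontrivial. It requires controlling how $\cWu_{f_b}$ deforms with $b$ through the support of the perturbation, and this is precisely Proposition~\ref{determinant for smooth deformations} (imported from \cite{LZ}), proved via the suspension $T(\hat f)$ of Lemma~\ref{lemma T deformation}; center bunching alone, without this machinery, does not give you $C^1$-dependence of the endpoint on $b$.
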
\quad
%
 Let us briefly summarize the main steps of the proof:
\begin{enumerate}
	\item\label{descr point un} we study the structure of local center accessibility classes, i.e., the set of points which can be attained within some small center disk around a given point,  following accessibility sequences with a given number of legs of prescribed size; in particular, we identify which are the configurations to break in order to make each accessibility class open;
	\item\label{descr point deux} given a small center disk $\mathcal{D}$, we construct continuous families of local accessibility sequences at points in $\mathcal{D}$; these families depend on the nature of the center accessibility class of the base point (which can be zero, one or two-dimensonal), and allow us to have sufficiently many ``degrees of freedom'' to create local accessibility after perturbation;
	\item\label{descr point trois} once these families are constructed, we design families of perturbations,  localized near one of the corners of the accessibility sequences, and which depend in a differentiable way on the perturbation parameter;
	\item\label{descr point quatre} we study the variation of the endpoint of these accessibility sequences once the perturbation parameter is turned on, and show that for suitable perturbations, we obtain a submersion from the space of perturbations to the phase space; in particular, bad configurations in phase space (non-open accessibility classes) correspond to special configurations in the space of perturbations, which can be broken to create local accessibility;
	\item\label{descr point cinq} we globalize the argument using spanning families.
\end{enumerate}

Let us say a few more words about the previous points. 

The details about point \eqref{descr point un} are given in Section \ref{section one dim}. For partially hyperbolic diffeomorphisms with two-dimensional center that are center bunched, it is known (by the works of Rodriguez-Hertz \cite{RH}, Rodriguez-Hertz and V\'asquez \cite{RHV} etc.) that center accessibility classes are zero, one or two-dimensional submanifolds. Moreover, Horita-Sambarino \cite{HS} have studied the organization of center accessibility classes within a small center disk all of whose points have non-trivial center accessibility classes; in particular, they have shown that the set of one-dimensional center accessibility classes of points in the disk forms a $C^1$ lamination. In Section \ref{section one dim}, we go further in this direction, and investigate the variation of center accessibility classes for perturbations of a given partially hyperbolic diffeomorphism. In particular, we show that if the center accessibility class of a point $x$ remains one-dimensional after perturbation, it stays in a certain ``cone'' around $x$. 

The construction of loops mentioned in point \eqref{descr point deux} is outlined in Section \ref{section adapt loop}. Indeed, in the subsequent argument, given a point $x$ whose  accessibility class is not open, we need to construct (non-trivial) \emph{closed} accessibility sequences at $x$; moreover, we show that it is possible to construct these loops in such a way that they depend nicely on $x$. 

The details about point \eqref{descr point trois} are in Section \ref{section sumb}, and follow the arguments of \cite{LZ}. Given a point $x \in M$ that is non-periodic, we construct a family $\{\gamma(t)\}_{t \in [0,1]}$ of contractible \emph{us-loops} at $(f,x)$, and we define a family of perturbations such that the support of the perturbations is contained in some small neighbourhood of the first corner of $\gamma(1)$. By taking the loop   sufficiently small, the first return time to the support of the perturbation  can be made arbitrarily large, and we show that it induces a change of the holonomy along the continuation of $\gamma(1)$ for the perturbed diffeomorphisms. More precisely, by the results of \cite{LZ}, we get a submersion from the space of perturbations to the phase space -- here, the local center leaf of $x$. 

The submersion property is sufficient to show that after perturbation, the center accessibility class of $x$ can be made non-trivial. This part of the proof is explained in Subsection \ref{break trivial} and holds in a more general setting, as it does not require the center to be two-dimensional. When the center accessibility class of  $x$ is one-dimensional, by point \eqref{descr point un}, it varies continuously with respect to the diffeomorphism in the $C^1$ topology. In particular, if the center accessibility class of $x$ were one-dimensional for every diffeomorphism in a $C^r$-neighbourhood of $f$, then all those classes would stay in some cone around the point $x$; but this is in contradiction with the submersion property for the  family of perturbations we construct. The details of this part are given in Subsection \ref{opening one dim}. 

The details about point \eqref{descr point cinq} are given in Section \ref{setion dens}, where we explain how to globalize  the arguments in order to verify the accessibility property, through the notion of spanning family of center disks, as in \cite{DW} (see Subsection \ref{spanning famm}). In Subsection \ref{break trivial global}, given some small center disk in the family, we explain how by a $C^r$-small perturbation, it is possible to make the center accessibility class of each point in the disk non-trivial. One difficulty is that the perturbation used to break trivial center accessibility classes may create new trivial classes in other places (at points with non-trivial, but very small center accessibility classes). The idea to bypass this difficulty is to take two families of us-loops which we can perturb ``independently'', in order to increase the codimension of ``bad'' situations for which the center accessibility class of some point in the disk would be trivial. Once all classes in the disk are non-trivial, we have to make a further perturbation to make all these classes simultaneously open. One important step in the argument is the aforementioned result (inspired by the work of Horita-Sambarino \cite{HS}) that within the center disk, one-dimensional center accessibility classes vary $C^1$-continuously both in perturbation space and phase  space. In particular, if the center disk is chosen sufficiently small, then the set of tangent directions associated to one-dimensional classes (even for small perturbations of the diffeomorphism $f$) stay in a small cone that is uniform in the points of the disk. Thanks to the submersion property, we can then choose a perturbation for which each point $x$ in the center disk will have a point $y$ in its center accessibility class lying outside this cone, which forces the accessibility class of $x$ to be open. There again, one difficulty is to check that the perturbations which we make preserve the accessibility classes which were already open. Repeating the same argument for each center disk in the spanning family, we thus construct a $C^r$-small perturbation of $f$ that is accessible. 

\section{Variation of one-dimensional center accessibility classes}\label{section one dim}

In this section, given an integer $r\geq 2$, we prove that the set of one-dimensional center accessibility classes varies continuously in the $C^{1}$ topology with respect to $f\in \mathcal{PH}_*^r(M)$. The idea of the proof is similar to Proposition 2.19 from \cite{HS} where it is proved that for a fixed partially hyperbolic diffeomorphism, and for a given center disk, the one-dimensional accessibility classes form a $C^{1}$-lamination. To prove this, we have to see that for a given $x\in M$, the direction $T_{x}\cac_{f}(x)$ varies continuously with respect to $f$ in the $C^{1}$ topology. 

Let us fix an integer $r\geq 2$. We denote by  $\mathscr{F}\subset \mathcal{PH}_*^r(M)$ the set of $C^r$ dynamically coherent, plaque expansive,  partially hyperbolic diffeomorphisms with two-dimensional center which satisfy the bunching condition in Definition \ref{stron bunch}. Let $f\in \mathscr{F}$. By center bunching, for $*=s,u$, for any  $x \in M$, $y \in \mathcal{W}_{f,\mathrm{loc}}^*(x)$,   the   holonomy map $H_{f,x,y}^*$  is $C^1$ when restricted to the  leaf $\mathcal{W}_{f,\mathrm{loc}}^{c*}(x)$. 
For any $C^1$ neighbourhood $\mathcal{U}$ of $f$, we will denote by $\mathcal{U}^\mathscr{F}$ the set  $\mathcal{U}^\mathscr{F}:=\mathcal{U} \cap \mathscr{F}$. 

In the following, we will need to have uniform control of the differential of the holonomies $H_{f,x,y}^*$ in two ways:
\begin{itemize}
	\item with respect to the points $x,y$ (in the same stable/unstable manifold);
	\item when the diffeomorphism $f$ is replaced with another $C^r$ partially hyperbolic diffeomorphism in a $C^1$-neighbourhood of $f$. 
\end{itemize}
This is the content of the next lemma. 
\begin{lemma}[See \cite{O}, and also \cite{B,BW2}]\label{lemme Davi}
	Let $f\in \mathscr{F}$. Then there exists a $C^1$ neighbourhood $\mathcal{U}$ of $f$ such that for $*=s,u$ and  $\mathcal{U}^\mathscr{F}=\mathcal{U} \cap \mathscr{F}$, the family of $C^1$ maps $\{H_{g,x,y}^*|_{\cW_g^{c}(x)}\}_{g\in \mathcal{U}^\mathscr{F},\, x \in M,\, y \in \mathcal{W}_f^*(x)}$  depends continuously in the $C^1$ topology with the choices of the points $x,y$ and of the map $g \in \mathcal{U}^\mathscr{F}$. 
\end{lemma}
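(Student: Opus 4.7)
The plan is to derive the lemma as a parametric version of Obata's theorem quoted just above, by running the standard graph-transform argument with the diffeomorphism $g$ (as well as the base points $x,y$) treated as additional parameters. Fix a $C^1$ neighbourhood $\mathcal{U}$ of $f$ small enough that every $g\in \mathcal{U}^{\mathscr{F}}$ satisfies the inequalities of Definition \ref{stron bunch} with a common $\theta'\in (\theta,1)$; such a $\mathcal{U}$ exists because the conditions defining $\mathcal{PH}^r_*(M)$ are open and $\mathscr{F}$ is defined by $C^1$-open conditions (dynamical coherence and plaque expansiveness are $C^1$-open on the plaque-expansive locus by Theorem \ref{thmplaqueexpansivetostablydc}). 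Using Theorem \ref{thmplaqueexpansivetostablydc}, identify, via the leaf conjugacy $\mathfrak{h}_g$ and the $C^0$-continuous dependence of the invariant bundles $E^{*}_g,E^{c}_g$, all the relevant objects of $g$ with a small $C^0$-perturbation of those of $f$, so that one can work on a single compact base.

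Next, I would set up, on the (compact) parameter space $\mathcal{P} := \{(g,x,y)\colon g\in \overline{\mathcal{U}^{\mathscr{F}}},\ x\in M,\ y\in \mathcal{W}^*_{g,\mathrm{loc}}(x)\}$, a Banach bundle whose fibre over $(g,x,y)$ is the space of continuous candidate differentials $L\colon T_x \mathcal{W}^{c}_g(x)\to T_y \mathcal{W}^{c}_g(y)$, endowed with the sup norm of the operator norm. The graph-transform $\Psi$ encoding the invariance equation
\[
DH^{*}_{g,f^{-n}(x),f^{-n}(y)}\big|_{\mathcal{W}^{c}_g} \;=\; (Df^n)^{-1}\circ DH^{*}_{g,x,y}\big|_{\mathcal{W}^{c}_g}\circ Df^n
\]
acts fibrewise and is a uniform contraction: the contraction rate is $\max\bigl(\|Df|_{E^s_g}\|/(m(Df|_{E^c_g})\cdot \|Df|_{E^c_g}\|^{\theta'}),\ldots\bigr)<1$ by the strong bunching inequalities, so by choosing $\mathcal{U}$ small enough one gets a contraction rate bounded below $1$ uniformly in $(g,x,y)\in\mathcal{P}$. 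The base-map of $\Psi$ (pre- and post-composition by $Df$, change of base point by one step of unstable/stable holonomy) depends continuously on the parameters because $x\mapsto Df(x)$, $(g,x,y)\mapsto D H^*_{g,\cdot,\cdot}|_{\mathcal{W}^c_g}$ along one dynamical step, and the identifications coming from $\mathfrak{h}_g$ are all continuous in $(g,x,y)$ in the $C^0$-operator topology.

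A standard argument (fibred contraction mapping principle, as in Hirsch--Pugh--Shub Chapter 5) then yields that the unique fixed section, which by Theorem \ref{cor smooth holonomy maps} coincides with the differential $DH^{*}_{g,x,y}|_{\mathcal{W}^c_g(x)}$, is continuous on $\mathcal{P}$. Combined with the classical $C^0$-continuity of the holonomy maps $H^{*}_{g,x,y}$ themselves (the strong foliations depend $C^0$-continuously on $g\in\mathcal{U}$, and their leafwise restrictions too), this gives the claimed continuous dependence in the $C^1$ topology of $H^{*}_{g,x,y}|_{\mathcal{W}^c_{g,\mathrm{loc}}(x)}$ with respect to $(g,x,y)\in \mathcal{U}^{\mathscr{F}}\times \{x\in M\} \times \{y\in \mathcal{W}^*_g(x)\}$.

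The main obstacle — and the reason a direct quotation of \cite{O} does not suffice — is the joint continuity in the parameter $g$: Obata's theorem is stated for a fixed $f$, so one must re-examine his contraction argument to check that the contraction rate, the ambient Banach bundle, and the base-map of the graph transform can all be controlled uniformly as $g$ varies in a $C^1$-neighbourhood. This is exactly where the openness of the strong bunching condition of Definition \ref{stron bunch} is crucial; it is what allows the contraction estimate to be uniform over $\mathcal{U}^{\mathscr{F}}$ and hence lets one extract $C^1$ continuity of the holonomies in all three variables simultaneously.
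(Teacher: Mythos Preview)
Your proposal is correct and follows essentially the same approach as the paper. The paper does not give a detailed proof of this lemma: it only provides a remark observing that Obata's estimates in \cite{O} are expressed in terms of the growth-rate functions in \eqref{functions nu gamma}, which depend continuously on $g$ in the $C^1$ topology, so that his argument goes through uniformly over a $C^1$-neighbourhood of $f$. Your fibred-contraction outline is a more explicit rendering of exactly this observation --- identifying the contraction rate with the strong bunching quantities and noting that the $C^1$-openness of Definition~\ref{stron bunch} makes the contraction uniform in $(g,x,y)$ --- so the two approaches coincide in substance.
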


\begin{remark}
In fact, Obata \cite{O} shows that for $*=s,u$, the family of holonomy maps $\{H_{f,x,y}^*|_{\cW_f^{c}(x)}\}_{x \in M,\, y \in \mathcal{W}_f^*(x)}$  depends continuously in the $C^1$ topology with the choices of the points $x,y$, when $f$ is dynamically coherent and satisfies a strong bunching condition. For our purpose, we also  need to have a uniform control with respect to the diffeomorphism $g$ in a $C^1$-small neighbourhood of $f$. It is indeed possible as the estimates in \cite{O} are  written in terms of the functions as in \eqref{functions nu gamma} controlling the growth rates along the different invariant bundles, which depend continuously on the map $g$ in the $C^1$ topology.  
\end{remark}

The holonomy map associated to some accessibility sequence is obtained by composing  the holonomy maps between  two consecutive corners (recall \eqref{compo holonomies}). By the previous lemma, we thus have:
\begin{corollary}\label{lemma dependence family}
	Let $f\in \mathscr{F}$, and let $\gamma=[x_1,x_2,\dots,x_k]\in M^{k}$ be a $f$-accessibility sequence for some integer $k \geq 1$. 
	We take a small neighbourhood $\mathcal{O}\subset M$ of $x_1$ and a $C^1$ neighbourhood $\mathcal{U}$ of $f$ such that for any $x\in \mathcal{O}$ and for any $g \in \mathcal{U}$, the  continuation $\gamma^{x,g}=[x,x_2^{x,g},\dots,x_k^{x,g}]$ of $\gamma$ starting at $x$  given by Lemma \ref{lemma continuation} is well-defined.  
	
	Then,	
	the family of $C^1$ maps $\{H_{g,\gamma^{x,g}}^*|_{\cW_g^{c}(x)}\}_{x \in \mathcal{O},\, g\in \mathcal{U}^\mathscr{F}}$  depends continuously in the $C^1$ topology with the choices of  the point $x \in \mathcal{O}$ and the map $g \in \mathcal{U}^\mathscr{F}$. 
\end{corollary}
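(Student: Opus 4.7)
The plan is to induct on the length $k$ of the accessibility sequence $\gamma$, reducing the statement to successive applications of Lemma \ref{lemme Davi} combined with the standard fact that composition is a continuous operation in the $C^1$ topology (once all maps have been identified with maps between fixed model disks). The case $k=1$ is trivial, since $H_{g,\gamma^{x,g}}$ is the identity on $\cWc_g(x)$, and the case $k=2$ is exactly Lemma \ref{lemme Davi} applied to the pair $(x,x_2^{x,g})$, whose existence and continuous dependence on $(x,g)$ are provided by Lemma \ref{lemma continuation}.

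For the inductive step I will split the continuation into the subsequence $\gamma_1^{x,g}:=[x,x_2^{x,g},\dots,x_k^{x,g}]$ of length $k$ and the single leg $\gamma_2^{x,g}:=[x_k^{x,g},x_{k+1}^{x,g}]$; after further shrinking $\mathcal{O}$ and $\mathcal{U}$, Lemma \ref{lemma continuation} ensures that all the intermediate corners are well-defined and depend continuously on $(x,g)$. Setting
\[
\Phi^{x,g}:=H_{g,\gamma_1^{x,g}}\big|_{\cWc_g(x)},\qquad \Psi^{x,g}:=H_{g,\gamma_2^{x,g}}\big|_{\cWc_g(x_k^{x,g})},
\]
one has $H_{g,\gamma^{x,g}}|_{\cWc_g(x)}=\Psi^{x,g}\circ\Phi^{x,g}$. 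The inductive hypothesis yields continuity of $(x,g)\mapsto \Phi^{x,g}$ in the $C^1$ topology, while the $k=2$ case applied at the varying base point $x_k^{x,g}$ gives the same for $\Psi^{x,g}$. The chain rule together with uniform continuity of the first derivatives then transfers this continuity to the composition, closing the induction.

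The only part that requires genuine care is giving meaning to $C^1$-convergence of maps whose domains and codomains, namely small pieces of the center plaques $\cWc_g(x_i^{x,g})$, themselves move with $(x,g)$. To handle this, I would fix a smooth chart on a small neighbourhood of each $x_i$ via the ambient Riemannian exponential map, and represent the nearby plaques $\cWc_g(x_i^{x,g})$ as $C^1$ graphs over $T_{x_i}\cWc_f(x_i)$; standard graph transform estimates (see \cite{HPS}), together with the strong bunching condition defining $\mathscr{F}$, ensure that these graphs depend continuously in the $C^1$ topology on $(x,g)$. In these fixed coordinates $\Phi^{x,g}$ and $\Psi^{x,g}$ become $C^1$ maps between fixed open sets in $\R^2$, and the composition argument proceeds verbatim. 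This identification is really the only non-routine point; everything else is a direct book-keeping consequence of Lemmas \ref{lemma continuation} and \ref{lemme Davi}.
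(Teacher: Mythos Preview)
Your proof is correct and follows essentially the same approach as the paper: the paper simply observes that the holonomy along an accessibility sequence is the composition of the single-leg holonomies (recall \eqref{compo holonomies}) and states the corollary as an immediate consequence of Lemma \ref{lemme Davi}, without further detail. Your write-up spells out the induction and the identification of moving center plaques with fixed model disks more carefully than the paper does, but the underlying argument is the same.
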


For any point $x \in M$ and any subset $\mathscr{G} \subset \mathscr{F}$, we let $\mathscr{G}_1(x)\subset \mathscr{G}$ be the subset of maps $f$ for which the center accessibility class $\cac_{f}(x)$ is one-dimensional. For any $f \in \mathscr{G}_1(x)$, and for any sufficiently small $\theta,\varepsilon>0$, we let $\mathscr{C}_f(x,\theta,\varepsilon)\subset \cW_f^c(x)\cap B(x,\varepsilon)$ be the set of points in the $\varepsilon$-ball $B(x,\varepsilon)$ centered at $x$ which belong to (the image by the exponential map of) the cone of angle $\theta$ around $T_x \cac_f(x)$, i.e., 
\begin{equation}\label{def connne}
\mathscr{C}_f(x,\theta,\varepsilon):=\exp_x\{y\in T_x M: \angle (y,T_x \cac_f(x))<\theta\}\cap B(x,\varepsilon). 
\end{equation}

The main result of this section is the following:
\begin{prop}\label{lemma var des classes}
	Take $f\in \mathscr{F}$ and $x\in M$ such that $\cac_{f}(x)$ is one-dimensional, i.e., $f \in \mathscr{F}_1(x)$. Then, for every $\theta>0$ there exists a $C^1$ neighbourhood $\cU$ of $f$ such that for every $g\in \mathcal{U}^\mathscr{F}_{1}(x)$, the angle at $x$ between $\cac_{f}(x)$ and $\cac_{g}(x)$ satisfies  
	$$
	\angle (T_{x}\cac_{f}(x),T_{x}\cac_{g}(x))<\theta.
	$$ 
	Moreover, there exists $\varepsilon_0>0$ such that for any $g \in \cU^\mathscr{F}_1(x)$ and $\varepsilon \in (0,\varepsilon_0)$,  it holds 
	$$ \cac_g(x,\varepsilon)\subset \mathscr{C}_f(x,\theta,\varepsilon).$$  
\end{prop}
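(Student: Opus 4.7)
My plan is to adapt the strategy of \cite[Proposition 2.19]{HS}, which establishes the analogous $C^1$ continuity of the tangent direction of one-dimensional center accessibility classes when the base point $x$ varies; here the diffeomorphism $f$ itself plays the role of the parameter, and the technical tool replacing continuity of holonomies in the base point is Corollary~\ref{lemma dependence family}.

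First I would realize $\ell_f := T_x\cac_f(x)$ concretely through a specific accessibility sequence. Since $\cac_f(x)$ is a one-dimensional $C^1$ submanifold of $\cW^c_f(x)$ tangent to $\ell_f$ at $x$, I can pick $y \in \cac_f(x)\setminus\{x\}$ close enough to $x$ that the direction $v := \exp_x^{-1}(y)/\|\exp_x^{-1}(y)\|$ makes angle less than $\theta/4$ with $\ell_f$, together with an $f$-accessibility sequence $\gamma=[x_0=x,x_1,\dots,x_k=y]$ from $x$ to $y$. I would arrange $\gamma$ to be an us-loop in the sense of Definition~\ref{def 4 legged}, so that its terminal corner is produced by an explicit stable holonomy of a point lying in $\cW^{cs}_{f,\mathrm{loc}}(x)$; up to an additional short stable segment projecting to the appropriate center leaf, this closure property persists for the continuation $\gamma^{x,g}$ provided by Lemma~\ref{lemma continuation}, so that for every $g$ in a $C^1$-neighborhood of $f$, the endpoint $y^g$ of $\gamma^{x,g}$ lies on $\cW^c_{g,\mathrm{loc}}(x)$ and hence belongs to $\cac_g(x)$.

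Next I would exploit Corollary~\ref{lemma dependence family} in two complementary ways. On the one hand, by shrinking the $C^1$-neighborhood $\mathcal{U}$ of $f$, the continuity of the composed holonomy along $\gamma^{x,g}$ ensures $y^g \to y$ as $g \to f$, so the direction $v^g := \exp_x^{-1}(y^g)/\|\exp_x^{-1}(y^g)\|$ stays within $\theta/3$ of $\ell_f$ for all $g \in \mathcal{U}^\mathscr{F}$. On the other hand, varying the initial corner of $\gamma$ along $\cW^c_{f,\mathrm{loc}}(x)$ yields a one-parameter $C^1$ family of accessibility sequences whose endpoints locally parametrize $\cac_g(x)$ near $y^g$; by Corollary~\ref{lemma dependence family} this parameterization is jointly $C^1$-continuous in the base point and in $g \in \mathcal{U}^\mathscr{F}$, which gives a modulus of $C^1$-regularity for the arcs $\cac_g(x)$ near $x$ that is uniform in $g \in \mathcal{U}^\mathscr{F}_1(x)$.

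Combining the two, for $g \in \mathcal{U}^\mathscr{F}_1(x)$ the uniform $C^1$-regularity forces the direction of $\exp_x^{-1}(y^g)$ to be within $\theta/3$ of $\ell_g := T_x \cac_g(x)$, whence $\angle(\ell_f, \ell_g) < \theta$. The same uniform regularity, applied now to any $z \in \cac_g(x,\varepsilon)$ for $\varepsilon < \varepsilon_0$ small enough, shows that $\exp_x^{-1}(z)$ lies in the cone of angle $\theta$ around $\ell_f$, i.e., $\cac_g(x,\varepsilon) \subset \mathscr{C}_f(x,\theta,\varepsilon)$. I expect the main obstacle to lie precisely in this uniform $C^1$-modulus for $\cac_g(x)$: one must check that the Horita--Sambarino lamination construction extends to families of diffeomorphisms, and this is where the strong bunching hypothesis of Definition~\ref{stron bunch} becomes essential, since it is what allows Corollary~\ref{lemma dependence family} to provide joint $C^1$-continuity of accessibility holonomies in the base points and in the diffeomorphism.
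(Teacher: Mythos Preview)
Your step 4 contains a genuine gap. Varying the initial corner of $\gamma$ along $\cW^c_{f,\mathrm{loc}}(x)$ does \emph{not} parametrize $\cac_g(x)$: if the initial corner is $x'\in\cW^c_{f,\mathrm{loc}}(x)$, the endpoint of $\gamma^{x',g}$ lies in $\cac_g(x')$, which in general is a different class from $\cac_g(x)$. So Corollary~\ref{lemma dependence family} applied to this family controls how the endpoint varies with $x'$ in the center leaf, but says nothing about the shape of the single class $\cac_g(x)$. Consequently you have no uniform $C^1$ modulus for $\cac_g(x)$ near $x$, and the passage from the secant direction $v^g$ (toward a point $y^g$ at \emph{fixed} distance $\approx d(x,y)$) to the tangent $\ell_g=T_x\cac_g(x)$ is unjustified. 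You correctly identify this as the main obstacle, but the proposed fix does not address it.

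The paper circumvents this by an indirect argument. Instead of one loop, it takes a \emph{one-parameter} family $\{\gamma(t)\}_{t\in[0,1]}$ of us-loops at $x$, obtained by shrinking the legs so that $\gamma(0)$ is trivial. For each $g$ close to $f$, the curve $\beta\colon t\mapsto H_g^t(x)$ then lies entirely in $\cac_g(x)$, and the homogeneity of accessibility classes gives $\mathrm{Span}(\dot\beta(t))=D_xH_g^t\big(T_x\cac_g(x)\big)$ for all $t$. One then argues by contradiction: if $\angle(\ell_f,\ell_{g_n})>\eta$ along a sequence $g_n\to f$, then for $n$ large the curve $\beta$ crosses a thin tubular neighbourhood of $\alpha\colon t\mapsto H_f^t(x)$ from one end to the other, so by an elementary mean-value lemma (Lemma~\ref{elelemHS}) there is some $t$ with $\angle(\dot\beta(t),\dot\alpha(\xi(\beta(t))))$ small. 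But the uniform $C^1$ closeness of $H_{g_n}^t$ to $H_f^t$ furnished by Corollary~\ref{lemma dependence family}, combined with the assumed angle $>\eta$ at $x$, forces $\angle(\dot\beta(t),\dot\alpha(\cdot))$ to stay bounded below, a contradiction. The second part of the proposition is obtained by the same scheme, placing the ``big angle'' at a nearby point $y_n\in\cac_{g_n}(x)$ produced by the mean value theorem. The key idea you are missing is precisely this: parametrize $\cac_g(x)$ by the \emph{loop parameter} $t$, not by the initial corner, and use homogeneity to transport the angle at $x$ along the curve.
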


\begin{figure}[H]
	\begin{center}
		\includegraphics [width=13.5cm]{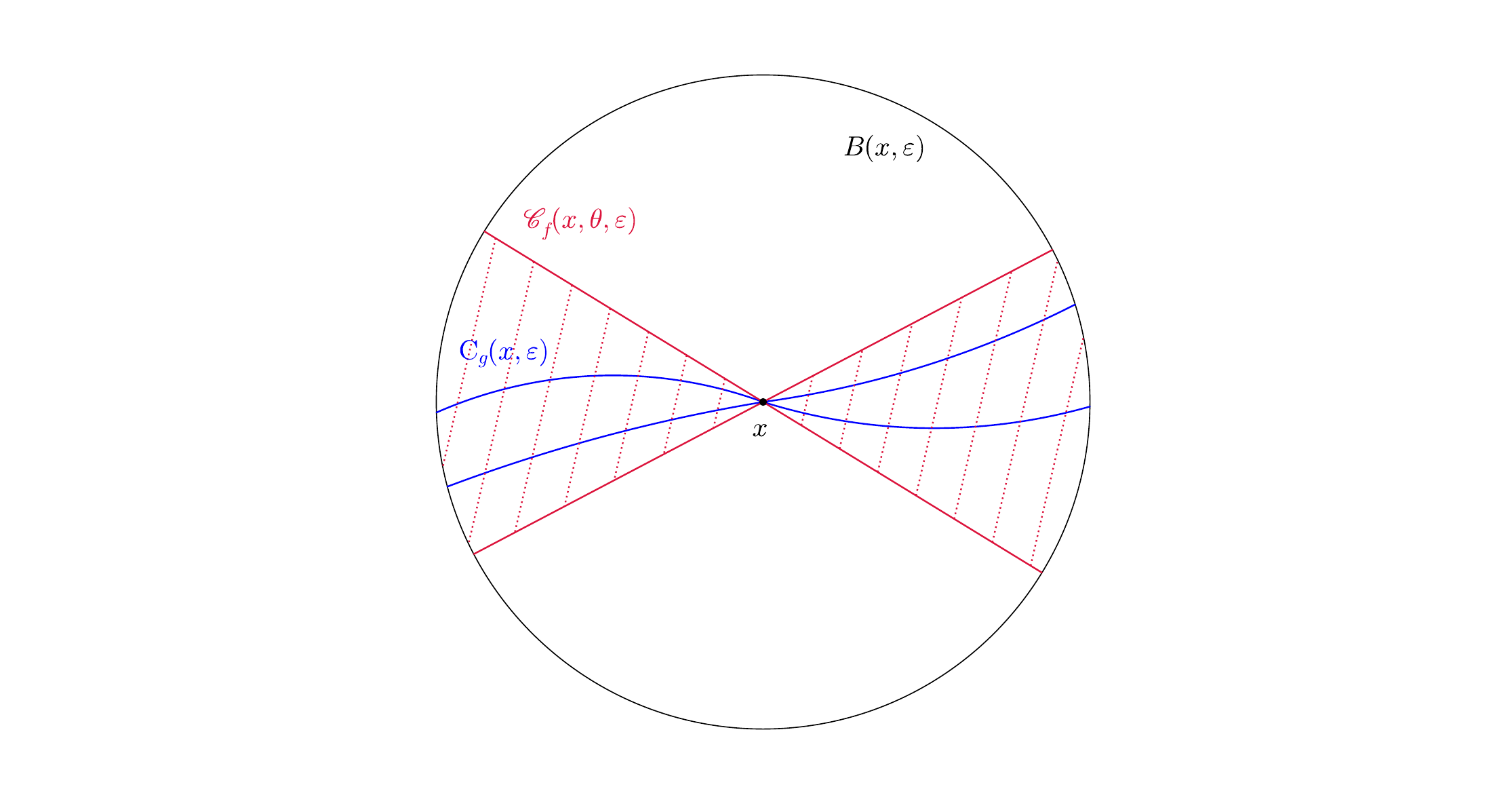}
		\caption{Variation of $1$-dimensional center accessibility classes.}
	\end{center}
\end{figure}

The idea of the proof consists in showing some ``uniform" homogeneity of one-dimensional center accessibility classes $\mathrm{C}_g(x)$ of all maps $g\in \mathcal{U}^\mathscr{F}$, for a sufficiently small $C^1$ neighbourhood $\mathcal{U}$ of a fixed $f \in \mathscr{F}$. Indeed, the tangent spaces at two points $x,y$ in the same center accessibility class are naturally related through the differential of the holonomy map along an accessibility sequence connecting $x$ to $y$. Since everything we are doing here is local, we are able to compare angles and norms of vectors in different tangent spaces, using trivialization charts as follows. Recall that $d:=\dim M$, and that we denote $d_s:=\dim E_f^s$, $d_u:=\dim E_f^u$. 
\begin{lemma}[see Construction 9.1, \cite{LZ}]\label{lemma constr phi}
	There exist $C^2$-uniform constants $\overline{h}=\overline{h}(f)>0$ and   $\overline{C}=\overline{C}(f)  > 1$ such that 
	for any  $x \in M$, there  exists a $C^r$ volume preserving map $\phi = \phi_x \colon (-\overline{h},\overline{h})^{d} \to M$ such that $\phi(0_{\R^d}) = x$ and
	\begin{enumerate}
		\item\label{notation 3 1} $\cWc_f(x,\frac{\overline{h}}{5}) \subset \phi\big((-\frac{\overline{h}}{4}, \frac{\overline{h}}{4})^{2} \times \{0\}^{d_u + d_s}\big)\subset \phi\big((-\frac{2\overline{h}}{3}, \frac{2\overline{h}}{3})^{2} \times \{0\}^{d_u + d_s}\big) \subset \cWc_f(x,\overline{h})$;
		\item\label{notation 3 2} $\|\phi\|_{C^2} < \overline{C}$;
		\item\label{notation 3 3} $D\phi(0,  \R^{2} \times \{0_{\R^{d_u + d_s}}\})$, $D\phi(0, \{0_{\R^{2}}\} \times \R^{d_u} \times \{0_{\R^{ d_s}}\})$, $D\phi(0, \{0_{\R^{2+d_u}}\} \times \R^{d_s})$ are respectively equal to $E^{c}_f(x),E^{u}_f(x), E^{s}_f(x)$;
		\item\label{notation 3 4} for any $y \in \phi((-\overline{h},\overline{h})^{d})$, $\Pi^c D(\phi^{-1})_y\colon E^c_{f}(y) \to \R^2$ has determinant in $\big(\overline{C}^{-1},\overline{C}\big)$, where $\Pi^c \colon \R^{d} \simeq \R^{2} \times \R^{d_u+d_s} \to \R^{2}$ is the canonical projection;
		\item\label{notation 3 5}
		for any $\zeta > 0$, there exists a $C^1$-uniform constant $\overline{h}_{\zeta}=\overline{h}_{\zeta}(f) \in (0,\overline{h})$ so that if $h\in (0,\overline{h}_{ \zeta})$, then for any $y \in \phi((-h,h)^{d})$, the norm of  $\Pi^c(D\phi^{-1})_y\colon E^{su}_{f}(y)  \to \R^2$ is smaller than $\zeta$.
	\end{enumerate} 
In the following, we will denote by $\Pi^c_x$ the map $\Pi_x^c:= \Pi^c \circ \phi_x^{-1} \colon M \to \R^2$.
\end{lemma}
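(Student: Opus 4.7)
The plan is to construct $\phi_x$ in two stages: first an adapted $C^r$ chart $\phi_x^0$ centered at $x$ realizing both the sandwich inclusion of the local central plaque in property (1) and the tangent-space alignment in property (3); then a volume-normalizing post-composition that keeps the coordinate central plane $\R^2\times\{0\}^{d_u+d_s}$ set-wise invariant, so that none of (1)--(3) is destroyed. Uniformity of all constants in $x$ will come from compactness of $M$ combined with continuity of the invariant splitting $E_f^s\oplus E_f^c\oplus E_f^u$.

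For the first stage, fix at each $x\in M$ an orthonormal basis of $T_xM$ adapted to $E_f^c(x)\oplus E_f^u(x)\oplus E_f^s(x)$, giving a linear isometry $L_x\colon\R^d\to T_xM$, with uniform bounds over a finite trivializing subcover of $M$. Since $\cWc_f(x)$ is an immersed $C^r$ submanifold, pick a $C^r$ parametrization $\psi_c\colon(-h_0,h_0)^2\to\cWc_f(x)$ with $\psi_c(0)=x$ and $D_0\psi_c=L_x|_{\R^2\times\{0\}}$; continuity of the center foliation in the base point together with the $C^r$-regularity of individual leaves lets these be chosen with uniform $C^2$ bounds and uniform $h_0$. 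Then extend transversally by means of a $C^r$ family of complementary frames $Q_z\colon\R^{d_u+d_s}\to T_zM$ with $Q_x=L_x|_{\{0\}\times\R^{d_u+d_s}}$ (for instance via parallel transport along the leaf), and set
\[
\phi_x^0(v_c,v_u,v_s):=\exp_{\psi_c(v_c)}\!\bigl(Q_{\psi_c(v_c)}(v_u,v_s)\bigr).
\]
Then $\phi_x^0$ is $C^r$, its image of $\R^2\times\{0\}^{d_u+d_s}$ equals the central plaque $\psi_c((-h_0,h_0)^2)$ (which yields (1) after a slight shrinking), (3) holds by construction, and uniform $C^2$ bounds on $\psi_c$, $Q$, $\exp$ give (2).

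For the second stage, the pulled-back volume $\omega:=(\phi_x^0)^*\mathrm{Vol}$ is Lebesgue at the origin by the orthonormality of $L_x$ and the fact that $D_0\exp=\mathrm{Id}$. To make the chart volume-preserving without moving the central plane, I would apply a \emph{fibered} Moser trick: first find, via Moser in each transversal fiber $\{v_c\}\times(-h_0,h_0)^{d_u+d_s}$, a $C^r$ map $(v_c,v_{su})\mapsto(v_c,\Psi(v_c,v_{su}))$ fixing each $(v_c,0)$ with identity derivative along the fiber so that the new density depends only on $v_c$; then rescale the central variable by a $2$-dimensional diffeomorphism fixing $0$ with identity derivative, to absorb the residual $v_c$-dependence. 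The composite $\psi$ fixes $0$, has $D_0\psi=\mathrm{Id}$, preserves $\R^2\times\{0\}^{d_u+d_s}$ set-wise, and satisfies $\psi^*\omega=\mathrm{Leb}$; setting $\phi_x:=\phi_x^0\circ\psi$ on a uniformly small cube $(-\overline{h},\overline{h})^d$ yields the desired $C^r$ volume-preserving chart preserving (1)--(3).

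Properties (4) and (5) then follow from a continuity/compactness argument. At $y=x$ one has $D_x\phi_x^{-1}=L_x^{-1}$, so $\Pi^c\circ (D\phi_x^{-1})_x|_{E_f^c(x)}=\mathrm{Id}_{\R^2}$ has determinant $1$ and $\Pi^c\circ (D\phi_x^{-1})_x|_{E_f^{su}(x)}=0$; continuity of the splitting, of $D\phi_x^{-1}$, and compactness of $M$ then upgrade these identities to (4) with uniform $\overline{C}$ and to (5) with uniform $\overline{h}_\zeta=\overline{h}_\zeta(f)$. The main obstacle, to my mind, is the uniformity of $\psi_c$ in $x$: the center foliation is only continuous in the base point (with $C^r$ leaves), so securing uniform $C^2$ bounds and a uniform cube size requires a careful argument over a finite trivializing subcover of $M$, exploiting the leafwise $C^r$-regularity together with the continuous dependence of the center distribution, and also ensuring that the fibered Moser adjustment admits uniform $C^2$ estimates.
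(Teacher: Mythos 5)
The paper does not actually prove this lemma: it is imported from Construction~9.1 of \cite{LZ}. Your two-stage scheme --- an adapted $C^r$ chart obtained by parametrizing the local center plaque and extending transversally via the exponential map, followed by a fiber-preserving Moser correction that keeps $\R^2\times\{0\}^{d_u+d_s}$ invariant --- is essentially the construction carried out there, and properties (3)--(5) do follow, as you say, from the derivative computation $D_0\phi_x=L_x$ together with the uniform continuity of the splitting on the compact manifold $M$.

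There is, however, one genuine gap, which you correctly identify as the main obstacle but then dispatch too quickly. You claim that ``continuity of the center foliation in the base point together with the $C^r$-regularity of individual leaves'' yields parametrizations $\psi_c$ of the center plaques with \emph{uniform} $C^2$ bounds on a uniform domain. Neither ingredient gives this: pointwise $C^r$ regularity of each leaf of $\cWc_f$ carries no uniform bound on second derivatives, and the distribution $E_f^c$ is merely continuous (H\"older), so passing to a finite trivializing subcover does not upgrade leafwise regularity to a uniform $C^2$ modulus. The uniform bound has to be extracted from the way the center plaques are actually produced --- as fixed points of a graph transform acting on a space of sections with a priori bounded $C^2$ norms --- and this requires a quantitative ($2$-normal-hyperbolicity type) bunching input; it is exactly here that the hypotheses of \cite{LZ} enter, so this should be the content of the proof rather than a remark. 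A second, fixable, glitch: in the Moser step you assert $D_0\psi=\mathrm{Id}$, but the fiberwise Dacorogna--Moser solution only guarantees $\det D_0\psi=1$ with $D_0\psi$ preserving the center plane; it need not preserve the coordinate $u$- and $s$-planes, which would break property (3) for $E_f^u(x)$ and $E_f^s(x)$. You must post-compose with the linear map $(D_0\psi)^{-1}$ (which is volume-preserving and preserves $\R^2\times\{0\}^{d_u+d_s}$) before concluding.
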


Before giving the proof of Proposition \ref{lemma var des classes}, let us state an elementary lemma and introduce a notation. Let $\alpha\colon [0,1]\to M$ be a $C^{1}$ arc of $M$ and given $\epsilon>0$, consider an $\epsilon$ tubular neighbourhood $\mathscr{N}_{\alpha,\epsilon}$ of $\alpha$. This tubular neighbourhood is diffeomorphic to $[0,1]\times [-\epsilon,\epsilon]^{d-1}$. We identify points in $\mathscr{N}_{\alpha,\epsilon}$ with pairs $(t,s)$, where $t\in [0,1]$ and $s\in [-\epsilon,\epsilon]^{d-1}$. We call the boundary $\{0\}\times [-\epsilon, \epsilon]^{d-1}$ the \emph{left side} of $\mathscr{N}_{\alpha,\epsilon}$, and we call the boundary $\{1\}\times [-\epsilon,\epsilon]^{d-1}$ its \emph{right side}. We denote by $\xi\colon \mathscr{N}_{\alpha,\epsilon}\to \alpha$ the projection $\xi\colon (t,s)\mapsto \alpha(t)$. 

\begin{lemma}\label{elelemHS}
	With the notation above, given $\delta>0$, there exists $\epsilon>0$ such that if $\beta\colon [0,1] \to \mathscr{N}_{\alpha,\epsilon}$ is a $C^{1}$ curve in $\mathscr{N}_{\alpha,\epsilon}$ from the left to the right side, then there exists some  $(t,s)=\beta(\tilde{t})$ with $\tilde t \in [0,1]$ such that the angle between $\alpha$ and $\beta$ satisfies  
	$$
	\angle(\dot{\alpha}(t),\dot{\beta}(\tilde{t}))<\delta.
	$$
\end{lemma}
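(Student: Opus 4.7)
The plan is to reduce the lemma to a scalar mean-value estimate by writing $\beta$ as a graph over the $\alpha$-direction. The underlying intuition is that if the tube is thin enough compared to $\delta$, any $C^1$ curve spanning it from one end to the other must at some point be nearly tangent to the axis, because the total vertical excursion available is of size $O(\epsilon)$ whereas the horizontal progress is one full unit.

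First I would work in the trivialization $\mathscr{N}_{\alpha,\epsilon}\cong[0,1]\times[-\epsilon,\epsilon]^{d-1}$ of the tube, and write $\beta(\tilde t)=(t(\tilde t),s(\tilde t))$ with $t(0)=0$, $t(1)=1$. In these coordinates the horizontal direction $\partial_t$ corresponds to $\dot\alpha(t)$ up to a multiplicative distortion that is bounded uniformly in $\epsilon$ (coming from the $C^1$ norm of the trivialization chart and the $C^1$ parametrization of $\alpha$), so it suffices to exhibit $\tilde t^*$ for which the coordinate vector $\dot\beta(\tilde t^*)$ makes a small angle with $\partial_t$.

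Next I would extract a sub-interval $[a,b]\subset[0,1]$ on which $t(\cdot)$ is strictly increasing with $t(b)-t(a)\geq 1/2$. This follows from a routine argument using continuity of $t(\cdot)$ and its boundary values, for instance by reparametrizing along the running maximum of $t$ and taking the preceding monotone piece of length at least $1/2$. On $[a,b]$, the inverse of $t$ realizes $\beta$ as the graph of a $C^1$ map $\sigma\colon[t(a),t(b)]\to[-\epsilon,\epsilon]^{d-1}$ with $\sigma'(t)=\dot s(\tilde t)/\dot t(\tilde t)$ at corresponding points. Since $\sigma$ takes values in a box of side $2\epsilon$ on an interval of length $\geq 1/2$, a mean value argument produces $t^*$ with $|\sigma'(t^*)|\leq C\epsilon$ for a constant $C$ depending only on $d$. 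Translating back, the corresponding $\tilde t^*\in(a,b)$ satisfies $\dot\beta(\tilde t^*)=\dot t(\tilde t^*)\,(1,\sigma'(t^*))$ with $\dot t(\tilde t^*)>0$, so its angle with $\partial_t$ is at most $\arctan(C\epsilon)$. Choosing $\epsilon$ small enough in terms of $\delta$, $d$, and the chart distortion closes the argument.

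The hard part is the vector-valued mean value step when $d-1\geq 2$: a priori the scalar MVT only guarantees that each coordinate $\sigma'_i$ is small at some $t^*_i$, and these points may be distinct. One has to combine the coordinatewise estimates, for instance by subdividing $[t(a),t(b)]$ into finitely many equal intervals, applying the scalar MVT to each coordinate on each piece, and then using pigeonhole to locate an interval on which all coordinates have small mean slope simultaneously. In the actual uses of the lemma in this paper both $\alpha$ and $\beta$ lie inside the two-dimensional center leaf $\mathcal{W}^c_f(x)$, so $\sigma$ is scalar and this technical point disappears entirely, leaving only a single application of the ordinary mean value theorem.
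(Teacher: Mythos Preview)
The paper does not actually prove this lemma; it is introduced as an ``elementary lemma'' immediately before the proof of Proposition~\ref{lemma var des classes} and left without argument, so there is nothing in the paper to compare your proof against directly.

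Your reduction to tube coordinates and your final observation that the application only needs the planar case (both $\alpha$ and $\beta$ lie in a two-dimensional center leaf) are correct and to the point. However, the key step --- extracting a sub-interval $[a,b]$ on which $t(\cdot)$ is strictly increasing with $t(b)-t(a)\geq 1/2$ --- is a genuine gap. The justification you sketch (``reparametrizing along the running maximum of $t$ and taking the preceding monotone piece'') does not produce such an interval, and in fact the claim is false: take a $C^1$ function $t$ that climbs $0\to 0.4$, dips to $0.2$, climbs to $0.6$, dips to $0.4$, climbs to $0.8$, dips to $0.6$, and finally climbs to $1$; every maximal monotone increasing piece has $t$-image of length $0.4<1/2$. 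So the graph-over-$t$ representation cannot be set up as you describe, and the mean-value step that follows has no interval to act on.

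For the relevant two-dimensional case a direct argument bypasses this issue. Reparametrize $\beta$ by arclength and suppose the (unoriented) angle with the axis is $\geq\delta$ everywhere; then $|\dot t|\leq\cos\delta$, hence the scalar $\dot s$ satisfies $|\dot s|\geq\sin\delta$ and, being continuous and never vanishing, keeps a constant sign. Thus $s$ is strictly monotone with $|\dot s|\geq\sin\delta$, while the arclength $L$ satisfies $L\geq 1$ (since $1=\int_0^L\dot t\leq L$), giving $|s(L)-s(0)|\geq L\sin\delta\geq\sin\delta$; this contradicts $|s|\leq\epsilon$ once $\epsilon<(\sin\delta)/2$. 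Incidentally, your worry about the vector-valued mean-value step is well founded: for $d-1\geq 2$ a helix of constant pitch angle $\alpha\in(\delta,\pi/2)$ inside the tube shows the lemma as literally stated fails, but as you note, only the planar case is ever used.
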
	 

\begin{proof}[Proof of Proposition \ref{lemma var des classes}]
	Let us show the first part. Suppose by contradiction that for some $\eta>0$, there exists a sequence $(g_n)_{n \geq 0}\in \mathscr{F}^\N$ of maps such that  $g_{n}\to f$ in the $C^1$ topology, with $g_n \in \mathscr{F}_1(x)$ and  
	\begin{equation}\label{angle eta}
	\angle(T_{x}\cac_{f}(x),T_{x}\cac_{g_n}(x))>\eta,\quad \text{for all } n \geq 0.
	\end{equation}
	
	Since $\cac_f(x)$ is one-dimensional, for some integer $n \geq 2$, there exists a $2n$ us-loop $\gamma=[x,x_1,\dots,x_{2n}]$ at $(f,x)$ such that $x_{2n} \neq x$. By shrinking the size of the legs, we get a one-parameter family $\{\gamma(t)=[x,x_1(t),\dots,x_{2n}(t)]\}_{t\in [0,1]}$ of $2n$ us-loops at $(f,x)$, where $\gamma(1)=\gamma$ and $\gamma(0)$ is trivial.   By Lemma \ref{lemma continuation}, there exists 
	a $C^1$ neighbourhood $\widetilde \cU$ of $f$ such that for any $g\in \widetilde \cU$  
	and for any $t \in [0,1]$,  
	there exists a one-parameter family  $\{\gamma^{x,g}(t)=[x,x_1^{x,g}(t),\dots,x_{2n}^{x,g}(t)]\}_{t\in [0,1]}$ of $2n$ us-loops at $(g,x)$ such that $\gamma^{x,g}(0)$ is the trivial loop. We also denote    $\alpha_{g,x} \colon t \mapsto x_{2n}^{x,g}(t)\in \cac_g(x)$. 
	
	For each pair $(g,t)\in \widetilde{\mathcal{U}} \times [0,1]$  we have the corresponding holonomy map $H_g^{t}:=H_{g,\gamma^{x,g}(t)}|_{\cW^{c}_{g,\mathrm{loc}}(x)}\colon\cW^{c}_{g,\mathrm{loc}}(x)\to \cW^{c}_{g,\mathrm{loc}}(x)$. Given some small $h>0$, and assuming that $\widetilde{\mathcal{U}}$ is sufficiently small, then for every  map $g\in \widetilde{\mathcal{U}}^\mathscr{F}$, we   take a $C^1$ chart $\phi_{x,g} \colon(-h,h)^2 \to \cW^{c}_{g,\mathrm{loc}}(x)$ as in Lemma \ref{lemma constr phi}; as center leaves vary continuously with respect to $g$ in the $C^{1}$ topology, the map $g \mapsto \phi_{x,g}$  depends continuously on $g$ in the $C^1$ topology. After replacing $H_g^t$ with $\phi_{x,g}^{-1} \circ H_g^t \circ \phi_{x,g}$, we can compare angles and norms of vectors for diffeomorphisms in a neighbourhood of $f$; by a slight abuse of notation,  we will still denote this map by $H_g^t$ for simplicity. By Corollary \ref{lemma dependence family}, and by compactness of $[0,1]$, we deduce that the family of holonomy maps $\{H_g^{t}\}_{(t,g)\in [0,1]\times \widetilde{\mathcal{U}}^\mathscr{F}}$ is uniformly $C^1$.  In particular,  for any $\delta>0$, 
	there exists a $C^1$ neighbourhood $\mathcal{U}_\delta$ of $f$ such that for $\mathcal{U}^\mathscr{F}_\delta:=\mathcal{U}_\delta  \cap\mathscr{F} $, it holds 
	\begin{equation}\label{unif hom}
	\sup_{(t,g) \in [0,1]\times \mathcal{U}^\mathscr{F}_\delta}\|DH_g^{t}-D H_f^{t}\|< \delta.
	\end{equation}
	
	Therefore, for every $\theta>0$, there exist $\delta_{0}>0$, $\rho_{0}>0$   such that for $g\in \cU^\mathscr{F}_{\delta_0}$ and for any $t\in [0,1]$, if $y\in \cW^{c}_{g,\mathrm{loc}}(x)$ is such that $d(x,y)<\rho_{0}$ and if the  vectors $v,w\in \R^2$ satisfy $\angle(v,w)>\theta$, then we have  
	\begin{equation}\label{HS1}
	\angle(D_{x} H_{f}^{t}(v),D_{y} H_{g}^{t}(w))>\delta_{0}. 
	\end{equation} 
	
	As invariant manifolds depend continuously on the diffeomorphism $g \in \mathcal{U}_{\delta_0}^\mathscr{F}$, for any $\epsilon_0>0$,  there exists $\rho(\epsilon_0)>0$ such that for any $y \in B(x,\rho(\epsilon_0))$, for any $t \in [0,1]$ and for any $g \in \mathcal{U}_{\delta_0}^\mathscr{F}$ (taking a smaller $\delta_0$ if necessary),  it holds 
	\begin{equation}\label{HS3} 
	d(H_{f}^{t}(x),\xi(H_{g}^{t}(y)))<\epsilon_{0}.
	\end{equation}
Since the center accessibility class $\cac_{f}(x)$ is $C^{1}$, the map $\cac_{f}(x)\ni z\mapsto T_{z}\cac_{f}(x)$ is continuous, hence, if 
	$\epsilon_{0}>0$  is chosen sufficiently small, then for any $t \in [0,1]$,  
	$g \in \mathcal{U}_{\delta_0}^\mathscr{F}$, and $y \in B(x,\rho(\epsilon_0))$,  
	we have 
	\begin{equation}\label{HS2}
	\angle(T_{H_{f}^{t}(x)}\cac_{f}(x),T_{\xi(H_{g}^{t})(y)}\cac_{f}(x))<\frac{\delta_{0}}{2}.
	\end{equation} 
	Now we argue as in Proposition 2.19 of \cite{HS}. For $\theta=\eta$ (recall \eqref{angle eta}) we take $\delta_{0}=\delta_{0}(\theta)>0$ as in \eqref{HS1} and we set $\delta:=\frac{\delta_{0}}{2}>0$. 
	
	Since $g_n\to f$, we can take $n$ large enough so that $g_n \in \mathcal{U}_{\delta_0}^\mathscr{F}$ and such that the arc $\{\alpha_{g_n,x}(t)\}_{t\in [0,1]}$ is a curve that crosses $\mathscr{N}_{\alpha_{f,x},\epsilon_0}$ from the left to the right side.  Set $\beta:=\alpha_{g_n,x}$. Note that if $t\in [0,1]$ is such that $\beta(t)\in \mathscr{N}_{\alpha_{f,x},\epsilon_0}$, then 
	\begin{equation}\label{homogeneite}
	\textnormal{Span}(\dot{\beta}(t))=D_{x} H_{g_n}^{t}(T_{x}\cac_{g_{n}}(x)).
	\end{equation} 
	Then, by \eqref{angle eta}, \eqref{HS1}, \eqref{HS2}, \eqref{homogeneite}, we deduce that for any $t \in [0,1]$,
	\begin{align*}
		&\angle(\dot{\beta}(t),T_{\xi(\beta(t))}\cac_{f}(x)) = \angle(D_{x}H_{g_n}^{t}(T_{x}\cac_{g_{n}}(x)),T_{\xi(H_{g_n}^{t}(x))} \cac_{f}(x)) \\
		&\geq  \angle(D_{x}H_{g_n}^{t}(T_{x}\cac_{g_n}(x)), T_{H_{f}^{t}(x)}\cac_{f}(x))
		- \angle(T_{H_{f}^{t}(x)}\cac_{f}(x), T_{\xi(H_{g_n}^{t}(x))}\cac_{f}(x)) \\
		&= \angle(D_{x}H_{g_n}^{t} (T_{x}\cac_{g_{n}}(x)),D_{x}H_{f}^{t}(T_{x}\cac_{f}(x)))
		-\angle(T_{H_{f}^{t}(x)}\cac_{f}(x), T_{\xi(H_{g_n}^{t}(x))}\cac_{f}(x)) \\
		&>\delta_{0}-\frac{\delta_{0}}{2}=\delta,
	\end{align*} 
	which contradicts Lemma \ref{elelemHS}. 
	
		\begin{figure}[H]
		\begin{center}
			\includegraphics [width=13.5cm]{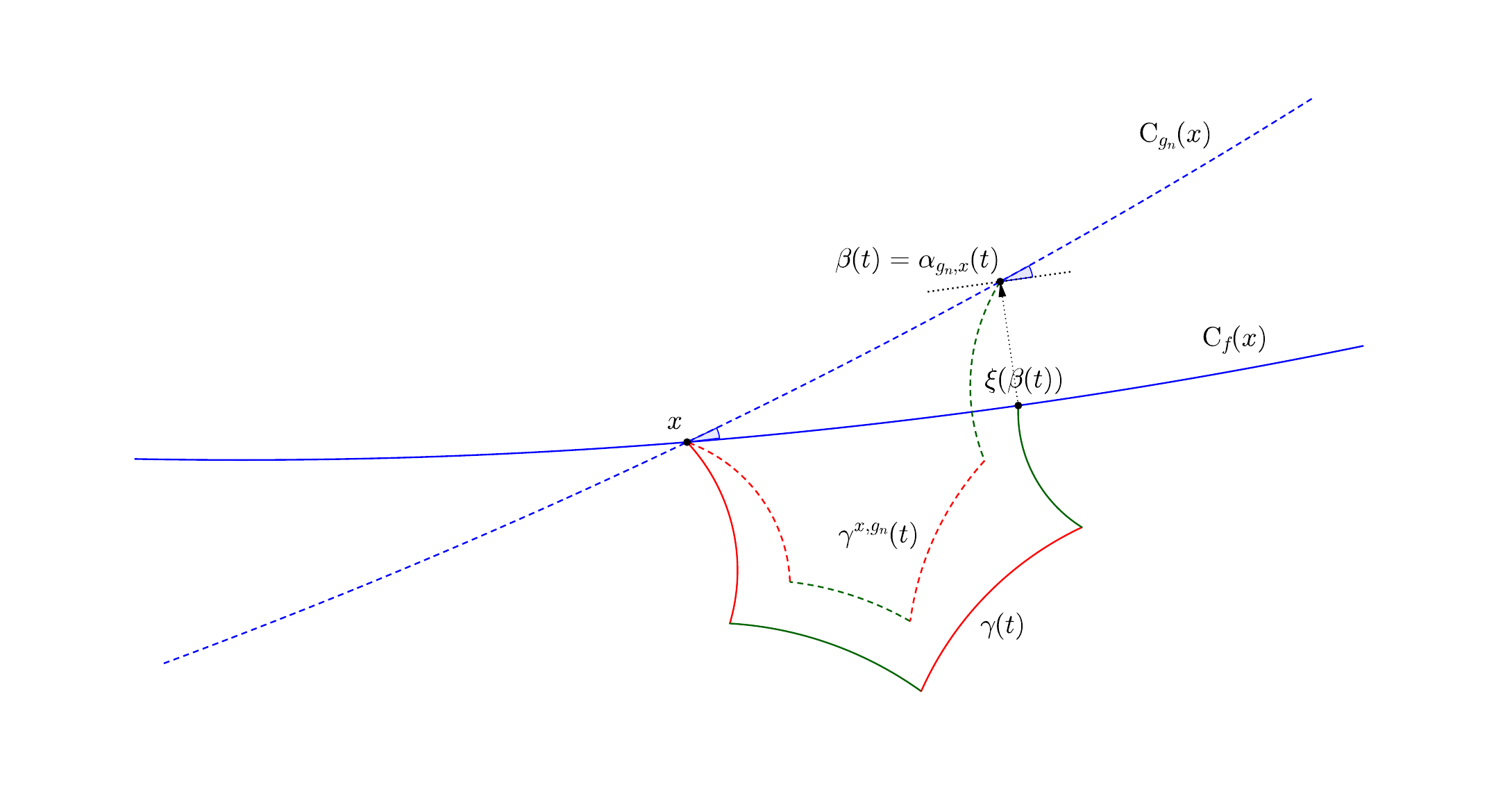}
			\caption{Tangent spaces to $\cac_f(x)$, resp. $\cac_{g_n}(x)$ at $\xi(\beta(t))$, resp. $\beta(t)$.}
		\end{center}
	\end{figure}
	
	The proof of the second part of the proposition is similar. We know from the previous part that given $\theta>0$ there is a $C^1$ neighbourhood $\cU$ of $f$ such that for every $g\in \cU^\mathscr{F}_{1}(x)$, it holds $\angle(T_{x}\cac_{f}(x),T_{x}\cac_{g}(x))<\theta$. Now we want to see the variation of the leaves at uniform (small) scale. Let us then suppose by contradiction that there are sequences $g_{n}\to f$ and $x_{n}\to x$ such that $x_{n}\in \cac_{g_{n}}(x,\frac 1n)\setminus \mathscr{C}_f(x,\theta,\frac 1n)$. By Lagrange Mean Value Theorem, this implies that there is $y_{n}\in \cac_{g_{n}}(x,\frac 1n)$ such that $\angle(T_{y_{n}}\cac_{g_{n}}(x),T_{x}\cac_{f}(x))>\theta$. Take  $\delta_0=\delta_0(\theta)>0$, $\epsilon_0>0$ sufficiently small, and $n>0$ sufficiently large such that  $g_n \in \mathcal{U}_{\delta_0}^\mathscr{F}$, $y_{n}\in B(x,\rho(\epsilon_0))$,  and such that the curve $\beta_{1}\colon t\mapsto H_{g_{n}}^{t}(y_{n})$ crosses $\mathscr{N}_{\alpha_{f,x},\epsilon_0}$ from the left to the right side. Now we argue as above, the only difference being that the role of the ``big angle'' is played by $\angle( T_{y_{n}}\cac_{g_{n}}(x),T_{x}\cac_{f}(x))$ instead of $\angle(T_{x}\cac_{g_{n}}(x),T_{x}\cac_{f}(x))$: for any $t \in [0,1]$, it holds 
	\begin{align*}
		&\angle(\dot{\beta_{1}}(t),T_{\xi(\beta_{1}(t))}\cac_{f}(x)) = \angle(D_{y_n}H_{g_n}^{t}(T_{y_n}\cac_{g_{n}}(x)),T_{\xi(H_{g_n}^{t}(y_n))}\cac_{f}(x)) \\
		&> \angle(D_{y_n}H_{g_n}^{t}(T_{y_n}\cac_{g_{n}}(x)),T_{H_{f}^{t}(x)}\cac_{f}(x))
		- \angle(T_{H_{f}^{t}(x)}\cac_{f}(x), T_{\xi(H_{g_n}^{t}(y_n))}\cac_{f}(x)) \\
		&= \angle(D_{y_n}H_{g_n}^{t}(T_{y_n}\cac_{g_{n}}(x)),D_{x} H_{f}^{t}(T_{x}\cac_{f}(x))) 
		-\angle(T_{H_{f}^{t}(x)}\cac_{f}(x), T_{\xi(H_{g_n}^{t}(x))}\cac_{f}(x)) \\
		&>\delta_{0}-\frac{\delta_{0}}{2}=\delta,
	\end{align*}
	which again contradicts Lemma \ref{elelemHS}. This concludes the proof.
\end{proof}

 As it will be used in the proof, let us recall the following result of \cite{HS}:
\begin{prop}[Corollary 2.20, \cite{HS}]\label{prop lamination}
	Let $\mathcal{C}$ be a center disk  of $f$ such that $\mathcal{C}\cap  \Gamma_f^0=\emptyset$. Then the set $\Gamma_f^1(\mathcal{C})$ of points with $1$-dimensional center accessibility classes in $\mathcal{C}$ admits a $C^1$ lamination whose leaves are the manifolds $\cac_f(y)\cap \cC$, $y \in \Gamma_f^1(\mathcal{C})$.  
\end{prop}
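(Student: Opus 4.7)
The plan is to adapt the proof of Proposition \ref{lemma var des classes}, this time varying the base point $y$ rather than the diffeomorphism $f$. The main step is to show that the map $y'\mapsto T_{y'}\cac_f(y')$ is continuous on $\Gamma_f^1(\mathcal{C})$; once this is established, the $C^1$-lamination structure follows from standard arguments, since by Theorem \ref{theorem structure} each leaf $\cac_f(y)\cap\mathcal{C}$ is already a $C^1$ one-dimensional submanifold of $\mathcal{C}$ and distinct accessibility classes are disjoint (so one can build lamination charts by choosing a small transversal $T$ to $T_{y_0}\cac_f(y_0)$ at a chosen point $y_0$, and projecting nearby leaves onto $T$).

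Fix $y_0\in \Gamma_f^1(\mathcal{C})$. By non-triviality of $\cac_f(y_0)$, there is a non-degenerate us-loop $\gamma=[y_0,x_1,\dots,x_{2k}]$ with $x_{2k}\neq y_0$, which by proportionally shrinking the legs embeds into a continuous one-parameter family $\{\gamma(s)\}_{s\in [0,1]}$ of us-loops at $(f,y_0)$ with $\gamma(0)$ trivial and $\gamma(1)=\gamma$. For $y'\in \mathcal{C}$ near $y_0$, Lemma \ref{lemma continuation} produces the continuations $\gamma^{y'}(s)$, and setting $F(y',s):=H_{f,\gamma^{y'}(s)}(y')$, Corollary \ref{lemma dependence family} applied with the constant family $g\equiv f$, together with compactness of $[0,1]$, ensures that both $F(y',s)$ and the tangent $\partial_s F(y',s)\in T_{F(y',s)}\cW_f^c(y')$ are jointly continuous in $(y',s)$, uniformly in $s$.

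Now for any $y'\in \Gamma_f^1(\mathcal{C})$ close to $y_0$, the curve $s\mapsto F(y',s)$ lies entirely in the one-dimensional $C^1$ submanifold $\cac_f(y')\cap \mathcal{C}$, so $\partial_s F(y',s)\in T_{F(y',s)}\cac_f(y')$ for every $s$. One picks $s_0\in (0,1]$ such that $\partial_s F(y_0,s_0)\neq 0$, which is possible because $s\mapsto F(y_0,s)$ is a $C^1$ curve from $y_0$ to $x_{2k}\neq y_0$ and hence non-constant; by joint continuity, the same holds for all $y'$ in some smaller neighborhood of $y_0$. For $y'\in \Gamma_f^1(\mathcal{C})$ in this neighborhood, $\mathrm{Span}(\partial_s F(y',s_0))=T_{F(y',s_0)}\cac_f(y')$; transporting this direction back to $y'$ via the inverse of the continuously varying differential $D_{y'}H_{f,\gamma^{y'}(s_0)}|_{\cW_f^c(y')}$ yields a direction $e(y')\subset T_{y'}\cac_f(y')$ depending continuously on $y'\in \Gamma_f^1(\mathcal{C})$, which is what was required.

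\textbf{Main obstacle.} The subtle point is choosing $s_0>0$ so that $\partial_s F(y_0,s_0)\neq 0$: since stable and unstable directions commute to first order, $\partial_s F(y_0,0)$ typically vanishes (the displacement of a small us-loop is a second-order ``bracket'' in the leg lengths, which is precisely what makes us-loops interesting and what prevents triviality of $\cac_f$ in the first place). Hence one cannot read off the tangent direction at $s=0$ and must instead extract it from an intermediate value of $s$, then transport back via the holonomy derivative. Once this is handled, the uniform $C^1$-control of holonomies in the base point provided by Corollary \ref{lemma dependence family} supplies all the continuity needed to produce the lamination charts.
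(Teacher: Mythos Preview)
The paper does not give its own proof of this proposition; it is quoted from \cite{HS} (Corollary~2.20 there) as a known result, so there is no in-paper argument to compare against.

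Your approach is correct and is essentially the Horita--Sambarino proof: establish continuity of the tangent line field $y'\mapsto T_{y'}\cac_f(y')$ on $\Gamma_f^1(\mathcal{C})$ via $C^1$-homogeneity along us-loop holonomies, and then build lamination charts from a local transversal. This is exactly the mechanism that Proposition~\ref{lemma var des classes} in the present paper adapts (there one varies $g$ rather than the base point $y'$, but the skeleton is identical). One small citation issue: Corollary~\ref{lemma dependence family} controls the $C^1$-dependence of $H_{f,\gamma^{x,g}}$ only in the starting point $x$ and the map $g$, not in the loop parameter $s$; to justify that $s\mapsto F(y',s)$ is $C^1$ you should invoke Theorem~\ref{cor smooth holonomy maps} (the strong foliations are $C^1$ inside $cu$/$cs$-leaves, hence the individual holonomy factors are jointly $C^1$ in their corner arguments), after which Lemma~\ref{lemme Davi} gives the required joint continuity of $\partial_s F$ in $(y',s)$.
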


\section{Construction of adapted accessibility loops}\label{section adapt loop}

Let $r \geq 2$, and let us consider a partially hyperbolic diffeomorphism $f \in \cPH^r(M)$ with $\dim E_f^c \geq 2$ that is center bunched, dynamically coherent, and plaque expansive.
In this section, given a point $x\in M$, we build suitable loops starting at $x$ which will later be used to construct perturbations to break non-open accessibility classes. The loops which we construct will depend on whether the accessibility class of the point $x$ is already open or not. In fact, although the accessibility class of $x$ is a homogeneous set, when working with specific families of loops with a prescribed number of legs of a certain size, the set of points which we can reach from $x$ moving along these loops may not exhibit the global structure of the accessibility class (for example, if the class of $x$ is open, to be able to reach any point in a neighborhood of $x$, we may need to consider very long accessibility paths instead of local ones), which leads us to the following definitions. 

Fix a subset $\mathscr{S}\subset M$. 
For any $\sigma>0$, we let $\tilde \Gamma_f^0(\mathscr{S},\sigma)$ be the set of all points $x \in \mathscr{S}$ whose center accessibility class is \emph{locally trivial} 
in the following sense: for any $4$ us-loop   $\gamma = [x,x_1,x_2,x_3,x_{4}]$ at $(f,x)$ such that $\ell(\gamma)<10^{-2}\sigma$, we have $x_{4}=x$. We also set $\tilde \Gamma_f^0(\mathscr{S}):=\cup_{\sigma\in (0,1)}\tilde \Gamma_f^0(\mathscr{S},\sigma)$. When $\mathscr{S}=M$, we abbreviate $\tilde \Gamma_f^0(\mathscr{S},\sigma),\tilde \Gamma_f^0(\mathscr{S})$ respectively as $\tilde \Gamma_f^0(\sigma),\tilde \Gamma_f^0$. 

The next lemma explains how to construct \emph{closed} us-loops at points $x$ whose center accessibility class is (locally) one-dimensional; it will be useful later  to show that after a $C^r$-small perturbation, the accessibility class of $x$ can be made open. 
\begin{lemma}\label{eleme lemma}
	There exist $C^2$-uniform constants $\sigma_0=\sigma_0(f)>0$, $K_0=K_0(f)>0$  such that for any $\sigma\in (0,\sigma_0)$,  for any  point $x_0 \in \Gamma_f^1 \setminus \tilde \Gamma_f^0(\sigma)$,  if $\phi=\phi_{x_0}$ is the chart given by Lemma \ref{lemma constr phi}, then for any point $x \in \Gamma_f^1\cap \phi(B(0_{\R^d},\frac{K_0}{10} \sigma))$, there exists a non-degenerate  closed us-loop $\gamma_x=[x,x_1,\dots,x_{9},x]$ at $(f,x)$ such that 
	\begin{enumerate}
		\item $\ell(\gamma_x)<\sigma$;
		\item\label{deuxieme prop} $B(z_1,K_0 \sigma)\cap \{z,z_2,\dots,z_{9}\}=\emptyset$, where $z=\phi^{-1}(x)$, and $z_i=\phi^{-1}(x_i)$, for each integer $i=2,\dots,9$;
		\item   the map $\Gamma_f\cap \phi(B(0_{\R^d},\frac{K_0}{10} \sigma))\ni x \mapsto \gamma_x$ is continuous. 
	\end{enumerate} 
\end{lemma}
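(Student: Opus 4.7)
The plan is to construct $\gamma_x$ as a concatenation $\gamma_0^{x,f}\cdot \eta_x$, where $\gamma_0^{x,f}$ is the continuation at $x$ of a small non-trivial $4$ us-loop at $x_0$ provided by the hypothesis, and $\eta_x$ is a $6$-leg us-closing segment from the endpoint of $\gamma_0^{x,f}$ back to $x$. The existence and continuous dependence of $\eta_x$ on $x$ rest on the one-dimensional structure of $\cac_f(x)$ (via Proposition~\ref{prop lamination}) combined with an implicit-function-theorem argument.

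From $x_0 \notin \tilde{\Gamma}_f^0(\sigma)$ one extracts a non-trivial $4$ us-loop $\gamma_0 = [x_0, a_1, a_2, a_3, a_4]$ at $x_0$ with $\ell(\gamma_0) < 10^{-2}\sigma$ and $a_4 \neq x_0$. By Lemma~\ref{lemma continuation}, for every $x$ in a small neighborhood $\mathcal{O}$ of $x_0$, the continuation $\gamma_0^{x,f} = [x, a_1^x, a_2^x, a_3^x, a_4^x]$ is well-defined and varies continuously in $x$; its endpoint $y(x) := a_4^x$ lies in $\cac_f(x) \cap \cWc_{f, \mathrm{loc}}(x)$ and satisfies $y(x) \neq x$ for $x$ close to $x_0$.

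For $x \in \Gamma_f^1\cap \mathcal{O}$, I build a $6$-leg us-segment $\eta_x = [y(x), b_1^x, \ldots, b_5^x, x]$ whose concatenation with $\gamma_0^{x,f}$ produces a $10$ us-loop at $x$ in the sense of Definition~\ref{def 4 legged}. Taking $b_1^x, \ldots, b_4^x$ as free parameters in the respective strong-stable/strong-unstable leaves (a total of $2 d_u + 2 d_s$ real degrees of freedom) and letting $b_5^x, x$ be forced by the transverse-intersection structure, the closing condition ``endpoint $= x$'' becomes a constraint of codimension $\dim \cWc_{f, \mathrm{loc}}(x) = 2$. Since $\cac_f(x)$ is one-dimensional and, by Proposition~\ref{prop lamination}, varies in a $C^1$-laminar fashion, $y(x)$ and $x$ depend continuously on $x$ along a single $C^1$-arc inside $\cWc_{f, \mathrm{loc}}(x)$. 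The base case $x = x_0$ is handled by exhibiting a particular $\eta_{x_0}$: following $\gamma_0$ by a suitable $6$-leg us-sub-loop at $a_4$ landing at $x_0$, whose existence uses that $\cac_f(x_0)$ is one-dimensional and reachable in the reverse direction from $a_4$. For nearby $x$, an implicit function theorem based on the uniform $C^1$-dependence of holonomies (Corollary~\ref{lemma dependence family}) and on the non-vanishing infinitesimal displacement induced by $\gamma_0$ along $T_x \cac_f(x)$ produces the continuous family $x \mapsto \eta_x$. Setting $\gamma_x := \gamma_0^{x,f}\cdot \eta_x$ yields the desired closed non-degenerate $10$ us-loop at $x$, with $\ell(\gamma_x) < \sigma$ and continuous dependence on $x$.

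The separation condition $(2)$ is verified in the chart $\phi = \phi_{x_0}$ of Lemma~\ref{lemma constr phi}: the coordinate $z_1 = \phi^{-1}(a_1^x)$ lies at distance $\asymp \ell(\gamma_0)$ from $z = \phi^{-1}(x)$ along $E^u_f(x_0)$, while the remaining corners $z_2, \ldots, z_9$ are separated from $z_1$ either by the transversality of the invariant bundles (the $s$-leg from $a_1^x$ to $a_2^x$ moves out of the $E^u_f(x_0)$ direction) or by the coarse geometry of the subsequent part of the loop; choosing $K_0$ smaller than a uniform geometric constant depending only on $f$ then gives the required disjointness uniformly in $\sigma \in (0, \sigma_0)$. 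The main obstacle is the construction of $\eta_x$: the continuation of a single closed loop at $x_0$ need not remain closed at nearby $x$, so the free parameters of $\eta_x$ must be adjusted implicitly. This adjustment is possible because the displacement induced by $\gamma_0$ spans the $1$-dimensional tangent space $T_x \cac_f(x)$ and is transverse to the ``kernel'' of the closing condition, ensuring the implicit function theorem applies; Corollary~\ref{lemma dependence family} then yields the continuous dependence on $x$.
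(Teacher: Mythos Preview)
Your proposal has a genuine gap at the base case: you assert that ``the base case $x = x_0$ is handled by exhibiting a particular $\eta_{x_0}$: following $\gamma_0$ by a suitable $6$-leg us-sub-loop at $a_4$ landing at $x_0$, whose existence uses that $\cac_f(x_0)$ is one-dimensional and reachable in the reverse direction from $a_4$.'' But this is precisely the heart of the lemma, and you do not construct it. Knowing that $a_4 \in \cac_f(x_0)$ only tells you that \emph{some} accessibility sequence connects $a_4$ to $x_0$; it does not give you one with exactly six legs, the correct us-pattern, and length $< \sigma$. The obvious candidate, reversing $\gamma_0$, has the wrong alternation pattern and in any case would make the concatenation degenerate. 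Your implicit-function step, even if it worked, only propagates a solution once one exists; it cannot manufacture the initial $\eta_{x_0}$.

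The paper closes this gap by a different mechanism than an IFT. It first builds a one-parameter family $\{\gamma(t)\}_t$ of $4$ us-loops at $x_0$ whose endpoints $x_4(t)$ sweep out an arc of $\cac_f(x_0)$. It then constructs a separate $6$-leg su-path $\gamma'$ from $x_0$ by first stepping to a nearby point $x_0' \in \cWs_f(x_0)$ and following the continuation of $\gamma(t_0)$ from there, projecting back to $\cWc_f(x_0)$; this lands at a point $x_5'$ strictly inside the swept arc. An intermediate-value argument then produces $\rho'$ with $x_4(\rho') = x_5'$, and the concatenation $\gamma(\rho')\cdot\overline{\gamma'}$ is the desired closed $10$ us-loop. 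For nearby $x$ the same matching is done by continuity of $x \mapsto \rho'(x)$, again via an intermediate-value argument rather than an IFT (note $x \mapsto \cac_f(x)$ is only a $C^0$ lamination here). The stable offset $x_0 \to x_0'$ is also what makes the separation estimate (2) work: the paper uses H\"older continuity of $E_f^u$ to bound $d(x_1', x_1(\rho'))$ from below, which is the delicate pair; your sketch of (2) (``coarse geometry of the subsequent part of the loop'') does not address this.

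A secondary issue: your codimension count is off. Since the endpoint of any us-loop based at $y(x)$ automatically lies in $\cac_f(y(x)) = \cac_f(x)$, the closing condition is a single scalar constraint along the one-dimensional class, not a codimension-$2$ condition in $\cWc_f$. And the phrase ``non-vanishing infinitesimal displacement induced by $\gamma_0$'' is not the relevant derivative: $\gamma_0$ is fixed, and what you would need is non-degeneracy of the endpoint map of $\eta_x$ in the free parameters $b_i$, which you do not verify.
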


\begin{proof}
	
	Fix some small $\sigma>0$, let $x_0 \in \Gamma_f^1\setminus \tilde \Gamma_f^0(\sigma)$, and let $\sigma'\in (\frac{\sigma}{30},\frac{\sigma}{20})$. By definition, there exists a non-degenerate $4$ us-loop $\gamma=[x_0,x_1,x_2,x_3,x_4]$ such that
	\begin{itemize}
		\item $x_1\in \cWu_f(x_0)$, with $\frac{\sigma'}{2}<d_{ \cWu_f}(x_0,x_1)<\sigma'$;
		\item $x_2\in \cWs_f(x_1)$, with $\frac{\sigma'}{2}<d_{ \cWs_f}(x_1,x_2)<\sigma'$;
		\item $x_3:=H_{f,x_2,x_0}^u(x_2)\in \cWu_{f}(x_2,\sigma')\cap \cWcs_{f}(x_0,\sigma')$;
		\item $x_4:=H_{f,x_3,x_0}^s(x_3)\in \cWs_{f}(x_3,\sigma')\cap \cWc_{f}(x_0,\sigma')$, with $x_4\in \cac_f(x)\setminus \{x_0\}$. 
	\end{itemize}   
	\begin{figure}[H]
		\begin{center}
			\includegraphics [width=13.5cm]{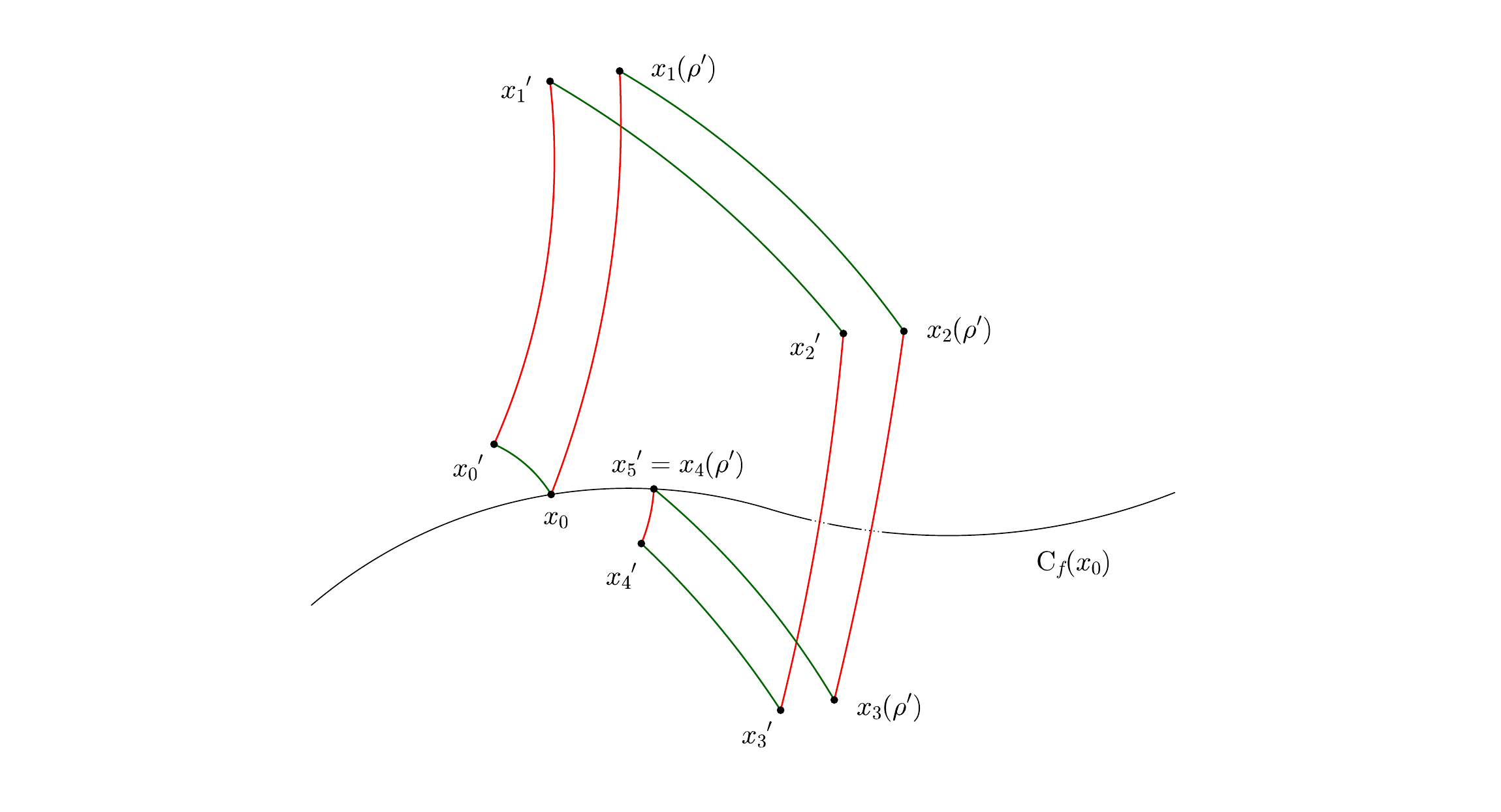}
			\caption{Construction of a non-degenerate closed us-loop.}
		\end{center}
	\end{figure}
	
	
	As $\cac_f(x_0)$ is one-dimensional, for the chart $\phi=\phi_{x_0}$ given by Lemma \ref{lemma constr phi}, we can assume that $\phi^{-1}(\cac_f(x_0,\sigma))=(-\rho_1,\rho_2)\times \{0_{\R^{d-1}}\}\simeq (-\rho_1,\rho_2)$, with $\rho_1,\rho_2>0$, $x_0\simeq 0$, and $x_4\simeq \rho\in (0,\rho_2)$. By varying the size of the legs, we can construct a   continuous family $\{\gamma(t)=[x_0,x_1(t),x_2(t),x_3(t),x_4(t)]\}_{t\in [\frac \rho 2,\rho]}$ of non-degenerate $4$ us-loops at $(f,x_0)$ such that $x_4(t)\simeq t \in [ \frac \rho 2,\rho]$.  
	
	Let us take $x_0'\in \cWs_f(x_0,\frac{\sigma'}{10})\setminus \cWs_f(x_0,\frac{\sigma'}{20})$ and $t_0 \in [\frac \rho 2,\rho)$ close to $\rho$. 
	As in Lemma \ref{lemma continuation}, we let $\gamma^{x_0',f}(t_0)=[x_0',x_1',x_2',x_3',x_4']$ be the natural continuation of $\gamma(t_0)$ starting at $x_0'$ in place of $x_0$.  
	Since $\cWcu_{f}(x_4')=\cWcu_{f}(x_4)=\cWcu_{f}(x_0)$, we can also define 
	$
	\{x_5'\}:=H_{f,x_4',x_0}^u(x_4')\in\cWu_{f,\mathrm{loc}}(x_4')\cap \cWc_{f,\mathrm{loc}}(x_0)
	$, and we set $\gamma':=[x_0,x_0',x_1',\dots,x_5']$.
	In particular, $x_5'\in \cac_f(x,\sigma)$, and 
	$x_5'\simeq \rho'$ for some $\rho' \in (0,\rho)$.  As $x_5'=x_4(\rho')$, we can concatenate the $4$ us-loop  $\gamma(\rho')$ at $(f,x)$ with the $6$ us-loop $\overline{\gamma'}$ at $(f,x_5')$ to produce a closed $10$ us-loop $\gamma_{x_0}:=\overline{\gamma'}\gamma(\rho')$ at $(f,x_0)$. By construction, $\gamma_{x_0}$ is non-degenerate, and we have $\ell(\gamma_{x_0})< \sigma$. 
	
	Let us check that $d(x_1',x_1(\rho'))>\frac{\sigma}{800}$ provided that $\sigma$ is taken sufficiently small. By definition, we have $d_{\cWs_f}(x_0,x_0')\in [\frac{\sigma}{600},\frac{\sigma}{200}]$. Since we work in a $\sigma$-neighbourhood of $x_0$, 	and as the map $z \mapsto E_f^u(z)$ is Hölder continuous (see \cite{PSW1}), we deduce that the distance between the unstable bundles at any two points  $z_1\in \cW_f^u(x_0',\sigma)$, $z_2\in \cW_f^u(x_0,\sigma)$ is at most $\tilde c_1\sigma^{\theta}$, for two $C^2$-uniform constants $\theta=\theta(f)>0$,  $\tilde c_1=\tilde c_1(f)>0$. 
	Integrating the discrepancy along the unstable arcs from $x_0'$ to $x_1'$ and from $x_0$ to $x_1(\rho')$ yields 
	$$
	d(x_1',x_1(\rho'))\geq d(x_0',x_0)- \tilde c_2 \sigma^\theta \times \sigma \geq \frac{\sigma}{600} - \tilde c_2 \sigma^{1+\theta},
	$$
	for some constant $\tilde c_2>0$. We conclude that $d(x_1',x_1(\rho'))\geq \frac{\sigma}{800}$ 
	provided that $\sigma$ is chosen sufficiently small, i.e., $\sigma \in (0,\sigma_0)$,  for some $C^2$-uniform constant $\sigma_0=\sigma_0(f)>0$.  
	Moreover, by construction, $B(x_1,\frac{\sigma}{100})\cap \{x_0,x_2,x_3,x_4\}=\emptyset$. Similarly, we have  $B(x_1',\frac{\sigma}{200})\cap \{x_0,x_0',x_2',x_3',x_4',x_5'\}=\emptyset$ and $B(x_1(\rho'),\frac{\sigma}{200})\cap \{x_0,x_2(\rho'),x_3(\rho'),x_4(\rho')\}=\emptyset$. 
	
	Let us now explain how this construction can be performed for points $x$ near $x_0$ whose center accessibility class is also one-dimensional. By Lemma \ref{lemma continuation}, for any point $x\in M$ which is sufficiently close to $x_0$, and for any $t \in [\frac \rho 2,\rho]$, the us-loop $\gamma(t)$ admits a natural continuation $(\gamma(t))^{x,f}=:\check\gamma^{x}(t)$ that is a $4$ us-loop at $(f,x)$. Moreover, the map $t \mapsto \check\gamma^{x}(t)$ is continuous.  
	Similarly, the $6$ su-loop $\gamma'$ has a natural continuation  $(\gamma')^{x,f}=[x,(x_0')^{x,f},\dots,(x_5')^{x,f}]$. The point $(x_5')^{x,f}$  depends continuously on $x$, hence we can choose a continuous map $\rho'(\cdot)$ such that $\rho'(x_0)=\rho'$ and such that the endpoint of $\check\gamma^{x}(\rho'(x))$ coincides with the endpoint $(x_5')^{x,f}$ of $(\gamma')^{x,f}$. In particular, the continuations $(\gamma')^{x,f}$, $\check\gamma^{x}(\rho'(x))$ depend continuously on $x$. We conclude that the closed $10$ us-loop $\gamma_{x}:=\overline{(\gamma')^{x,f}}\check\gamma^{x}(\rho'(x))$ at $(f,x)$ depends continuously on the point $x$ in a small neighbourhood of $x_0$.  In particular, for $x$ sufficiently close to $x_0$, we have $\ell(\gamma_x)<\sigma$. 
\end{proof}

Actually, given a small center disk $\mathcal{D}$, we will need to construct \emph{closed} us-loops at points $x\in \mathcal{D}$ whose center accessibility class is not open, i.e., either zero or one-dimensional. 
Let us introduce some notation. Fix some small $\sigma>0$. For any $x \in \Gamma_f=\Gamma_f^0 \cup \Gamma_f^1$, we let 
\begin{itemize}
	\item let $\overline{\Gamma}_f(x):=\tilde \Gamma_f^0(\sigma)$, and $n(x):=2$,  if $x \in \tilde \Gamma_f^0(\sigma)$;
	\item otherwise, let $\overline{\Gamma}_f(x):=\Gamma_f^1 \setminus  \tilde\Gamma_f^0(\sigma) $, and $n(x):=5$, if $x\in \Gamma_f^1 \setminus  \tilde\Gamma_f^0(\sigma)$. 
\end{itemize}

\begin{lemma}\label{lem construction families of loops}
	There exist $C^2$-uniform constant $\tilde{K}=\tilde{K}(f)\in (0,1)$, $\tilde{\sigma}=\tilde{\sigma}(f)>0$ such that for any 
	$R_0>0$, for any integer $k_0\geq 1$,\footnote{We will apply this lemma with $k_0=1$ or $2$ in the following.}  
	for any $\sigma\in (0,\tilde{\sigma})$, and for any point $x_0 \in \Gamma_f$ 
	there exists a continuous map $\overline{\Gamma}_f(x_0)\cap \cWc_f(x_0,\tilde{K}\sigma)\ni x \mapsto \gamma^x$ such that $\gamma^x=\{\gamma^x(t) = [x,x_1^x(t),\dots,x_{2n}^x(t)]\}_{t \in [0,1]}$ is a continuous family of $2n$ us-loops at $(f,x)$, with $n:=n(x_0)\in \{2,5\}$,  $\ell(\gamma^x)< \sigma$,  such that $\gamma^x(0)$ is trivial, and for any integer $k \in \{1,\dots,k_0\}$,  $\gamma^x(\frac{k}{k_0})$ is a non-degenerate closed us-loop.  
\end{lemma}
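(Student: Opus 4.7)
The plan is to observe that, since the loops $\gamma^x(k/k_0)$ for $k=1,\dots,k_0$ need not be distinct, it suffices to construct a single non-degenerate closed $2n$-us-loop $\gamma_*^x$ at $(f,x)$ with $\ell(\gamma_*^x)<\sigma/2$, depending continuously on $x\in\overline{\Gamma}_f(x_0)\cap\cWc_f(x_0,\tilde K\sigma)$, together with a continuous path of $2n$-us-loops at $(f,x)$ from the trivial loop to $\gamma_*^x$. One then sets $\gamma^x(t):=\gamma_*^x$ for $t\in[1/k_0,1]$ and uses the path (reparametrized to $[0,1/k_0]$) on $[0,1/k_0]$. Each $\gamma^x(k/k_0)=\gamma_*^x$ is non-degenerate closed, the family length stays below $\sigma$, and the construction is independent of $k_0$. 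The two cases $n\in\{2,5\}$ are then handled separately.

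In Case 1 ($x_0\in\tilde\Gamma_f^0(\sigma)$, $n=2$), I would directly produce the family. The definition of $\tilde\Gamma_f^0(\sigma)$ ensures every $4$-us-loop at $x\in\tilde\Gamma_f^0(\sigma)$ of length less than $10^{-2}\sigma$ is automatically closed. Choose continuous nonvanishing sections $v(\cdot)\in E_f^u$, $w(\cdot)\in E_f^s$ in a small neighborhood of $x_0$; for $\epsilon>0$ depending only on $f$ and $s\in[0,1]$, let $x_1^x(s)$ be the point on $\cWu_{f,\mathrm{loc}}(x)$ at signed arc-length $\epsilon s$ from $x$ in the direction of $v(x)$, let $x_2^x(s)$ be the analogous point on $\cWs_{f,\mathrm{loc}}(x_1^x(s))$ along $w(x_1^x(s))$, set $\{x_3^x(s)\}:=\cWu_{f,\mathrm{loc}}(x_2^x(s))\cap\cWcs_{f,\mathrm{loc}}(x)$ (well-defined by transversality), and $x_4^x(s):=H^s_{f,x_3^x(s),x}(x_3^x(s))$. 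For $\tilde K$ small, the length stays below $10^{-2}\sigma$ for all $s\in[0,1]$ and $x\in\cWc_f(x_0,\tilde K\sigma)\cap\tilde\Gamma_f^0(\sigma)$, so $x_4^x(s)=x$ and the loop is closed; it is non-degenerate for $s>0$. Joint continuity in $(x,s)$ follows from that of the invariant foliations.

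In Case 2 ($x_0\in\Gamma_f^1\setminus\tilde\Gamma_f^0(\sigma)$, $n=5$), I would apply Lemma \ref{eleme lemma} at scale $\sigma$ (shrinking the $\sigma$ in its statement slightly to guarantee $\ell(\gamma_*^x)<\sigma/2$) to produce a closed non-degenerate $10$-us-loop $\gamma_*^x$ continuously depending on $x\in\Gamma_f^1\cap\phi(B(0,K_0\sigma/10))$; choosing $\tilde K<K_0/10$ places $\cWc_f(x_0,\tilde K\sigma)$ inside this set. For the continuous path from trivial to $\gamma_*^x$, I work in the chart $\phi=\phi_x$ of Lemma \ref{lemma constr phi}: I linearly scale (by $s\in[0,1]$) the $\phi^{-1}$-coordinates of the first eight corners of $\gamma_*^x$ toward $0$, and project them back onto the appropriate invariant leaves at each step (so that $x_1^x(s)\in\cWu_{f,\mathrm{loc}}(x)$, $x_2^x(s)\in\cWs_{f,\mathrm{loc}}(x_1^x(s))$, and so on), then set $\{x_9^x(s)\}:=\cWu_{f,\mathrm{loc}}(x_8^x(s))\cap\cWcs_{f,\mathrm{loc}}(x)$ and $x_{10}^x(s):=H^s_{f,x_9^x(s),x}(x_9^x(s))$. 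Items~\eqref{notation 3 3} and \eqref{notation 3 5} of Lemma \ref{lemma constr phi} guarantee that at small scales the invariant directions are close to their linearized counterparts in the chart, so the intersections stay transverse, the holonomies are well-defined, and the resulting loops are valid $10$-us-loops of length uniformly bounded by $\ell(\gamma_*^x)<\sigma/2$.

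The main technical point is to ensure that the interpolation in Case 2 produces bona fide $10$-us-loops jointly continuously in $(x,s)$; this rests on the uniform transversality from Lemma \ref{lemma constr phi}, on the continuous variation of the invariant foliations, of the charts $\phi_x$, and of $\gamma_*^x$ itself. One takes for instance $\tilde\sigma:=\sigma_0(f)$ and $\tilde K:=K_0(f)/20$, both independent of $k_0$.
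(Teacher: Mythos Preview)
Your argument does prove the lemma as literally stated, and your opening observation --- that nothing in the statement forces the loops $\gamma^x(k/k_0)$, $k=1,\dots,k_0$, to differ from one another --- is the point of departure from the paper. You set $\gamma^x(t)\equiv\gamma_*^x$ on $[1/k_0,1]$ and interpolate on $[0,1/k_0]$; the paper instead carefully chooses the parametrisation so that the first corners $x_1^x(k/k_0)$ at the distinct times $k/k_0$ are \emph{pairwise separated} by an amount of order $K_0\sigma/k_0$. That separation is not recorded in the lemma but is what the construction is really for: in Lemma~\ref{lem def regular family and chart} the deformation $V$ is a sum of $k_0$ blocks $B_{1,k}V^\sigma_{1,z_k^u}+B_{2,k}V^\sigma_{2,z_k^u}$, each supported in a ball around the $u$-coordinate $z_k^u$ of the $k$-th first corner; disjointness of these supports is exactly what yields the $2k_0$-dimensional submersion in Corollary~\ref{coro un peu chiant}. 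With your construction the $k_0$ first corners coincide, the supports overlap, and for $k_0=2$ the map $\Phi$ used in Proposition~\ref{prop break zero global} loses its rank. So your route is shorter but severs the link to the next step of the argument.

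Two smaller points. In Case~1 your $\epsilon$ cannot depend ``only on $f$'': the closedness of the loop comes from membership in $\tilde\Gamma_f^0(\sigma)$, which only controls loops of length $<10^{-2}\sigma$, so you need $\epsilon$ proportional to $\sigma$. In Case~2, applying Lemma~\ref{eleme lemma} at scale $\sigma/2$ to get $\ell(\gamma_*^x)<\sigma/2$ requires $x_0\notin\tilde\Gamma_f^0(\sigma/2)$, which is strictly stronger than the hypothesis $x_0\notin\tilde\Gamma_f^0(\sigma)$ (one has $\tilde\Gamma_f^0(\sigma)\subset\tilde\Gamma_f^0(\sigma/2)$); the paper sidesteps this by invoking Lemma~\ref{eleme lemma} at the original scale $\sigma$ and controlling the interpolated lengths directly.
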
  

\begin{proof} 
	Let $R_0>0$, and let $k_0\geq 1$ be  some integer. Let $\sigma_0=\sigma_0(f)>0$,   $K_0=K_0(f)>0$ be as in Lemma \ref{eleme lemma}, 
	and take some small $\sigma \in (0,\min(\overline{h},\sigma_0))$. 
	
	We consider a point $x_0 \in \Gamma_f$ 
	and set $n:=n(x_0)\in \{2,5\}$. Let $\overline{h}=\overline{h}(f)>0$ and $\phi =  \phi_{x_0} \colon (-\overline{h},\overline{h})^{d} \to M$ be given by Lemma \ref{lemma constr phi}. 
	We distinguish between two cases.\\
	
$(1)$ If $x_0 \in \tilde \Gamma_f^0(\sigma)$, then there exists a non-degenerate closed $2n$ us-loop $\tilde \gamma=[x_0,x_1,x_2,x_3,x_0]$ at $(f,x_0)$ with $n=2$, $\ell(\tilde\gamma)<\frac \sigma 2$ and   $B(z_1,K_0\sigma)\cap \{0_{\R^d},z_2,z_3\}=\emptyset$,  where $z_i:=\phi^{-1}(x_i)$, for $i=1,2,3$.  By decreasing continuously the size of the legs of $\tilde\gamma$, we obtain a family of $2n$ us-loops $\{\gamma(t) = [x_0,x_1(t),x_2(t),x_3(t),x_0]\}_{t \in [0,1]}$  at $(f,x_0)$ such that   $\gamma(0)$ is trivial and $\gamma(1)=\tilde \gamma$. Moreover, by choosing the map $t \mapsto \gamma(t)$ carefully, we can ensure 
		that  for any  $k\in \{1,\dots,k_0\}$, it holds $B(z_1(\frac{k}{k_0}),\frac{K_0}{2}\sigma)\cap \{0_{\R^d},z_2(\frac{k}{k_0}),z_3(\frac{k}{k_0})\}=\emptyset$,  where  $z_i(\frac{k}{k_0}):=\phi^{-1}\big((x_i)(\frac{k}{k_0})\big)$, for $i=1,2,3$, and $d\big(z_1(\frac{k}{k_0}),z_1(\frac{k'}{k_0})\big)\geq \frac{K_0}{2k_0}\sigma$,\footnote{For instance, we choose the map $t \mapsto x_1(t)\in \mathcal{W}_{f,\mathrm{loc}}^u(x_0)$ in such a way that $d(z_1(t),z_1(t'))= d(0_{\R^d},z_1)\cdot |t-t'|$, for all $t,t'\in [0,1]$.} for all $k'\in \{1,\dots,k_0\}\setminus \{k\}$.
		
		For any point $x \in \tilde \Gamma_f^0(\sigma)\cap \cWc_{f,\mathrm{loc}}(x_0)$ with $d(0_{\R^d},\phi^{-1}(x)) \leq \frac{K_0}{10}\sigma$, and for  $t \in [0,1]$, let $\gamma^x(t)=[x,x_1^x(t),x_2^x(t),x_3^x(t),x]$ be the closed $2n$ us-loop whose corners are:
		\begin{itemize}
			\item $x_1^x(t):=H_{f,x,x_1(t)}^u(x)\in\cW_{f,\mathrm{loc}}^{u}(x)\cap \cW_{f,\mathrm{loc}}^{cs}(x_1(t))$;
			\item $x_2^x(t):=H_{f,x_1^x(t),x_2(t)}^s(x_1^x(t))\in\cW_{f,\mathrm{loc}}^{s}(x_1^x(t))\cap \cW_{f,\mathrm{loc}}^{cu}(x_2(t))$;
			\item $x_3^x(t):=H_{f,x_2^x(t),x}^u(x_2^x(t))\in\cW_{f,\mathrm{loc}}^{u}(x_2^x(t))\cap \cW_{f,\mathrm{loc}}^{cs}(x)$.
		\end{itemize}
		We let $\gamma^x$ be the continuous family $\gamma^x:=\{\gamma^{x}(t)\}_{t \in [0,1]}$. If $\sigma$ is sufficiently small, then $\ell(\gamma^x)<\sigma$, and for any   $k\in \{1,\dots,k_0\}$, $\gamma^x(\frac{k}{k_0})$ is a non-degenerate closed us-loop at $(f,x)$. Let $z_0^x:=\phi^{-1}(x)$, and  $z_i^x(\frac{k}{k_0}):=\phi^{-1}\big((x_i^x)(\frac{k}{k_0})\big)$, for $i=1,2,3$.  
		Arguing as in the proof of Lemma \ref{eleme lemma},  we  have $B\big(z_1^x(\frac{k}{k_0}),\frac{K_0}{5}\sigma\big)\cap \{z_0^x,z_2^x(\frac{k}{k_0}),z_3^x(\frac{k}{k_0})\}=\emptyset$, and $d\big(z_1^x(\frac{k}{k_0}),z_1^x(\frac{k'}{k_0})\big)\geq \frac{K_0}{5 k_0}\sigma$, for all $k'\in \{1,\dots,k_0\}\setminus \{k\}$, provided that $\sigma$ is sufficiently small. \\
		
$(2)$ Otherwise, we have $x_0 \in \Gamma_f^1 \setminus  \tilde \Gamma_f^0(\sigma)$.  By Lemma \ref{eleme lemma}, after possibly taking $K_0$ smaller, then for any point  $x \in \Gamma_f\cap \cWc_{f,\mathrm{loc}}(x_0)$ such that $d(0_{\R^d},\phi^{-1}(x)) \leq \frac{K_0}{10}\sigma$, there exists a  non-degenerate  closed $2n$ us-loop $\gamma_x=[x,x_1,\dots,x_{2n-1},x]$ at $(f,x)$ with $n=5$,  $\ell(\gamma_x)<\frac \sigma 2$, such that the map $\Gamma_f\cap \phi(B(0_{\R^d},\frac{K_0}{10} \sigma)) \ni x \mapsto \gamma_x$ is continuous, and such that $B(z_1^x,K_0\sigma)\cap \{z_0^x,z_2^x,\dots,z_{2n-1}^x\}=\emptyset$, where $z_0^x:=\phi^{-1}(x)$, and $z_i^x:=\phi^{-1}\big(x_i^x\big)$, for each integer $i=1,\dots,2n-1$.

		By decreasing continuously the size of the legs of $\gamma_x$, keeping $x_{2n-1}(t)\in \cWcs_f(x)$ and letting $x_{2n}(t) :=H_{f,x_{2n-1}(t),x}^s(x_{2n-1}(t))$, we obtain a continuous family $\gamma^x=\{\gamma^x(t) = [x,x_1^x(t),\dots,x_{2n}^x(t)]\}_{t \in [0,1]}$ of $2n$ us-loops at $(f,x)$ such that $\gamma^x(0)$ is trivial, $\gamma^x(1)=\gamma_x$, and $\ell(\gamma^x)<\sigma$.  
		
		Moreover, by choosing carefully  the map $t \mapsto\gamma^x (t)$,  we can ensure that for any integer $k\in \{1,\dots,k_0\}$, $\gamma^x(\frac{k}{k_0})$ is a non-degenerate closed us-loop at $(f,x)$. Indeed, as in the proof of  Lemma \ref{eleme lemma}, we consider a one-parameter family $(\check\gamma^{x}(t))_{t \in [0,1]}$ of $4$ us-loops at $(f,x)$ such that $\check\gamma^{x}(0)$ is the trivial loop and such that the first corners of $\check\gamma^{x}(t)$ and $\check\gamma^{x}(t')$ are distinct for $t\neq t' \in [0,1]$. We can also perform the same  construction as in Lemma \ref{eleme lemma} in order to  obtain a closed $10$-us loop $\gamma^x(t)$ at the times $t=1,\frac{k_0-1}{k_0},\frac{k_0-2}{k_0},\dots,\frac{1}{k_0}$, and such that $B(z_1^x(\frac{k}{k_0}),\frac{K_0}{5}\sigma)\cap \{z_0^x,z_2^x(\frac{k}{k_0}),\dots,z_{2n-1}^x(\frac{k}{k_0})\}=\emptyset$, where we let   $z_i^x(\frac{k}{k_0}):=\phi^{-1}\big((x_i^x)(\frac{k}{k_0})\big)$, for  $i=1,\dots,2n-1$,   and such that $d\big(z_1^x(\frac{k}{k_0}),z_1^x(\frac{k'}{k_0})\big)\geq \frac{K_0}{5 k_0}\sigma$, for all $k'\in \{1,\dots,k_0\}\setminus \{k\}$. 
\end{proof}

 We will also need to construct certain us/su-paths for all points in a small center disk. 
 Take $f\in \mathscr{F}$ and let $\sigma>0$ be small. We assume that for some point $x_0\in M$, and some constant $K>0$, it holds $x \notin\tilde\Gamma_{f}^0(\sigma)$, for all $x \in \cWc_f(x_0,K\sigma)$. Fix $\theta>0$ small. By Proposition \ref{prop lamination} and Proposition \ref{lemma var des classes}, there exists a $C^1$ neighbourhood $\cU$ of $f$ such that for any $g \in \cU^\mathscr{F}$ and for any $x \in \Gamma_{g}^1\cap\cWc_f(x_0,K\sigma)$, it holds
 \begin{align}\label{var clases duex}
 	\Pi_x^c\cac_{g}(x,10\sigma)\subset \mathscr{C}_1,
 \end{align} 
 where $\mathscr{C}_{1}\subset \R^2$ is the cone of angle $\theta$ centered at $0_{\R^2}$, and $\Pi_x^c\colon M \to \R^2$ is the map in Lemma \ref{lemma constr phi} for $f$. 
 In the following, we let $\mathscr{C}:=\big(\R^2  \setminus \mathscr{C}_1\big) \cup \{0_{\R^2}\}$, we denote by $\mathscr{C}_*^+,\mathscr{C}_*^-$ the two components of the set $\mathscr{C}\setminus \{0_{\R^2}\}$, and let $\mathscr{C}^+:=\mathscr{C}_*^+ \cup \{0_{\R^2}\}$, $\mathscr{C}^-:=\mathscr{C}_*^- \cup \{0_{\R^2}\}$. Assume that $\mathscr{C}^+$, resp. $\mathscr{C}^-$ is the top, resp. bottom component in Figure \ref{fig constr loops}. 
 \begin{lemma}\label{lemme de construction des loops}
 	Take $f$, $x_0$, $\sigma$, $\theta$, $\mathcal{U}$ as above, and let $\mathscr{C}$, $\mathscr{C}^+$, and $\mathscr{C}^-$ as defined above. After possibly taking $K$ smaller, there exist continuous maps $\cWc_f(x_0,K\sigma)\ni x \mapsto \gamma_1^x,\gamma_2^x$ such that for any  $x \in \cWc_f(x_0,K\sigma)$, $\gamma_1^x=[x,\alpha_{1}^x,\dots,\omega_{1}^x]$, resp. $\gamma_2^x=[x,\alpha_{2}^x,\dots,\omega_{2}^x]$, is a non-degenerate closed $10$ us-loop, resp. $10$ su-loop at $(f,x)$ such that $\ell(\gamma_1^x),\ell(\gamma_2^x)< \sigma$, such that the endpoints $\omega_1^x=H_{\gamma^x_1}(x)$, $\omega_2^x=H_{\gamma^x_2}(x)$ satisfy 
 	\begin{equation*}
 		\big(\Pi_x^c\omega_1^x,\Pi_x^c\omega_2^x\big)\in \big(\mathscr{C}^+ \times \mathscr{C}^-\big)\cup \big(\mathscr{C}^- \times \mathscr{C}^+\big),
 	\end{equation*}
 	and such that for $\star=1,2$, for some $C^2$-uniform constant $\hat K_\star>0$, we have
 	\begin{equation}\label{distance etre points}
 	B(\alpha_\star^x,\hat K_\star\sigma)\cap \{z\}=\emptyset,\text{ for any corner }z \neq \alpha_\star^x\text{ of }\gamma_\star^x.
 	\end{equation}
 \end{lemma}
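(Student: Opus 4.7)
The plan is to build non-degenerate closed $10$ us/su-loops $\gamma_1^x,\gamma_2^x$ continuously in $x\in\cWc_f(x_0,K\sigma)$ whose endpoints $\omega_\star^x=H_{\gamma_\star^x}(x)\in\cWc_{f,\mathrm{loc}}(x)$ lie on opposite sides of the one-dimensional cone $\mathscr{C}_1$ under the projection $\Pi_x^c$. The construction proceeds by first choosing adapted loops at the base point $x_0$ and then propagating them to nearby $x$ via Lemma~\ref{lemma continuation}; the cone condition is verified case-by-case according to whether $\cac_f(x)$ is one-dimensional (where by~\eqref{var clases duex} every reachable endpoint lies in $\mathscr{C}_1$, so the only placement in $\mathscr{C}^+\cup\mathscr{C}^-$ is $\omega_\star^x=x$, i.e.\ $\Pi_x^c\omega_\star^x=0_{\R^2}\in\mathscr{C}^+\cap\mathscr{C}^-$) or open (where the full two-dimensional local structure allows endpoints strictly inside $\mathscr{C}_*^\pm$).

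At $x_0$, I would first choose a non-degenerate $4$ us-loop $\beta=[x_0,a_1,a_2,a_3,a_4]$ at $(f,x_0)$ of length less than $\sigma/100$ whose endpoint $a_4\in\cWc_{f,\mathrm{loc}}(x_0)$ satisfies $\Pi_{x_0}^c a_4\in\mathscr{C}^+$: in the open case one genuinely selects $\Pi_{x_0}^c a_4\in\mathscr{C}_*^+$ by freely exploring the two-dimensional local class (whose image under $\Pi_{x_0}^c$ fills a neighbourhood of $0_{\R^2}$ and therefore meets $\mathscr{C}_*^+$); in the one-dimensional case only $a_4=x_0$ is admissible by~\eqref{var clases duex}, giving a closed $4$ us-loop as produced by Lemma~\ref{eleme lemma}. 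Extend $\beta$ to a non-degenerate closed $10$ us-loop $\gamma_1^{x_0}$ by concatenating it with a further non-degenerate $6$-legged path whose composition returns to $x_0$, adjusting leg sizes as in the proof of Lemma~\ref{eleme lemma} so that the final endpoint stays in $\mathscr{C}^+$. The separation condition~\eqref{distance etre points} at $x_0$ is guaranteed by demanding each leg length be bounded below by a definite fraction of $\sigma$, exactly as in Lemma~\ref{eleme lemma} and Lemma~\ref{lem construction families of loops}. A symmetric construction with the roles of $\cWs_f,\cWu_f$ exchanged yields $\gamma_2^{x_0}$, a non-degenerate closed $10$ su-loop with $\Pi_{x_0}^c\omega_2^{x_0}\in\mathscr{C}^-$.

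For propagation to $x\in\cWc_f(x_0,K\sigma)$, set $\gamma_\star^x:=(\gamma_\star^{x_0})^{x,f}$, the natural continuation provided by Lemma~\ref{lemma continuation}. Continuity of the chart $\phi_{x_0}$ and the uniform $C^1$-control of the holonomies (Lemma~\ref{lemme Davi} and Corollary~\ref{lemma dependence family}) imply that $\omega_\star^x$ and its projection $\Pi_x^c\omega_\star^x$ depend continuously on $x$; after possibly shrinking $K$, membership in $\mathscr{C}^\pm$ is preserved and~\eqref{distance etre points} holds with slightly smaller constants $\hat K_1,\hat K_2>0$.

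The main obstacle is reconciling the two regimes within a single continuous family: for $x\in\Gamma_f^1$ the endpoint is forced to coincide with $x$, whereas for $x$ with an open class the endpoint sits strictly inside $\mathscr{C}_*^\pm$. Proposition~\ref{prop lamination} (the $C^1$-lamination of $\Gamma_f^1$-points) together with Proposition~\ref{lemma var des classes} (continuous variation of the one-dimensional direction in the $C^1$-topology) ensures that, as $x$ moves within the disk and approaches the $\Gamma_f^1$-stratum from an open-class region, $\omega_\star^x$ tends to the base point along a direction transversal to $\mathscr{C}_1$, so that $\Pi_x^c\omega_\star^x$ stays in the half-cone $\mathscr{C}^\pm$ throughout and passes continuously through $0_{\R^2}$ at the stratum boundary. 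Patching these two regimes yields the required continuous map $x\mapsto\gamma_\star^x$ with all the properties declared in the lemma.
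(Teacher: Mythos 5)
Your proposal has a fatal gap at the propagation step, and a second serious gap in the construction of $\gamma_2$. First, you define the loops only at the base point $x_0$ and then set $\gamma_\star^x:=(\gamma_\star^{x_0})^{x,f}$, claiming that after shrinking $K$ the membership $\Pi_x^c\omega_\star^x\in\mathscr{C}^\pm$ is preserved. It is not. For a point $x\in\Gamma_f^1$ the endpoint of \emph{any} us-loop at $(f,x)$ lies in $\cac_f(x)$, whose projection is contained in the narrow cone $\mathscr{C}_1$ by \eqref{var clases duex}; since $\mathscr{C}^\pm\cap\mathscr{C}_1=\{0_{\R^2}\}$, the required condition forces $\omega_\star^x=x$ \emph{exactly}. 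The continuation of a loop that is closed at $x_0$ is generically not closed at nearby $x$: its endpoint is merely \emph{close} to $x$, hence projects to a small nonzero vector in $\mathscr{C}_1$, which lies outside $\mathscr{C}^+\cup\mathscr{C}^-$ no matter how small $K$ is (the cones are scale-invariant). This is precisely why the paper does not continue a single loop: it continues the whole one-parameter family $\{\gamma^x(t)\}_{t\in[0,1]}$ and, at \emph{each} $x$, re-selects the parameter $t^x$ as the first time the endpoint path enters $\mathscr{C}_1^r$, an intermediate-value argument which forces the endpoint onto the boundary of $\mathscr{C}_1$ (hence into $\mathscr{C}$) uniformly for all $x$, whether the class of $x$ is one-dimensional or open. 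This single uniform construction also makes your ``patching of the two regimes'' unnecessary; as you describe it, the patching is not a proof — you assert without justification that the open-regime endpoint tends to the base point transversally to $\mathscr{C}_1$ as $x$ approaches the $\Gamma_f^1$-stratum, but nothing in your separate open-case construction guarantees this.

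Second, you obtain $\gamma_2^{x_0}$ by ``a symmetric construction with the roles of $\cWs_f,\cWu_f$ exchanged'' and simply declare $\Pi_{x_0}^c\omega_2^{x_0}\in\mathscr{C}^-$. There is no symmetry that places the su-loop's endpoint in the component of $\mathscr{C}$ \emph{opposite} to that of $\gamma_1$'s endpoint; a priori it could land in $\mathscr{C}^+$ as well, or anywhere in $\mathscr{C}_1$. The paper's mechanism is specific: $\gamma_2^x(s)$ is the continuation of the \emph{reversed} loop $\overline{\gamma_1^x(s)}$ started at $x$, whose holonomy is approximately the inverse of that of $\gamma_1^x(s)$, so its endpoint is approximately the reflection of $\omega_1^x(s)$ through $x$ and therefore lands near the opposite boundary ray of $\mathscr{C}_1$, i.e.\ in $\mathscr{C}^-$ when $\omega_1^x\in\mathscr{C}^+$. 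A smaller inaccuracy: in the one-dimensional case you invoke Lemma~\ref{eleme lemma} as producing a non-degenerate closed \emph{4} us-loop, but that lemma only produces closed \emph{10} us-loops — the existence of a non-degenerate closed 4 us-loop at a point with one-dimensional class is exactly what cannot be guaranteed and is the reason the concatenation with a 6-legged return path is needed in the first place.
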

 
 \begin{proof}
 	As $x_0 \in \Gamma_{f}^1\setminus \tilde\Gamma_{f}^0(\sigma)$, there exists a non-degenerate $4$ us-loop $\gamma=[x_0,x_1,x_2,x_3,x_4]$ with  $\ell(\gamma)<\sigma$ and $x_4\in \cac_{f}(x_0)\setminus \{x_0\}$. By shrinking the size of the legs, we construct a continuous family $\{\gamma(t)=[x_0,x_1(t),x_2(t),x_3(t),x_4(t)]\}_{t\in [0,1]}$ of non-degenerate $4$ us-loops at $(f,x_0)$ such that $\gamma(0)$ is trivial and $\gamma(1)=\gamma$.  
 	
 	Assuming that $K>0$ is sufficiently small, the family $\{\gamma(t)\}_{t\in [0,1]}$ extends to a continuous map $\cWc_f(x_0,K\sigma)\ni x\mapsto \gamma^x=\{\gamma^x(t)\}_{t \in [0,1]}$ such that for each $x \in \cWc_f(x_0,K\sigma)$, and for each $t\in [0,1]$, $\gamma^x(t)=[x,x_1^x(t),x_2^x(t),x_3^x(t),x_4^x(t)]$ is a $4$ us-loop at $(f,x)$, and $\gamma^x(0)$ is trivial.  
 	Moreover, up to reparametrization, there exists $\vartheta>0$ such that for each $x \in \cWc_f(x_0,K\sigma)$, it holds
 	\begin{equation}\label{cond encadrement}
 	\left\{
 	\begin{array}{l}
 	\big\{\Pi_x^c\big(x_4^x(t)\big)\big\}_{t \in [\frac 14, \frac 13]}\subset B\Big(\Pi_x^c \Big(x_4\Big(\frac 13\Big)\Big),\frac 12\vartheta\Big)
 	\subset B\big(0_{\R^2},3\vartheta\big),\\
 	\big\{\Pi_x^c\big(x_4^x(t)\big)\big\}_{t \in [\frac 12, \frac 23]}\subset B\Big(\Pi_x^c \Big(x_4\Big(\frac 23\Big)\Big),\frac 12\vartheta\Big)
 	\subset \mathscr{C}_1^r\cap \Big(B\big(0_{\R^2},7\vartheta\big)\setminus B\big(0_{\R^2},4\vartheta\big)\Big),\\
 	\big\{\Pi_x^c\big(x_4^x(t)\big)\big\}_{t \in [\frac 34, 1]}\subset B\big(\Pi_x^c \big(x_4(1)\big),\frac 12\vartheta\big)\subset \mathscr{C}_1^r\cap \Big(B\big(0_{\R^2},10\vartheta\big)\setminus B\big(0_{\R^2},8\vartheta\big)\Big),
 	\end{array}
 	\right. 
 	\end{equation} 
 	denoting by $\mathscr{C}_1^r$ the connected component of $\mathscr{C}_1\setminus \{0_{\R^2}\}$ containing $\Pi_{x_0}^c (x_4)$.  Moreover, after possibly changing the parametrization by $t$, we can also assume that for all $x \in \cWc_f(x_0,K\sigma)$, we have
 	\begin{equation}\label{borne inff}
 	d_{\mathcal{W}_{f}^u}(x_1^x(t),x)\geq \frac{1}{200}\sigma,\quad \text{for all }t\in \Big[\frac 14,1\Big].
 	\end{equation}
 	
 	Now, as in Lemma \ref{eleme lemma}, we take $x_0'\in \cWs_{f}(x_0)$ such that $\frac{1}{200}\sigma \leq d_{\mathcal{W}_{f}^s}(x_0,x_0') \leq \frac{1}{100}\sigma  
 	$. For any $t\in [0,1]$, let $\tilde\gamma(t)=[y_0(t),\dots,y_4(t)]$ be the natural continuation of $\gamma(t)$ starting at $y_0(t)=x_0'$ in place of $x_0$.  
 	As $\cWcu_{f}(y_4(t))=\cWcu_{f}(x_4)=\cWcu_{f}(x_0)$, we may also define 
 	$
 	\{y_5(t)\}:=\cWu_{f,\mathrm{loc}}(y_4(t))\cap \cWc_{f,\mathrm{loc}}(x_0)
 	$, and set $\gamma_*(t):=[x_0,y_0(t),\dots,y_4(t),y_5(t)]$. In the same way, for each point $x \in \cWc_f(x_0,K\sigma)$, we let $\gamma_*^x(t)=[x,y_0^x(t),\dots,y_5^x(t)]$ be the continuation of $\gamma_*(t)$ starting at $x$ given by Lemma \ref{lemma continuation}. 
 	
 	For each $(x,t) \in \cWc_f(x_0,K\sigma)\times [0,1]$, 
 	we denote by $\check\gamma^{x}(t)$ the continuation of $\overline{\gamma_*^x(\frac 23)}$ starting at $x_4^x(t)$ as in Lemma \ref{lemma continuation}, and by concatenation, we obtain the $10$ us-loop $\gamma_1^x(t):=\gamma^{x}(t)\check\gamma^{x}(t)=[x,\alpha_1^x(t),\dots,\omega_1^x(t)]$. 
 	If $\sigma$ is sufficiently small, $x_0'$ is very close to $x_0$, 
 	and by \eqref{cond encadrement}, for any $x \in \cWc_f(x_0,K\sigma)$, it holds 
 	$$
 	\Pi_x^c\Big(\omega_1^x\Big(\frac 14\Big)\Big)\notin \mathscr{C}_1^r,\qquad \Pi_x^c(\omega_1^x(1))\in \mathscr{C}_1^r. 
 	$$ 
 	Since the set $\{\omega_1^x(t)\}_{t \in [0,1]}$ of endpoints is connected, its image under $\Pi_x^c$ has to cross the cone $\mathscr{C}=\mathscr{C}^+ \cup \mathscr{C}^-$. We then let $t^x\in [0,1]$ be the smallest $t \in [0,1]$ such that $\Pi_x^c(\omega_1^x(t))\in \mathscr{C}_1^r$; we also denote by $\gamma_1^x=[x,\alpha_{1}^x,\dots,\omega_{1}^x]$ the $10$ us-loop $\gamma_1^x(t^x)=\gamma^{x}(t^x)\check\gamma^{x}(t^x)$, with $\alpha_1^x:=\alpha_1^x(t^x)$ and  $\omega_1^x:=\omega_1^x(t^x)$. In particular, we have $\Pi_x^c(\omega_1^x)\in \mathscr{C}$; without loss of generality, we assume that $\Pi_x^c(\omega_1^x)\in \mathscr{C}^+$.   
 	
 	\begin{figure}[H]
 		\begin{center}
 			\includegraphics [width=14cm]{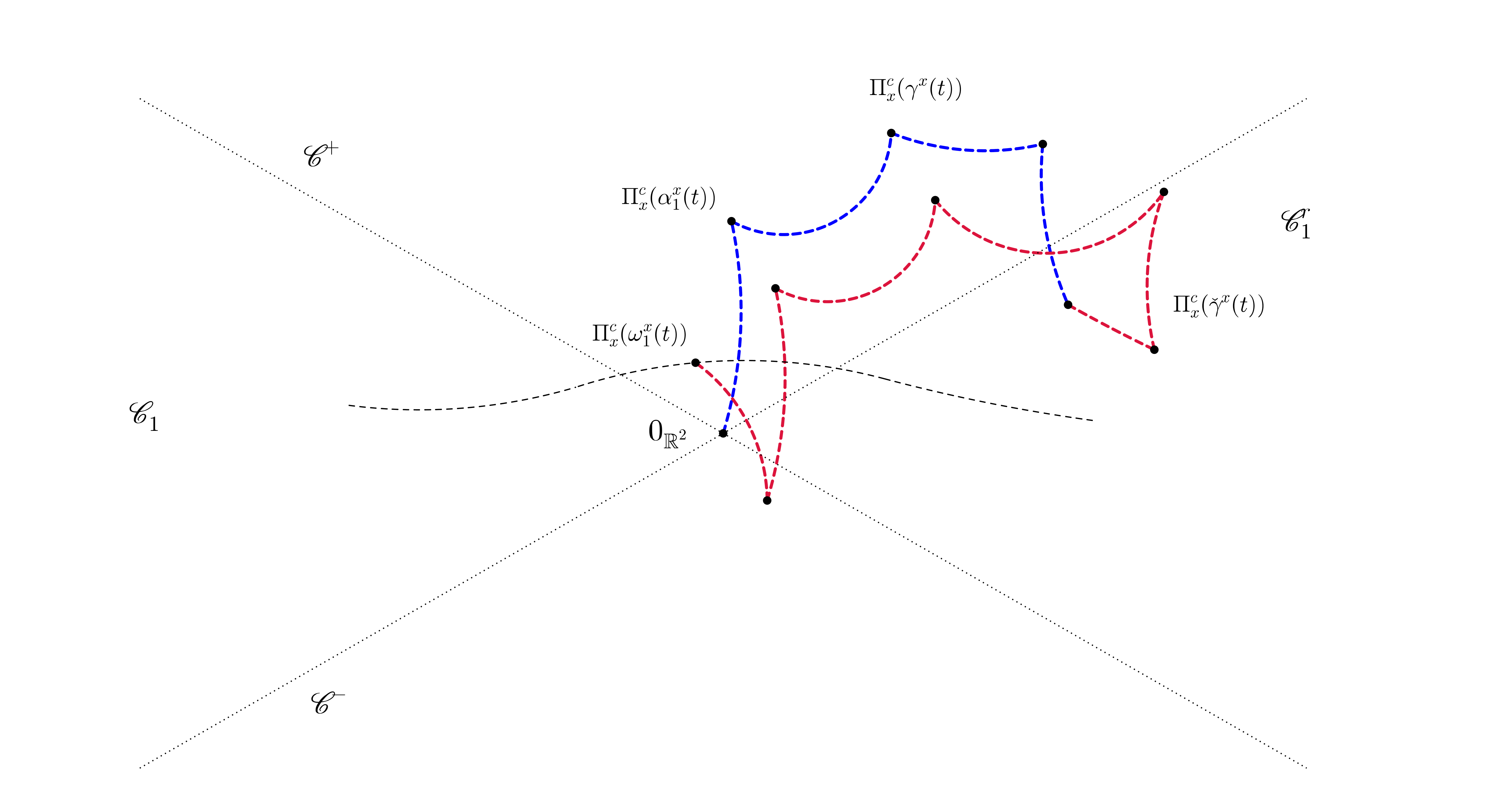}
 			\caption{Construction of the loop $\gamma_1^x$.}\label{fig constr loops}
 		\end{center}
 	\end{figure} 
 	
 	For each $s \in [0,1]$, we also denote by $\gamma_2^x(s)=[x,\alpha_2^x(s),\dots,\omega_2^x(s)]$ the $10$ su-loop obtained by taking the continuation of $\overline{\gamma_1^x(s)}$ starting at $x$ in place of $\omega_1^x(s)$. In this case, arguing as above, we see that for certain values $s\in [0,1]$, it holds $\Pi_x^c(\omega_2^x(s))\in \mathscr{C}^-$; we then let $s^x\in [0,1]$ be the largest $s \in [0,1]$ with that property, 
 	and we define the $10$ su-loop $\gamma_2^x:=\gamma_2^x(s^x)$, with $\gamma_2^x=[x,\alpha_{2}^x,\dots,\omega_{2}^x]$, and $\Pi_x^c(\omega_2^x)\in \mathscr{C}^-$.
 	
 	Besides, \eqref{distance etre points} follows from arguments similar to those in Lemma \ref{eleme lemma}, using \eqref{borne inff}, and since $x_0'$ was chosen such that $d_{\mathcal{W}_{f}^s}(x_0,x_0')\geq \frac{1}{200}\sigma $.   
 \end{proof}

\section{A submersion from the space of perturbations to the phase space}\label{section sumb}

As above, we consider a partially hyperbolic diffeomorphism $f \in \cPH^r(M)$, $r \geq 2$, with $\dim E_f^c \geq 2$ that is center bunched, dynamically coherent, and plaque expansive. In Subsection \ref{random pert}, we recall some general results from \cite{LZ} about random perturbations and the changes those perturbations induce on certain holonomy maps.  In Subsection \ref{construct }, we construct a family of perturbations and show how the results of the previous part can be applied to the particular setting we are interested in. 

\subsection{Random perturbations}\label{random pert}

As in \cite{LZ}, we will use the following suspension construction to show that certain holonomy maps are differentiable with respect to the perturbation parameter. The idea is to incorporate the perturbation parameter into a higher dimensional partially hyperbolic diffeomorphism, which, under some assumptions, is still dynamically coherent and center bunched. 

\begin{defi}[$C^r$ deformation] \label{smooth deform}
	Let $I\geq 1$ be some integer, and let  $\mathcal{U}$ be an open neighbourhood of $\{0\}$ in $\R^{I}$.
	A $C^{r}$ map $\hat{f} \colon \mathcal{U} \times M \to M$ satisfying $\hat{f}(0,\cdot) = f$ and $\hat f(b,\cdot)\in \mathcal{PH}^r(M)$ for all $b \in \mathcal{U}$ is called a \emph{$C^r$ deformation at $f$ with $I$-parameters}.
	We associate with  $\hat{f}$ the suspension map $T(\hat{f})$ defined by 
	\begin{equation}\label{def T}
	T= T(\hat{f}) \colon
	\mathcal{U} \times M \to \mathcal{U} \times M,\quad 
	(b,x) \mapsto (b,\hat{f}(b,x)), 
	\end{equation}
	and we denote $f_b:=\hat{f}(b,\cdot)$. 
	If in addition $f_b \in \mathcal{PH}^{r}(M,\mathrm{Vol})$ for all $ b \in \mathcal{U}$,  then   $\hat{f}$ is said to be \emph{volume preserving}. 
	%
\end{defi}
%
%

\begin{defi}[Infinitesimal $C^r$ deformation]\label{def infini deform}
	
	Let  $I\geq 1$ be an integer. A $C^r$ map $V \colon \R^{I} \times M \to TM$ is called an \emph{infinitesimal $C^{r}$ deformation with $I$-parameters} if 
	
	\begin{enumerate}
		\item for each $B \in \R^{I}$, $V(B,\cdot)$ is a $C^{r}$ vector field on $M$;
		\item for each $x \in M$, $B \mapsto V(B,x)$ is a linear map from $\R^{I}$ to $T_{x}M$.
	\end{enumerate}

\end{defi}
	
	\begin{remark}
			Given $I \geq 1$, an infinitesimal $C^{r}$ deformation $V$ with $I$-parameters, and some small $\epsilon > 0$, we associate with $V$ a $C^{r}$ deformation at $f$ with $I$-parameters, denoted by $\hat{f}$,  which is defined by 
		\begin{equation*}
		\hat{f}(b, x):= \mathcal{F}_{V(b, \cdot)}(1,  f(x)), \quad \forall\, (b,x) \in \mathcal{U} \times M,
		\end{equation*} 
		where $\mathcal{U} = B(0, \epsilon) \subset \R^{I}$ and for any $B \in \mathbb{R}^I$, $\mathcal{F}_{V(B,\cdot)} \colon \R \times  M  \to M$ denotes the $C^r$ flow generated by the vector field $V(B,\cdot)$.
		In this case, we say that $\hat{f}$ is \emph{generated by} $V$. 
		If in addition  $V(B,\cdot)$ is divergence-free for each $B \in \R^{I}$,  then $\hat f$ is  volume preserving  as in Definition \ref{smooth deform}, and we say that $V$ is  \emph{volume preserving}. 
	\end{remark}

\begin{lemma}[Lemma 4.11 in \cite{LZ}]\label{lemma T deformation}
	\label{ph for T} 
	Let $I \geq  1$ be some integer,  let $\mathcal{U} \subset \R^I$ be an open neighbourhood  of $\{0\}$, and let $\hat{f} \colon \mathcal{U} \times M \to M$ be a $C^{r}$ deformation at $f$ with $I$-parameters. If $\mathcal{U}$ is chosen sufficiently small, then 
	the map $T=T(\hat f)$ is a $C^r$ dynamically coherent partially hyperbolic system for some $T$-invariant splitting 
	\begin{equation*}
	T_{(b,x)}(\mathcal{U} \times M)\simeq T_{b}\mathcal{U} \oplus T_{x}M = E^{s}_{T}(b,x) \oplus E^{c}_{T}(b,x) \oplus E^{u}_{T}(b,x),
	\end{equation*}
	for all $(b,x) \in \mathcal{U} \times M$. 
	Moreover, for any $(b,x) \in \mathcal{U} \times M$,   we have
	\begin{equation*}
	E^{*}_{T}(b,x) = \{0\} \oplus E^{*}_{f_b}(x),\qquad  \cW_T^{*}(b,x)=\{b\} \times \cW_{f_b}^{*}(x),\qquad \text{for } *=u,s,
	\end{equation*}
	and
	\begin{equation}\label{prop subspace def}
	E^{c}_{T}(b,x) = Graph(\nu_b(x,\cdot)) \oplus E^{c}_{f_b}(x),
	\end{equation}
	for a unique linear map $\nu_b(x,\cdot) \colon T_b \mathcal{U} \to E^{su}_{f_b}(x):=E^{s}_{f_b}(x)\oplus E^{u}_{f_b}(x)$.
	
	If in addition $f$ is center bunched, then, after reducing the size of $\mathcal{U}$, 
	$u/s$-holonomy maps between local center leaves of $T$ (within distance $1$)  are $C^{1}$ 
	when restricted to some $cu/cs$-leaf, with uniformly continuous, uniformly bounded derivatives. 
\end{lemma}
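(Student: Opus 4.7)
The plan is to construct the hyperbolic splitting for $T$ explicitly, establish dynamical coherence via HPS-persistence, and then inherit the smooth-holonomy statement from Theorem \ref{cor smooth holonomy maps} applied to $T$. First I would set $E^s_T(b,x) := \{0\} \oplus E^s_{f_b}(x)$ and $E^u_T(b,x) := \{0\} \oplus E^u_{f_b}(x)$. These are $T$-invariant because $DT(b,x)(0,v) = (0, Df_b(x)v)$, and for $\mathcal{U}$ small enough each $f_b$ is partially hyperbolic with rates uniformly close to those of $f$, so the contraction/expansion estimates for the strong bundles of $T$ carry over from those of $f_b$. For the central bundle I would make the ansatz
$$E^c_T(b,x) \;=\; \mathrm{Graph}\bigl(\nu_b(x,\cdot)\bigr) \;\oplus\; \bigl(\{0\} \oplus E^c_{f_b}(x)\bigr),$$
with linear maps $\nu_b(x,\cdot) \colon T_b\mathcal{U} \to E^{su}_{f_b}(x)$. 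Writing $DT(b,x)(w,u) = (w, \Phi_b(x)w + Df_b(x)u)$, where $\Phi_b(x)$ is the $b$-derivative of $\hat f$ at $x$, and projecting along $E^c_{f_b}(f_b(x))$ onto $E^{su}_{f_b}(f_b(x))$, $T$-invariance of $E^c_T$ reduces to the cohomological equation
$$\nu_b\bigl(f_b(x), \cdot\bigr) \;=\; \pi^{su}_{f_b}\bigl(\Phi_b(x)\bigr) \;+\; Df_b(x)|_{E^{su}_{f_b}(x)}\,\nu_b(x, \cdot).$$
Since $Df_b$ is hyperbolic on $E^{su}_{f_b}$, this equation admits a unique bounded solution by a graph-transform / geometric-series argument, giving a unique invariant bundle of the stated graph form. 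Domination of $E^s_T, E^u_T$ by $E^c_T$ is automatic because $\Phi_b$ is uniformly small on $\mathcal{U}$ small.

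For the invariant foliations: the $\mathcal{U}$-direction is neutral under $T$, so unique integrability forces every strong leaf to lie in a single slice, yielding $\mathcal{W}^*_T(b,x) = \{b\} \times \mathcal{W}^*_{f_b}(x)$ for $* = s,u$. For the $cu$, $cs$ and $c$ foliations I would rely on the standing hypothesis that $f$ is dynamically coherent and plaque expansive: Theorem \ref{thmplaqueexpansivetostablydc} supplies, for $\mathcal{U}$ sufficiently small, a continuous family of leaf conjugacies $\mathfrak{h}_b \colon M \to M$, with $\mathfrak{h}_0 = \mathrm{Id}$, matching $f$-center leaves to $f_b$-center leaves, and likewise for $cu$ and $cs$. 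I would then define
$$\mathcal{W}^{cu}_T(b,x) \;:=\; \bigcup_{b' \in \mathcal{U}} \{b'\} \times \mathcal{W}^{cu}_{f_{b'}}\bigl(\mathfrak{h}_{b'} \circ \mathfrak{h}_b^{-1}(x)\bigr),$$
and analogously for $\mathcal{W}^{cs}_T$ and $\mathcal{W}^c_T$. $T$-invariance is built in, the subfoliation relations are inherited from $f_b$, and tangency to $E^{cu}_T, E^{cs}_T, E^c_T$ is verified at $(b,x)$ by matching tangent spaces using the uniqueness of $\nu_b$ from the previous step, then propagated by invariance.

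Finally, under center bunching of $f$, the inequality $\max(\nu_T, \hat\nu_T) < \gamma_T\,\hat\gamma_T$ holds for $T$ on $\mathcal{U}$ small: the strong functions satisfy $\nu_T = \nu_{f_b}$, $\hat\nu_T = \hat\nu_{f_b}$, while the central functions $\gamma_T, \hat\gamma_T$ are no worse than $\gamma_{f_b}, \hat\gamma_{f_b}$ because the extra $T_b\mathcal{U}$ factor of $E^c_T$ is neutral, so openness of the bunching inequality transfers bunching from $f$ to $T$ after shrinking $\mathcal{U}$. Applying Theorem \ref{cor smooth holonomy maps} to $T$, its $s/u$-holonomy maps between center leaves, restricted to a $cs/cu$-leaf, are $C^1$ with uniformly continuous derivatives, and uniform boundedness on $\overline{\mathcal{U}} \times M$ follows from continuous dependence of all the estimates on $b$ after one more shrinking of $\mathcal{U}$. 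The step I expect to be the main obstacle is the construction of the $cu$/$cs$/$c$ foliations of $T$: one must check that the candidate foliations defined above are genuinely tangent to $E^{cu}_T$, $E^{cs}_T$, $E^c_T$ rather than to some other $T$-invariant distribution. The key ingredients will be the continuous dependence of the HPS leaf conjugacies $\mathfrak{h}_b$ on $b$, combined with the uniqueness of $\nu_b$ from the first step.
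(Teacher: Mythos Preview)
The paper does not give its own proof of this lemma: it is stated with the citation ``Lemma 4.11 in \cite{LZ}'' and imported wholesale from that reference, so there is no in-paper argument to compare against. Your proposal is therefore a reconstruction rather than a comparison target.

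That said, your outline is essentially sound and follows the natural route. The identification of $E^s_T$, $E^u_T$ with the slice bundles and the graph-transform construction of $\nu_b$ are the standard steps and are correct as written. For dynamical coherence, however, there is a more direct argument than your explicit leaf-conjugacy construction: the unperturbed map $T_0 := \mathrm{Id}_{\mathcal U} \times f$ is manifestly dynamically coherent and plaque expansive on $\mathcal U \times M$ (with the product foliations $\mathcal U \times \mathcal W^{*}_f$), and $T(\hat f)$ is $C^1$-close to $T_0$ once $\mathcal U$ is small. One then invokes Theorem \ref{thmplaqueexpansivetostablydc} for $T_0$ (after the usual care about noncompactness of $\mathcal U$, handled by working on a slightly larger closed ball) to obtain the $cu/cs/c$ foliations of $T$ directly. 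This bypasses the tangency verification you flag as the main obstacle: rather than building candidate leaves and checking they are tangent to $E^{cu}_T$, $E^{cs}_T$, $E^c_T$, HPS persistence hands you invariant foliations tangent to the invariant bundles, and uniqueness of the dominated splitting forces those bundles to be the ones you already constructed. Your route via $\mathfrak h_{b'} \circ \mathfrak h_b^{-1}$ would ultimately produce the same leaves, but the tangency check you anticipate is genuinely delicate (it requires knowing that the leaf conjugacies are $C^1$ along center leaves in the $b$-direction, which is not immediate from Theorem \ref{thmplaqueexpansivetostablydc} alone), whereas the product-map argument avoids it entirely.
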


Let $I\geq 1$ be some integer, let $\mathcal{U}\subset \R^I$ be some small neighbourhood of $\{0\}$ in $\R^I$, let $\hat{f} \colon \mathcal{U} \times M \to M$ be a $C^1$ deformation at $f$ with $I$-parameters, and let $T = T(\hat{f})$.  

\begin{defi}[Lift of a us/su-loop]\label{defii liftt}
	For any point  $x\in M$, for any integer $n \geq 2$, and for any $2n$ us/su-loop $\gamma = [x,x_1, \dots,x_{2n}]$ at $(f,x)$,  we define the \emph{lift} of $\gamma$ as  
	$$
	\hat{\gamma}:= [(0,x),(0,x_1),\dots,(0,x_{2n})].
	$$ 
	In particular, by Lemma \ref{lemma T deformation}, $\hat{\gamma}$ is a $2n$ us/su-loop at $(T,(0,x))$. 
\end{defi}

\begin{remark}
In the following, we will mostly consider us-loops; for that reason, we will state the technical lemmas needed for the proof only for us-loops, but similar results hold for su-loops as well. 
\end{remark}

Similarly to Lemma \ref{lemma continuation}, given a point $x \in M$ and a us-loop at $(f,x)$, we can define a natural continuation for the $C^1$ deformation $\hat f$ with $I$-parameters we consider:
\begin{defi} \label{lift of loop for deformation}
	Let $x \in M$, let $n \geq 2$, we say that $\gamma=\{\gamma(t) = [x,x_1(t),\dots,x_{2n}(t)]\}_{t \in [0,1]}$ is a continuous family of $2n$ us-loops at $(f,x)$ if for each $t \in [0,1]$, $\gamma(t)$ is a $2n$ us-loop, and for each $i=1,\dots,2n$, the map $t \mapsto x_i(t)$ is continuous. Given such a family, for any $t \in [0,1]$, we let $\hat{\gamma}(t)$ be the lift of $\gamma(t)$ as above. Then by continuity, there exists a $C^2$-uniform constant $\hat{\delta}=\hat{\delta}(T,\gamma)> 0$ 
	such that $B(0,\hat{\delta}) \subset \mathcal{U}$, and for any $(b,y,t) \in \cWc_T((0,x),\hat{\delta})\times [0,1]$,   for some constant $\hat{h}=\hat{h}(T,\gamma)> 0$,  the following intersections exist and are unique:
	\begin{itemize}
	\item $\{(b, \hat{x}_1^{b,y}(t))\}:=\cWu_{T,\mathrm{loc}}((b,y), \hat{h})\cap\cWcs_{T,\mathrm{loc}}((0,x_1(t)), \hat{h})$;
	\item $\{(b,  \hat{x}_2^{b,y}(t))\}:=\cWs_{T,\mathrm{loc}}((b,  \hat{x}_1^{b,y}(t)),\hat{h})\cap\cWcu_{T,\mathrm{loc}}((0,x_2(t)),\hat{h})$\dots
	\item \dots $\{(b,  \hat{x}_{2n-1}^{b,y}(t))\}:=\cWu_{T,\mathrm{loc}}((b,  \hat{x}_{2n-2}^{b,y}(t)),\hat{h})\cap\cWcs_{T,\mathrm{loc}}((0,x),\hat{h})$;
	\item $\{(b, \hat{x}_{2n}^{b,y}(t))\}:=\cWs_{T,\mathrm{loc}}((b,  \hat{x}_{2n-1}^{b,y}(t)),\hat{h})\cap\cWc_{T,\mathrm{loc}}((0,x),\hat{h})$.
	\end{itemize}
	We thus have a continuous family of $2n$ us-loops at $(f_b,y)$, denoted by $\{\hat \gamma^{b,y}(t)\}_{t\in [0,1]}$:
	\begin{equation*}
	\hat \gamma^{b,y}(t) := [y,\hat{x}_1^{b,y}(t), \dots, \hat{x}_{2n}^{b,y}(t)], \quad \forall\, t\in [0,1].
	\end{equation*} 
	We define the map
	\begin{equation} \label{widehatpsidefi}
	\widehat{\psi} = \widehat{\psi}(T,x,\gamma) \colon\left\{
	\begin{array}{rcl}
	\cWc_{T}((0,x), \hat{\delta}) \times [0,1] &\to& \cWc_T(0,x),  \\
	(b,y,t)&\mapsto&  H_{T, \hat \gamma 
		(t)}(b,y)=(b, \hat{x}_{2n}^{b,y}(t)).
	\end{array}
	\right.
	\end{equation}
	For any $(b,y) \in \cWc_{T}((0,x), \hat{\delta})$, we thus get a map $\psi = \psi(T,x,\gamma)$: 
	\begin{equation}\label{relationwidehatpsipsifxgamma}
	\psi(b,y,\cdot):=\pi_M\widehat{\psi}(b,y,\cdot)\colon
	[0,1] \to \cWc_{f_b}(y),
	\end{equation}
	where $\pi_M \colon \mathcal{U} \times M \to M$ denotes the canonical projection. 
\end{defi}


\begin{defi} \label{support of deformation} 
	Let $I\geq 1$ be some integer. 
	For any infinitesimal $C^{r}$ deformation with $I$-parameters $V \colon \R^{I}\times M \to T M$, we define
	\begin{align*}
	\mathrm{supp}(V):= \{ x \in M\ \vert\ \exists\, B \in \R^{I} \mbox{ such that } V(B,x) \neq 0 \}.
	\end{align*} 
Given  an open neighbourhood $\mathcal{U}$ of $\{0\}$ in $\R^I$, and a $C^{r}$ deformation at $f$ with $I$-parameters $\hat{f} \colon \mathcal{U} \times M \to M$, we define
\begin{align*}
\mathrm{supp}(\hat{f}):= \{x \in M\ \vert\ \exists\, b \in \mathcal{U} \text{  such that } \hat{f}(b,x) \neq f(x) \}.
\end{align*}
\end{defi}

We introduce the following definitions  in order to control return times of a map to the support of a deformation; they are motivated by the fact that  for very large return times, it is possible to achieve a good control on how certain holonomies change after perturbation. 

\begin{defi} \label{def rec funtion}
For any subsets $A, B \subset M$, and for $*\in \{+,-\}$, we define
\begin{align*}
R(f, A, B) &:= \inf \{ n \geq 0 \ \vert \ f^{n}(A) \cap B \neq \emptyset \mbox{ or } f^{-n}(A) \cap B \neq \emptyset\};\\
R_{*}(f, A, B) &:= \inf \{ n \geq 1 \ \vert\ f^{* n}(A) \cap B \neq \emptyset \}.
\end{align*}
We abbreviate $R(f, A, A)$, $R_{*}(f, A, A)$ respectively as $R(f, A)$, $R_{*}(f, A)$. 
Similarly, for a $C^1$ deformation $\hat{f} \colon \mathcal{U} \times M \to M$ of $f$,  and for $*\in \{+,-\}$,  we set
\begin{align*}
R(\hat{f}, A,B)&:=\inf \{ n \geq 0 \ \vert\ \exists\, b \in \mathcal{U} \mbox{ s.t. } \hat{f}(b,\cdot)^{ n}(A)\cap B\neq \emptyset \mbox{ or } \hat{f}(b,\cdot)^{- n}(A)\cap B\neq \emptyset\},\\
R_{*}(\hat{f}, A, B)&:= \inf \{ n \geq 1 \ \vert\ \exists\, b \in \mathcal{U} \mbox{ s.t. } \hat{f}(b,\cdot)^{* n}(A) \cap B \neq \emptyset \},
\end{align*}
and we abbreviate $R(\hat{f}, A, A)$, $R_{*}(\hat{f}, A, A)$ respectively as $R(\hat{f}, A)$, $R_{*}(\hat{f}, A)$. 
\end{defi}

In the following, most of the time\footnote{Except in Subsection \ref{break trivial global} where deformations  with $4$-parameters are needed.}, we restrict ourselves to the case of  deformations  with $2$-parameters, i.e., we take a small neighbourhood  $\mathcal{U}\subset \R^2$  of $\{0_{\R^2}\}$, we let $\hat{f} \colon \mathcal{U} \times M \to M$ be a $C^1$ deformation at $f$ with $2$-parameters generated by an infinitesimal $C^{1}$ deformation with $2$-parameters $V \colon \R^{2}\times M \to T M$, and we set  $T = T(\hat{f})$.  

\begin{defi}[Adapted deformation]\label{defiadaptedtosth}
	Let $x \in M$, let $n\geq 2$ be some integer, and let  $\gamma= [x,x_{1},\dots,x_{2n}]$ be a $2n$ us-loop or su-loop at $(f,x)$  with $\ell(\gamma)<\sigma$ for some small $\sigma>0$.  Given two constants $C,R_0 > 0$, we say that an infinitesimal  $C^{r}$ deformation $V$  is \emph{adapted to $(\gamma, \sigma, C,R_0)$} if 
	\begin{enumerate}
	\item  $\sigma\|\partial_b\partial_xV\|_{M} + \|\partial_bV\|_{M} < C$;
	\item $R(f, \{z\}, \mathrm{supp}(V)) > R_0$ for $z= x, x_{2}, \dots,x_{2n}$;
	\item $R_{\pm}(f, \{x_1\},  \mathrm{supp}(V)) > R_0$.
	\end{enumerate}
\end{defi}

\begin{prop}[see Proposition 5.6, \cite{LZ}]\label{determinant for smooth deformations} 
	For any  $C, \kappa > 0$,  there exist $C^2$-uniform constants $R_0  = R_0(f, C, \kappa)> 0$ and $\kappa_0 = \kappa_0(f,  C, \kappa) > 0$ such that the following is true. 
	
	Let $x \in M$, let $n\geq 2$ be some integer, and let  $\gamma= [x,x_{1},\dots,x_{2n}]$ be a $2n$ us-loop at $(f,x)$ of length $\sigma>0$ such that there exists an infinitesimal  $C^{r}$ deformation $V$  that is adapted to $(\gamma, \sigma,C, R_0)$. In the following, we denote by  $B = (B_1,B_2)$ an element of $T_{0}\mathcal{U} \simeq \R^{2}$. Assume that 
	for all $z \in \{x,x_2,\dots,x_{2n}\}$, we have
	\begin{equation} \label{determinant 1}
	D_{B}(\pi_cV(B, z)) = 0, 
	\end{equation}
	while 
	\begin{equation} \label{determinant 2}
	\big|\det(B \mapsto D_{B}(\pi_cV(B, x_1)) )\big| > \kappa,
	\end{equation}
	where $\pi_c \colon T M \to E^{c}_{f}$ denotes  the canonical projection.
	
	Then, the map 
	$$
	\Xi \colon\left\{\begin{array}{rcl}
	T_0 \mathcal{U} &\to&  E^c_{f}(x_{2n}),\\
	B &\mapsto& \hat\pi_c D H_{T, \hat{\gamma}}(B+\nu_0(x,B)),
	\end{array} 
	\right.
	$$
	satisfies
	\begin{equation*}
	\det \Xi \geq \kappa_0,
	\end{equation*}
	where $\hat{\gamma}$ is the lift of $\gamma$ for $T$, and $\hat \pi_c\colon E^{c}_{T}(0,x_{2n}) = Graph(\nu_0(x_{2n},\cdot)) \oplus E^{c}_{f}(x_{2n})\to E^{c}_{f}(x_{2n})$ denotes the canonical projection. 
\end{prop}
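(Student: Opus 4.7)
\medskip

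\noindent\emph{Proof proposal.} The plan is to reduce the computation of $\det \Xi$ to understanding how the ``parameter direction'' $T_0\mathcal{U}$ propagates through the holonomy of the lifted loop $\hat\gamma$, and to exploit the fact that, among the corners $x,x_1,\ldots,x_{2n}$, only $x_1$ is allowed to meet $\mathrm{supp}(V)$.

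First, I would decompose $H_{T,\hat\gamma}$ as the composition of the individual $u$- and $s$-holonomies in the suspension $T$ as in \eqref{compo holonomies}, and use the structure $E^c_T(b,z)=\mathrm{Graph}(\nu_b(z,\cdot))\oplus E^c_{f_b}(z)$ from Lemma \ref{lemma T deformation} to track how the tangent vector $B+\nu_0(x,B)\in E^c_T(0,x)$ evolves leg by leg. Since $T$ fixes the $b$-coordinate, at each step the $T_0\mathcal{U}$-component is preserved, so the vector at the $j$-th corner has the form $B+w_j\in T_0\mathcal{U}\oplus T_{x_j}M$; the $M$-component $w_j$ updates by the differential of the corresponding $f$-leg holonomy to leading order, plus a correction term measuring the discrepancy between the $T$-bundles and their $f$-counterparts, i.e. a term of the same order as $\nu_b$ at the relevant corner.

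Next, I would derive a quantitative estimate for the cocycle $\nu_b(z,\cdot)$. By the $DT$-invariance of $E^c_T$, $\nu_b$ satisfies an equation coupling $\nu_b(z,\cdot)$ with $\nu_b(f_b(z),\cdot)$ through $Df_b$ and the variation $\partial_b f_b$; iterating this relation and using the contraction along $E^{su}_f$ coming from partial hyperbolicity shows that whenever $f^n(z)\notin\mathrm{supp}(V)$ for all $|n|\leq R_0$, the norm of $\nu_0(z,\cdot)$ is bounded by $C_1\theta^{R_0}$ for some $\theta=\theta(f)\in(0,1)$ and some $C_1$ depending on $\|\partial_b V\|_M\leq C$. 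Applying this at the corners $z\in\{x,x_2,\ldots,x_{2n}\}$, where adaptedness guarantees both forward and backward iterates avoid $\mathrm{supp}(V)$, yields $\|\nu_0(z,\cdot)\|=O(\theta^{R_0})$. At $x_1$, which may itself lie in $\mathrm{supp}(V)$ but whose non-trivial iterates do not, the same cocycle analysis produces $\nu_0(x_1,B)=D_BV(B,x_1)$ modulo an $O(\theta^{R_0})$ error.

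With these estimates in hand, I would expand $\Xi(B)=\hat\pi_c(B+w_{2n})$ as the $E^c_f(x_{2n})$-component of $w_{2n}-\nu_0(x_{2n},B)$. By \eqref{determinant 1}, the corrections at corners $z\neq x_1$ lie in $E^{su}_f$ up to $O(\theta^{R_0})$ terms, and once transported forward by the next $u/s$-holonomy they project to $E^c_f$ only through an $O(\theta^{R_0})$ piece. The surviving main term comes from the transition at $x_1$: the cocycle jump there contributes $D_B(\pi_cV(B,x_1))\in E^c_f(x_1)$, which is then transported to $E^c_f(x_{2n})$ by the composition of $f$-holonomies along the remaining legs from $x_1$ to $x_{2n}$. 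By Theorem \ref{cor smooth holonomy maps} and center bunching, this transport is a linear isomorphism of center fibers with Jacobian bounded above and below by $C^2$-uniform constants. Combining with \eqref{determinant 2} and choosing $R_0=R_0(f,C,\kappa)$ large enough that the $O(\theta^{R_0})$ errors are dominated by the main term, we conclude $|\det\Xi|\geq\kappa_0$ for some $C^2$-uniform $\kappa_0=\kappa_0(f,C,\kappa)>0$.

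The main obstacle is the cocycle estimate for $\nu_b$ with the correct quantitative dependence, so that contributions at corners $z\neq x_1$ decay geometrically in $R_0$ while the contribution at $x_1$ survives as $D_B(\pi_cV(B,x_1))$. This requires balancing the factor $\|\partial_b V\|$, which is controlled by the adaptedness constant $C$, against the exponential contraction from the dominated splitting, and invoking uniform continuity of holonomy derivatives (via Lemma \ref{lemme Davi}; strong bunching as in Definition \ref{stron bunch} is not needed here, only center bunching). Once this uniform decay is secured, the remaining assembly of $\Xi$ is a composition of uniformly bi-Lipschitz linear maps acting on $T_0\mathcal{U}\simeq\R^2$, and the lower bound on the determinant of the main term is preserved.
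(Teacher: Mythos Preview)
The paper does not give its own proof of this statement; it is quoted from \cite{LZ} (Proposition~5.6 there). Your outline matches the architecture of that argument: decompose $DH_{T,\hat\gamma}$ leg by leg, control the graph map $\nu_0$ via its $DT$-invariance cocycle, isolate the single corner $x_1$ where the deformation is felt, and transport the surviving contribution by $C^1$ center holonomies with $C^2$-uniformly bounded Jacobians.

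There is, however, a genuine slip in how you locate the main term. You claim that the cocycle analysis at $x_1$ yields $\nu_0(x_1,B)=D_BV(B,x_1)$ up to $O(\theta^{R_0})$, and that this is what produces the center piece $D_B(\pi_cV(B,x_1))$. But by \eqref{prop subspace def} the map $\nu_0(z,\cdot)$ takes values in $E^{su}_f(z)$, so it can never carry an $E^c_f$-component. Writing out the invariance relation
\[
\nu_0(f(z),B)=\pi_{su}V(B,f(z))+Df_z|_{E^{su}}\,\nu_0(z,B)
\]
(which follows from $\partial_b|_{b=0}\hat f(b,z)=V(\cdot,f(z))$) and solving the stable and unstable parts separately --- the former by iterating backward, the latter forward --- gives, at $z=x_1$,
\[
\nu_0^u(x_1,B)=O(\theta^{R_0}),\qquad \nu_0^s(x_1,B)=\pi_sV(B,x_1)+O(\theta^{R_0}).
\]
Indeed, the series for $\nu_0^u$ involves only forward iterates $f^n(x_1)$, $n\geq 1$, all outside $\mathrm{supp}(V)$ by adaptedness, while the series for $\nu_0^s$ runs over $f^{-n}(x_1)$, $n\geq 0$, and only the $n=0$ term survives. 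So $\nu_0(x_1,\cdot)$ recovers $\pi_sV$, not $\pi_cV$. The center contribution $\pi_cV(B,x_1)$ enters through a different channel, namely the $b$-derivative of the $T$-holonomy along the legs adjacent to $x_1$: since $f_b^{-1}(x_1)=f^{-1}\big(\mathcal F_{V(b,\cdot)}(-1,x_1)\big)$ genuinely moves with $b$ (precisely because $x_1\in\mathrm{supp}(V)$), the local unstable foliation $\cW^u_{f_b}$ tilts near $x_1$, and linearising this tilt is what injects $\pi_cV(B,x_1)$ into the center component of $w_{2n}$. Once you relocate the main term accordingly, your error-versus-main-term balance via the choice of $R_0$ and the final determinant bound go through as you describe.
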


\subsection{Construction of $C^r$ deformations at $f$}\label{construct }

In the following, we assume  that $\dim E_f^c=2$. 
Recall that $\Pi^c \colon \R^{d} \simeq \R^{2} \times \R^{d_u+d_s} \to \R^{2}$ is the canonical projection, and that $\Pi^c_x$ is the map $\Pi_x^c:= \Pi^c \circ \phi_x^{-1} \colon M \to \R^2$.

\begin{lemma}\label{lem def regular family and chart}
	Let $\tilde{K}=\tilde{K}(f)\in (0,1)$, $\tilde{\sigma}=\tilde{\sigma}(f)>0$ be as in Lemma \ref{lem construction families of loops}. Then, for any 
	$R_0>0$, for any integer $k_0\geq 1$,  
	for any $\sigma\in (0,\tilde{\sigma})$, and for any point $x_0 \in \Gamma_f$ satisfying $R_{\pm}(f,B(x_0,10\sigma))>R_0$,\footnote{\label{footnote recall }Recall Definition \ref{support of deformation} and Definition \ref{def rec funtion}.}  there exists an infinitesimal $C^r$ deformation at $f$ with $2k_0$-parameters $V \colon \R^{2k_0}\times M \to T M$ such that $\mathrm{supp}(V)\subset B(x_0,10\sigma)$,\footref{footnote recall } and there exists a continuous map $\overline{\Gamma}_f(x_0)\cap \cWc_f(x_0,\tilde{K}\sigma)\ni x \mapsto \gamma^x$ such that $\gamma^x=\{\gamma^x(t) = [x,x_1^x(t),\dots,x_{2n}^x(t)]\}_{t \in [0,1]}$ is a continuous family of $2n$ us-loops at $(f,x)$, with $n:=n(x_0)\in \{2,5\}$,  $\ell(\gamma^x)< \sigma$,  such that $\gamma^x(0)$ is trivial, and for any integer $k \in \{1,\dots,k_0\}$, we have:
	\begin{enumerate}
		\item $\gamma^x(\frac{k}{k_0})$ is a non-degenerate closed us-loop;
		\item\label{point un du lemme} 
		$V$ is adapted to $(\gamma^x(\frac{k}{k_0}), \sigma,\widetilde C,R_0)$, for some $C^2$-uniform constant $\widetilde C=\widetilde C(f,k_0)>0$; 
		\item\label{point deux du lemme}
		for any $z \in \big\{x,x_2^x(\frac{k}{k_0}),\dots,x_{2n-1}^x(\frac{k}{k_0})\big\}$, it holds 
		\begin{equation*} 
		D_{B}(\pi_cV(B, z)) = 0, 
		\end{equation*}
		and there exists a $2$-dimensional vector space $E_k\subset \R^{2k_0}$ such that
		\begin{equation*} 
		\Big|\det\Big(E_k \ni B \mapsto D_{B}\big(\pi_cV\big(B, x_1^x\big(\frac{k}{k_0}\big)\big)\big)\Big)\Big| > \tilde \kappa,
		\end{equation*}
		for some $C^2$-uniform constant   $\tilde \kappa=\tilde \kappa(f)>0$, 
		where $\pi_c \colon TM  \to E^{c}_{f}$ denotes  the canonical projection.
	\end{enumerate} 
\end{lemma}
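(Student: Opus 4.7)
My plan is to first apply Lemma~\ref{lem construction families of loops} to produce the continuous family $x \mapsto \gamma^x = \{\gamma^x(t)\}_{t\in [0,1]}$ of $2n$ us-loops with the required non-degeneracy at each $t = k/k_0$. What I will exploit from the proof of that lemma is the geometric separation built into the construction: for every $x$ in the chart and every $k \in \{1,\ldots,k_0\}$, the first corner $x_1^x(k/k_0)$ stays at distance $\geq \frac{K_0}{5}\sigma$ from the other corners of $\gamma^x(k/k_0)$, while for distinct $k \neq k'$ the first corners satisfy $d(x_1^x(k/k_0), x_1^x(k'/k_0)) \geq \frac{K_0}{5k_0}\sigma$ (in the chart trivialization $\phi_{x_0}$). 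These two separations are precisely what permit placing $k_0$ independent localized perturbations.

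Next, I will decompose $\R^{2k_0} = E_1 \oplus \cdots \oplus E_{k_0}$ with each $E_k \simeq \R^2$ and set
$$
V(B, y) = \sum_{k=1}^{k_0} \chi_k(y)\, A_k(y)(B_k), \qquad B = (B_1,\ldots,B_{k_0}),
$$
where $\chi_k$ is a $C^r$ bump supported in a ball $U_k \subset B(x_1^{x_0}(k/k_0), \eta \sigma/k_0)$ and equal to $1$ on a slightly smaller ball, and $A_k$ is a $C^r$ field of linear maps $\R^2 \to TM$ on $U_k$ chosen so that at $x_1^{x_0}(k/k_0)$ the composition $\pi_c \circ A_k \colon \R^2 \to E_f^c(x_1^{x_0}(k/k_0))$ is an isomorphism with determinant $\geq 2\tilde\kappa$ in a normalized local frame. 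The $C^2$-uniform constant $\eta > 0$ is chosen small enough that the balls $U_k$ are pairwise disjoint, each $U_k$ is disjoint from every corner of $\gamma^x(k/k_0)$ other than $x_1^x(k/k_0)$, and $\bigcup_k U_k \subset B(x_0, 10\sigma)$. Shrinking the constant $\tilde K$ from Lemma~\ref{lem construction families of loops} if necessary, the continuity of $x \mapsto \gamma^x$ forces every $x_1^x(k/k_0)$ to lie in the region $\{\chi_k \equiv 1\}$ for all admissible $x$.

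With these choices the stated properties fall out directly. Points outside $\mathrm{supp}(V)$ give $V(\cdot, z) = 0$, hence the vanishing $D_B(\pi_c V(B,z))=0$ at $z \in \{x, x_2^x(k/k_0), \ldots, x_{2n-1}^x(k/k_0)\}$. On $E_k$, the derivative at $x_1^x(k/k_0)$ reduces to $B_k \mapsto \pi_c A_k(x_1^x(k/k_0))(B_k)$; by continuity of the frame and the choice of $\tilde K$, the determinant of this map stays within a factor $2$ of its value at $x_1^{x_0}(k/k_0)$, hence exceeds $\tilde\kappa$. Normalizing $\chi_k$ so that $\|\chi_k\|_{C^0} \leq 1$ and $\|\chi_k\|_{C^1} \leq C_0 k_0/\sigma$ yields $\|\partial_b V\|_M \leq C_1$ and $\|\partial_b \partial_x V\|_M \leq C_1 k_0/\sigma$, so $\sigma\|\partial_b\partial_x V\|_M + \|\partial_b V\|_M < \widetilde C(f,k_0)$. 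Finally, the recurrence conditions in Definition~\ref{defiadaptedtosth} are immediate from the hypothesis $R_\pm(f, B(x_0, 10\sigma)) > R_0$, since every corner of $\gamma^x(k/k_0)$ and all of $\mathrm{supp}(V)$ lie in $B(x_0, 10\sigma)$.

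I do not expect a serious obstacle. The only mildly delicate point is the bookkeeping of scales: the support radius $\eta\sigma/k_0$ of each bump, the radius $\tilde K\sigma$ of the center chart, and the normalization of $A_k$ must be arranged compatibly so that the determinant lower bound $\tilde\kappa$ is genuinely uniform in $x$ and independent of $k_0$, while the $C^1$ bump estimates absorb the factor $k_0$ cleanly into the constant $\widetilde C$ without interfering with the adaptedness condition.
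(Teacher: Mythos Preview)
Your overall strategy matches the paper's: invoke Lemma~\ref{lem construction families of loops} for the loops, then build $V$ as a sum of $k_0$ localized pieces, one per first corner $x_1^{x_0}(k/k_0)$, with $E_k$ the $k$-th copy of $\R^2$ in $\R^{2k_0}$. The gap is in the \emph{shape} of your localization. You support $\chi_k$ in an isotropic ball of radius $\eta\sigma/k_0$ around $x_1^{x_0}(k/k_0)$ and then write ``shrinking $\tilde K$ if necessary, continuity forces every $x_1^x(k/k_0)$ to lie in $\{\chi_k\equiv 1\}$''. But as $x$ ranges over $\cWc_f(x_0,\tilde K\sigma)$ the first corner $x_1^x(k/k_0)=H^u_{f,x,x_1(k/k_0)}(x)$ moves by order $\tilde K\sigma$ in the \emph{center} direction (the unstable holonomy is close to an isometry at this scale). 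Forcing this into a ball of radius $\sim\sigma/k_0$ would require $\tilde K\lesssim 1/k_0$, contradicting the hypothesis that $\tilde K=\tilde K(f)$ is fixed before $k_0$ is chosen.

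The paper resolves this by making the support \emph{anisotropic}: in the chart $\phi_{x_0}$ the bump is a slab of width $\sim\tilde K\sigma$ in the $c$- and $s$-coordinates but width $\sim\tilde K\sigma/k_0$ only in the $u$-coordinate, centered at the unstable coordinate $z_k^u$ of $x_1^{x_0}(k/k_0)$. This works because (i) the variation of $x_1^x(k/k_0)$ with $x$ is transverse to $E^u$ (the point stays on $\cW^{cs}_{f,\mathrm{loc}}(x_1(k/k_0))$, so its $u$-coordinate barely moves), while (ii) the first corners for different $k$ all lie on $\cW^u_f(x_0)$ and are separated precisely in the $u$-direction by $\gtrsim\sigma/k_0$. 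So the slabs are disjoint for different $k$, each contains $x_1^x(k/k_0)$ for all admissible $x$, and the remaining corners fall outside the slabs because they are separated from $x_1^x(k/k_0)$ either in the $u$- or the $s$-coordinate by $\gtrsim K_0\sigma$. Your isotropic balls cannot satisfy all three requirements simultaneously with $\tilde K$ independent of $k_0$.

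A secondary omission: the paper takes the model vector fields $U_j$ to be divergence-free, so that $V(B,\cdot)$ is divergence-free and the deformation $\hat f$ is volume preserving; this is needed for the conservative half of Theorem~\ref{premier theo}. Your $\chi_k A_k$ does not arrange this.
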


\begin{figure}[H]
	\begin{center}
		\includegraphics [width=13.5cm]{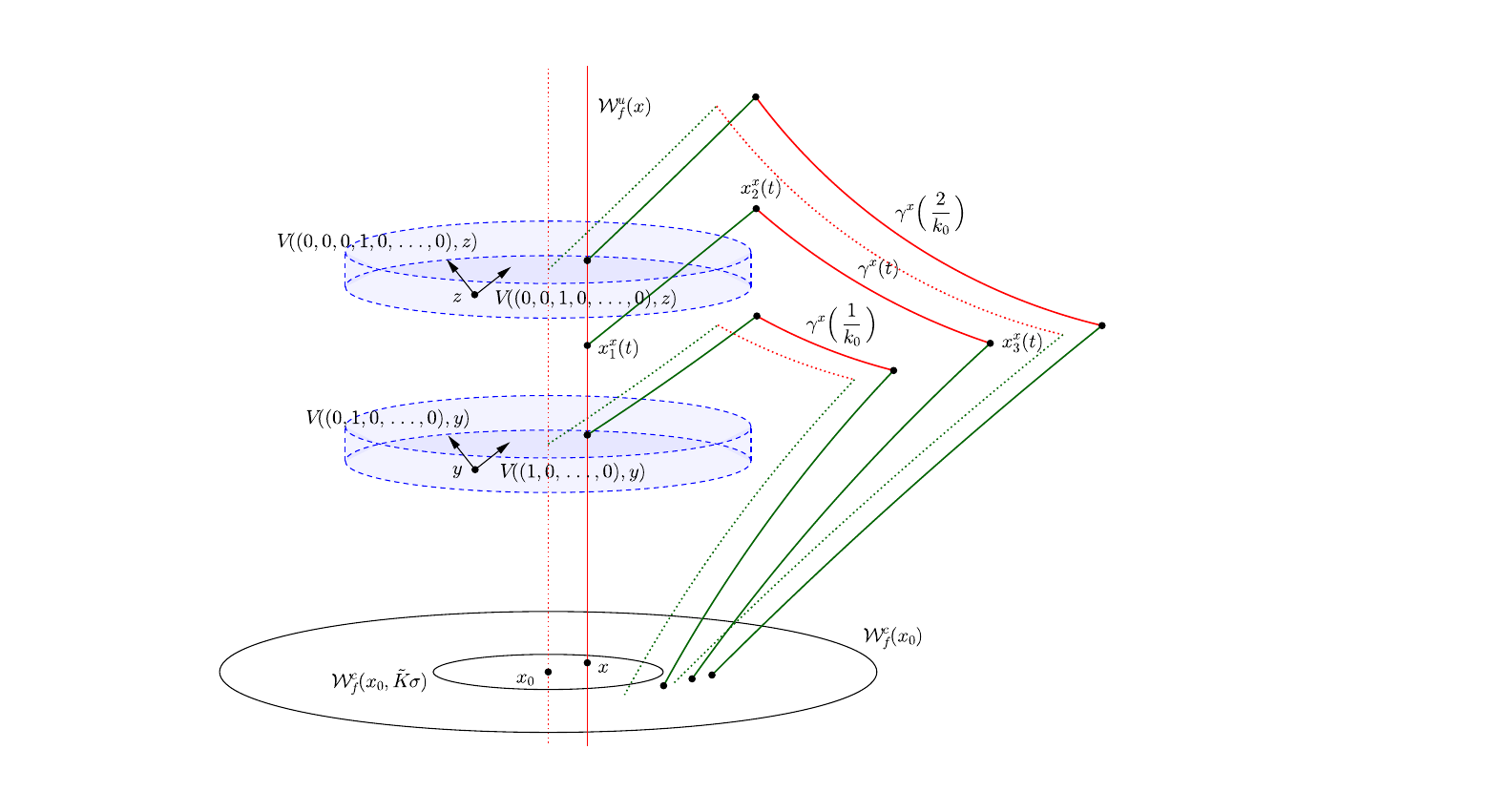}
		\caption{Localization of the perturbations.}
	\end{center}
\end{figure}


Actually, we will apply Lemma \ref{lem def regular family and chart} with $k_0=1$ or $2$ in the following. The construction in the proof of Lemma \ref{lem def regular family and chart} is adapted from \cite{LZ}. 

\begin{proof}[Proof of Lemma \ref{lem def regular family and chart}]
	Let $R_0>0$, and let $k_0\geq 1$ be  some integer. Let $\tilde{K}=\tilde{K}(f)\in (0,1)$, $\tilde{\sigma}=\tilde{\sigma}(f)>0$ be as in Lemma \ref{lem construction families of loops}, 
	and take some small $\sigma \in (0,\tilde{\sigma})$. 
	
	We consider a point $x_0 \in \Gamma_f$ such that $R_{\pm}(f,B(x_0,10\sigma))>R_0$, and set $n:=n(x_0)\in \{2,5\}$. We let $\overline{h}=\overline{h}(f)>0$ and $\phi =  \phi_{x_0} \colon (-\overline{h},\overline{h})^{d} \to M$ be given by Lemma \ref{lemma constr phi}. 
	 
	For  each $x\in \overline{\Gamma}_f(x_0)\cap \cWc_f(x_0,\tilde{K}\sigma)$ we let $\gamma^x=\{\gamma^x(t) = [x,x_1^x(t),\dots,x_{2n}^x(t)]\}_{t \in [0,1]}$ be the continuous family of $2n$ us-loops at $(f,x)$ constructed in Lemma \ref{lem construction families of loops}.

	For each integer $k\in \{1,\dots,k_0\}$, we let $z_k=(z_k^c,z_k^u,z_k^s):=z_1^{x_0}(\frac{k}{k_0})\in (-\overline{h},\overline{h})^d=(-\overline{h},\overline{h})^2 \times (-\overline{h},\overline{h})^{d_u}\times (-\overline{h},\overline{h})^{d_s}$. 
	 We define a collection of functions and vector fields as follows.
	 

   For each $1\leq j\leq 2$, let $U_j \colon (-\frac{1}{5}, \frac{1}{5})^{2} \times (- \frac{1}{5}, \frac{1}{5})^{d_u} \times (-\frac 13, \frac 13)^{d_s} \to \R^{d}$ be a compactly supported $C^{\infty}$ divergence-free vector field such that $U_j$ restricted to $(-\frac{1}{10}, \frac{1}{10})^{2} \times (- \frac{1}{10}, \frac{1}{10})^{d_u}\times (-\frac 15, \frac 15)^{d_s}$ is equal to the constant vector  
   $e_j$, where $e_1:=(1,0,0,\dots,0)\in \R^d$ and $e_2:=(0,1,0,\dots,0)\in \R^d$. 
Moreover, we can assume that $U_j$ satisfies $\|U_j\|_{C^1} < C_{*}$ for some  constant $C_{*} = C_{*}(d) > 0$.

For any $x_c \in \R^2$, $x_u \in \R^{d_u}$, $x_s \in \R^{d_s}$, for any $\lambda_c, \lambda_u, \lambda_s > 0$, and for any $z_c \in \R^2, z_u \in \R^{d_u}, z_s \in \R^{d_s}$, we set:
\begin{equation*}
\Lambda_{x_c, x_u, x_s}^{\lambda_c,\lambda_u,\lambda_s}(z_c, z_u, z_s) = (x_c + \lambda_c z_c, x_u + \lambda_u z_u, x_s + \lambda_s z_s).
\end{equation*}

For any $j \in \{1,2\}$, any $x_u \in \R^{d_u}$, we let $U^{\sigma}_{j,x_u} \colon (-\overline{h}, \overline{h})^d \to \R^d$ be the vector field 
\begin{align*}
U^{\sigma}_{j,x_u} = U_j \big(\Lambda_{0_{\R^2},x_u,0_{\R^{d_s}}}^{\tilde{K}\sigma,\frac{\tilde{K}}{k_0}\sigma,\tilde{K}\sigma}\big)^{-1}.
\end{align*}
The support of $U^{\sigma}_{j,x_u}$ is contained in
\begin{equation*}
\Big(-\frac{\tilde{K}}{5}\sigma, \frac{\tilde{K}}{5}\sigma\Big)^2 \times \Big(x_u +\big(- \frac{\tilde{K}}{5k_0}\sigma,  \frac{\tilde{K}}{5k_0}\sigma\big)^{d_u}\Big) \times \Big(-\frac{\tilde{K}}{3}\sigma,\frac{\tilde{K}}{3}\sigma\Big)^{d_s}.
\end{equation*}
Moreover, for any $z_c \in (-\frac{\tilde{K}}{10}\sigma,\frac{\tilde{K}}{10}\sigma)^2$, for any $z_u \in x_u +(- \frac{\tilde{K}}{10k_0}\sigma,  \frac{\tilde{K}}{10k_0}\sigma)^{d_u}$ and for any $z_s \in \big(- \frac{\tilde{K}}{5}\sigma, \frac{\tilde{K}}{5}\sigma\big)^{d_s}$,  it holds 
\begin{equation*}
U^{\sigma}_{j,x_u}(z_c, z_u, z_s) = e_j.
\end{equation*}
We set
\begin{equation*}
V^{\sigma}_{j,x_u}  := D\phi(U^{\sigma}_{j,x_u}).
\end{equation*} 
The vector field $V_{j,x_u}^\sigma$ is divergence-free and satisfies:
\begin{equation}\label{eq Vitj adapt C1}
\sigma \| \partial_x V^{\sigma}_{j,x_u}\|_{M} + \|V^{\sigma}_{j,x_u}\|_{M} < \widetilde{C}_0,
\end{equation}
for some $C^2$-uniform constant $\widetilde{C}_0=\widetilde{C}_0(f,k_0)>0$.

%
%
%
%

Let $V \colon \R^{2k_0} \times M \to T M$ be the infinitesimal $C^r$ deformation defined as 
\begin{align}
V(B,\cdot)&:=  \big(B_{1,1} V^\sigma_{1,z_1^u}+B_{2,1} V^{\sigma}_{2,z_1^u}\big)+\big(B_{1,2} V^\sigma_{1,z_2^u}+B_{2,2} V^{\sigma}_{2,z_2^u}\big)+\dots+\nonumber\\ 
&\quad +\big(B_{1,k_0-1} V^\sigma_{1,z_{k_0-1}^u}+B_{2,k_0-1} V^{\sigma}_{2,z_{k_0-1}^u}\big)
+\big(B_{1,k_0} V^\sigma_{1,z_{k_0}^u}+B_{2,k_0} V^{\sigma}_{2,z_{k_0}^u}\big),\label{term 2030}  
\end{align}
for all $B =\sum_{k=1}^{k_0} B_{1,k} u_{2k-1} +B_{2,k} u_{2k} \in \R^{2k_0}$, where $(u_i)_{i=1}^{2k_0}$ denotes the canonical basis of $\R^{2k_0}$. 

By definition, the map $V$ is linear in $B$. Moreover, by \eqref{eq Vitj adapt C1}, \eqref{term 2030}, it holds 
\begin{equation} \label{term lip v}
\sigma \|\partial_b\partial_xV\|_{M} + \|\partial_bV\|_{M} <  \widetilde{C},
\end{equation} 
with $\widetilde{C}:=2k_0\widetilde{C}_0>0$. 

As $\ell(\gamma^x)<\sigma$, we have $\mathrm{supp}(V)\subset   B(x_0,10\sigma)$, and for any integer $k \in \{1,\dots,k_0\}$, 
it holds  $x_2^x(\frac{k}{k_0}),x_3^x(\frac{k}{k_0}),\dots,x_{2n-1}^x(\frac{k}{k_0})\in B(x_0,10\sigma)$, for  all $x \in\overline{\Gamma}_f(x_0)\cap \cWc_f(x_0,\tilde{K}\sigma)$. 
Recall that by assumption, we have 
\begin{equation}\label{rec temps}
R_{\pm}(f,B(x_0,10\sigma))>R_0. 
\end{equation}
By \eqref{term lip v} and \eqref{rec temps}, we conclude that $V$ is adapted to $(\gamma^x(\frac{k}{k_0}), \sigma,\widetilde C,R_0)$.

By construction,   for any $z \in \big\{x,x_2^x(\frac{k}{k_0}),\dots,x_{2n-1}^x(\frac{k}{k_0})\big\}$, it holds 
\begin{equation*} 
D_{B}(\pi_cV(B, z)) = 0, 
\end{equation*}
and 
\begin{equation*} 
\Big|\det\Big(E_k \ni B \mapsto D_{B}\big(\pi_cV\big(B, x_1^x\big(\frac{k}{k_0}\big)\big)\big)\Big)\Big| > \tilde \kappa,
\end{equation*}
for some $C^2$-uniform constant   $\tilde \kappa=\tilde \kappa(f)>0$, where $E_k:=\mathrm{Span}(u_{2k-1},u_{2k})\subset \R^{2k_0}$, and   $\pi_c \colon TM  \to E^{c}_{f}$ denotes  the canonical projection.
\end{proof}

\begin{corollary}\label{coro un peu chiant}
	For any integers $k_0\geq 1$, $r \geq 2$, for any $\delta>0$, there exist $C^2$-uniform constants $\tilde{K}_0=\tilde{K}_0(f)\in (0,1)$, $\tilde{\sigma}_0=\tilde{\sigma}_0(f,k_0)>0$, $\tilde R_0=\tilde R_0(f,k_0)>0$ and  $\tilde{\delta}_0=\tilde{\delta}_0(f,r,\delta)>0$ such that for any $\sigma\in (0,\tilde{\sigma}_0)$, for any point $x_0 \in \Gamma_f$ satisfying $R_{\pm}(f,B(x_0,10\sigma))>\tilde R_0$, there exists a $C^r$ deformation $\hat f \colon B(0_{\R^{2k_0}},\tilde{\delta}_0) \times M \to M$ at $f$ with $2k_0$-parameters generated by an infinitesimal $C^r$ deformation $V \colon \R^{2k_0}\times M \to T M$, such that  $\mathrm{supp}(\hat f)\subset B(x_0,10\sigma)$,\footnote{Recall Definition \ref{support of deformation} .} and there exists a continuous map $\overline{\Gamma}_f(x_0)\cap \cWc_f(x_0,\tilde{K}_0\sigma)\ni x \mapsto \gamma^x$, such that
		\begin{enumerate}
		\item 
		 $\gamma^x=\{\gamma^x(t) = [x,x_1^x(t),\dots,x_{2n}^x(t)]\}_{t \in [0,1]}$ is a continuous family of $2n$ us-loops at $(f,x)$ as in Lemma \ref{lem def regular family and chart}, with $n:=n(x_0)\in \{2,5\}$,  $\ell(\gamma^x)< \sigma$,  such that $\gamma^x(0)$ is trivial, and for any integer $k \in \{1,\dots,k_0\}$,  $\gamma^x(\frac{k}{k_0})$ is a non-degenerate closed us-loop;
		\item\label{point deux de la prop} let  $T=T(\hat f)$, let $\psi_x:=\psi(T,x,\gamma^x)$ be the map defined in  
		\eqref{relationwidehatpsipsifxgamma}, let $\Pi_x^c \colon M \to \R^2$ be the map given by Lemma \ref{lemma constr phi}, and for $k \in \{1,\dots,k_0\}$, let $\Phi^{(k)}\colon (b,x)\mapsto\Pi_x^c\psi_x(b,x,\frac{k}{k_0})$; then,  
		  the map 
		\begin{equation*}
		\Phi\colon\left\{
		\begin{array}{rcl}
		B(0_{\R^{2k_0}},\tilde{\delta}_0) \times \Big(\overline{\Gamma}_f(x_0)\cap \cWc_f(x_0,\tilde{K}_0\sigma)\Big) & \to & \R^{2k_0}\\
		(b,x) & \mapsto &\big(\Phi^{(k)}(b,x)\big)_{k=1,\dots,k_0}
		\end{array}
		\right.
		\end{equation*}
		 is continuous; besides, for any $x \in \overline{\Gamma}_f(x_0)\cap \cWc_f(x_0,\tilde{K}_0\sigma)$, $\Phi(\cdot,x)$ is $C^1$, and 
			$$
			|\det D_b|_{b=0}\big(\Phi(\cdot,x)\big)| > \tilde \kappa_0,
			$$
			for some $C^2$-uniform constant   $\tilde \kappa_0=\tilde \kappa_0(f,k_0)>0$; 
		\item\label{point un de la prop}
		$d_{C^r}(f,f_b)< \delta$, for all $b \in B(0_{\R^{2k_0}},\tilde{\delta}_0)$, where $f_b:=\hat f(b,\cdot)\in \mathcal{PH}^r(M)$.
	\end{enumerate} 
\end{corollary}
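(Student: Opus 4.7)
The plan is to combine Lemma \ref{lem def regular family and chart}, the two-parameter submersion result (Proposition \ref{determinant for smooth deformations}), and the continuous dependence of holonomies (Corollary \ref{lemma dependence family}). The only substantive new work beyond invoking these inputs is to upgrade the per-loop submersion, which Proposition \ref{determinant for smooth deformations} delivers in two parameters, into a joint submersion in $2k_0$ parameters; this is accomplished by exploiting a block-diagonal structure that the construction in Lemma \ref{lem def regular family and chart} has been specifically designed to produce.

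First I would apply Lemma \ref{lem def regular family and chart} to produce the infinitesimal $C^r$ deformation $V \colon \R^{2k_0} \times M \to TM$ with $\mathrm{supp}(V) \subset B(x_0, 10\sigma)$, the $2$-dimensional subspaces $E_k \subset \R^{2k_0}$, and the continuous family $x \mapsto \gamma^x$ of us-loops at $(f,x)$ satisfying the requirements of item (1) of the corollary. To obtain a finite deformation I integrate the flow, setting $\hat f(b,x) := \mathcal{F}_{V(b,\cdot)}(1, f(x))$ on a ball $\mathcal{U} := B(0_{\R^{2k_0}}, \tilde\delta_0)$. Because $V$ is linear in $b$ and $C^r$ in $x$, the map $b \mapsto f_b$ is $C^r$-Lipschitz from $\R^{2k_0}$ into $C^r(M)$, so item (3) reduces to choosing $\tilde\delta_0 = \tilde\delta_0(f,r,\delta) > 0$ sufficiently small.

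The heart of the proof is item (2). For each $k \in \{1,\dots,k_0\}$ I would apply Proposition \ref{determinant for smooth deformations} to the closed $2n$ us-loop $\gamma^x(k/k_0)$ with the two-parameter restriction $V|_{E_k}$; items (2) and (3) of Lemma \ref{lem def regular family and chart} verify hypotheses \eqref{determinant 1} and \eqref{determinant 2} respectively, producing a uniform lower bound $|\det \Xi^{(k)}| \geq \kappa_0$, where $\Xi^{(k)}\colon E_k \to E_f^c(x)$ is the centre component of $D_b|_{b=0}\Phi^{(k)}(\cdot,x)|_{E_k}$. To conclude, these $k_0$ estimates on diagonal blocks must assemble into a bound on the full Jacobian; this is the main obstacle. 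It is resolved by the separation built into the construction: for $k' \neq k$, the support of $V|_{E_{k'}}$ lies in a small neighbourhood of $x_1^x(k'/k_0)$ which, by the separation estimates of Lemma \ref{lem construction families of loops} (the $\frac{K_0}{5k_0}\sigma$-separation between the $z_1^x(k/k_0)$), is disjoint from every corner of $\gamma^x(k/k_0)$. Since the proof of Proposition \ref{determinant for smooth deformations} expresses $D_b|_{b=0}\Phi^{(k)}$ solely through the values of $D_B \pi_c V$ at those corners, $D_b|_{b=0}\Phi^{(k)}|_{E_{k'}} = 0$. The full Jacobian of $\Phi$ at $b=0$ is therefore block-diagonal with respect to $\R^{2k_0} = \bigoplus_k E_k$, so
\[
\bigl|\det D_b|_{b=0}\Phi(\cdot, x)\bigr| \;=\; \prod_{k=1}^{k_0} |\det \Xi^{(k)}| \;\geq\; \kappa_0^{k_0} \;=:\; \tilde\kappa_0.
\]

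Finally, joint continuity of $\Phi$ on $\mathcal{U} \times (\overline{\Gamma}_f(x_0) \cap \cWc_f(x_0, \tilde K_0 \sigma))$ follows from Corollary \ref{lemma dependence family} applied to each $f_b$ as $b$ varies in $\mathcal{U}$, together with the continuity of $x \mapsto \gamma^x$ furnished by Lemma \ref{lem def regular family and chart}; the $C^1$-regularity of $\Phi(\cdot,x)$ in $b$ for fixed $x$ is built into Lemma \ref{lemma T deformation} via the $C^1$-regularity of the centre holonomies of the suspension $T = T(\hat f)$. Combined with the determinant lower bound obtained above, this completes item (2) and hence the corollary.
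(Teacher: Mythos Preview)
Your approach is the same as the paper's: invoke Lemma~\ref{lem def regular family and chart} for the loops and the deformation $V$, then Proposition~\ref{determinant for smooth deformations} for the per-block determinant bound, and assemble the blocks. In fact your block-diagonal argument (vanishing of $D_b|_{b=0}\Phi^{(k)}|_{E_{k'}}$ for $k'\neq k$ via the support separation of Lemma~\ref{lem construction families of loops}) is more explicit than the paper's, which simply records $\tilde\kappa_0=(\tfrac12\kappa_0)^{k_0}$ without comment.

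There is, however, one genuine step you skip. Proposition~\ref{determinant for smooth deformations} bounds the determinant of
\[
\Xi_x^{(k)}(B)=\hat\pi_c\, D H_{T,\hat\gamma^x(k/k_0)}\bigl(B+\nu_0(x,B)\bigr),
\]
which takes values in $E_f^c(x)$; this is \emph{not} the same linear map as $D_b|_{b=0}\Phi^{(k)}(\cdot,x)|_{E_k}$, which takes values in $\R^2$ via the chart projection $\Pi_x^c$. The paper computes (see \eqref{je ne sais pas})
\[
D\Phi_x^{(k)}\bigl(0,\,B+\nu_0(x,B)\bigr)=D\Pi_x^c\Bigl[\Xi_x^{(k)}(B)+\nu_0(x,B)\Bigr],
\]
so there is an additive error $D\Pi_x^c\,\nu_0(x,B)$ coming from the graph description \eqref{prop subspace def} of $E_T^c$. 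This error is absorbed using point~\eqref{notation 3 5} of Lemma~\ref{lemma constr phi}: one first chooses $\zeta>0$ small depending only on $\kappa_0$, and then restricts $\sigma<\overline h_\zeta(f)$ so that $\|D\Pi_x^c\,\nu_0(x,\cdot)\|\leq D\zeta$ is small enough for the determinant bound to survive with $\tfrac12\kappa_0$ in place of $\kappa_0$. This is precisely why $\tilde\sigma_0$ in the statement is smaller than the $\tilde\sigma$ coming from Lemma~\ref{lem def regular family and chart}, and why the paper arrives at $(\tfrac12\kappa_0)^{k_0}$ rather than your $\kappa_0^{k_0}$. With this correction your argument goes through.
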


\begin{proof}
	Fix two integers  $k_0\geq 1$ and $r \geq 2$. Let $\tilde K_0:=\tilde K(f)>0$, $\tilde \sigma:=\tilde \sigma(f)>0$, $\widetilde C=\widetilde C(f,k_0)>0$  and   $\tilde \kappa=\tilde \kappa(f)>0$ be the constants given by Lemma \ref{lem def regular family and chart}, and let $\tilde R_0:=R_0(f,\widetilde C,\tilde \kappa)>0$, $\kappa_0:=\kappa_0(f,\widetilde C,\tilde \kappa)>0$ be the constants given by Proposition \ref{determinant for smooth deformations}. 
	Given $\sigma \in (0,\tilde \sigma)$, we consider a point $x_0 \in  \Gamma_f$ such that $R_{\pm}(f,B(x_0,10\sigma),B(x_0,10\sigma))>\tilde R_0$, and set $n:=n(x_0)\in \{2,5\}$.  
	  
	 We let $V \colon \R^{2k_0}\times M \to T M$ be the infinitesimal $C^r$ deformation at $f$ with $2k_0$-parameters given by Lemma \ref{lem def regular family and chart}. Take a point $x \in \overline{\Gamma}_f(x_0)\cap \cWc_f(x_0,\tilde{K}_0\sigma)$ and let $\gamma^x=\{\gamma^x(t) = [x,x_1^x(t),\dots,x_{2n}^x(t)]\}_{t \in [0,1]}$  be the  continuous family of $2n$ us-loops at $(f,x)$ given by Lemma \ref{lem def regular family and chart}. Recall that the map $x \mapsto \gamma^x$ is continuous. Let $\hat f$ be the $C^r$ deformation at $f$ with $2$-parameters generated by $V$, and let $T=T(\hat f)$. By the properties of $V$ in Lemma \ref{lem def regular family and chart}, we have $\mathrm{supp}(\hat f)\subset B(x_0,10\sigma)$.

		For any $t \in [0,1]$, let $\hat{\gamma}^x(t)$ be the lift of $\gamma^x(t)$ for $T$, and  let us denote by $\hat \pi_c\colon E^{c}_{T}(0,x) = Graph(\nu_0(x,\cdot)) \oplus E^{c}_{f}(x)\to E^{c}_{f}(x)$ the canonical projection. Fix an integer $k \in \{1,\dots,k_0\}$. We let $\Xi_x^{(k)}$ be the map defined as
		$$
		\Xi_x^{(k)} \colon\left\{\begin{array}{rcl}
		\R^2\simeq E_k &\to&  E^c_{f}(x),\\
		B &\mapsto& \hat\pi_c D H_{T, \hat{\gamma}^x(\frac{k}{k_0})}(B+\nu_0(x,B)). 
		\end{array} 
		\right.
		$$
		By points \eqref{point un du lemme}-\eqref{point deux du lemme} of Lemma \ref{lem def regular family and chart}, and by Proposition \ref{determinant for smooth deformations},  it holds 
		\begin{equation}\label{eqaution controlant det}
		|\det \Xi_x^{(k)} |\geq \kappa_0.
		\end{equation}
		
		Let $\widehat{\psi}_x:=\widehat{\psi}(T,x,\gamma^x)$ and $\psi_x:=\pi_M \widehat{\psi}_x$ be the maps defined in \eqref{widehatpsidefi}-\eqref{relationwidehatpsipsifxgamma}, and let $\Pi_x^c \colon M \to \R^2$ be the map given by Lemma \ref{lemma constr phi}. Let $\hat \delta>0$ be such that $\hat \delta<\hat \delta(T,\gamma^x)$ for all $x \in \overline{\Gamma}_f(x_0)\cap \cWc_f(x_0,\tilde{K}_0\sigma)$, with $\hat \delta(T,\gamma^x)>0$ as in  Definition \ref{lift of loop for deformation}.  
		Let 
		\begin{equation*}
		\Phi^{(k)} \colon \left\{
		\begin{array}{rcl}
		\cWc_T((0,x_0),\hat{\delta}) &\to&  \R^{2}, \\
		(b,x)&\mapsto& \Pi_x^c\psi_x(b,x,\frac{k}{k_0}).
		\end{array}
		\right.
		\end{equation*}
		As $x \mapsto \gamma^x$ is continuous, the maps $x \mapsto \psi_x$ and $\Phi^{(k)}$ are continuous as well. 
		
		For each $x \in \overline{\Gamma}_f(x_0)\cap \cWc_f(x_0,\tilde{K}_0\sigma)$, and for each $B \in \R^2\simeq E_k$, we have
		\begin{align}
		D\Phi_x^{(k)}(0,B+\nu_{0}(x, B)) &= D\Pi_x^c\pi_M D  H_{T, \hat{\gamma}^x(\frac{k}{k_0})}(B + \nu_{0}(x, B)) \nonumber\\
		&= D\Pi_x^c\Big[\hat \pi_c D   H_{T, \hat{\gamma}^x(\frac{k}{k_0})}(B + \nu_{0}(x, B)) + \nu_{0}(x, B)\Big],\label{je ne sais pas}
		\end{align}
		where $\Phi_x^{(k)}:=\Phi^{(k)}(\cdot,x)$, and $\pi_M \colon \R^2 \times M \to M$ denotes the projection onto the second coordinate. 
		
		By Lemma \ref{lemma constr phi} 
		 there exists a constant $D>0$   such that for $\zeta > 0$ small, if $\sigma\in (0,\overline{h}_{\zeta}(f))$,  then  for any $x \in \overline{\Gamma}_f(x_0)\cap \cWc_f(x_0,\tilde{K}_0\sigma)$ and for any $B \in \R^{2}$, it holds 
		\begin{equation}\label{term je ne sais}
		\|D\Pi_x^c\nu_{0}(x, B)\| \leq D\zeta  \|B\|.
		\end{equation}
		If $\zeta>0$ is sufficiently small (depending only on $\kappa_0$),  then for any $\sigma\in (0,\tilde \sigma_0)$, with $\tilde \sigma_0:=\min(\tilde \sigma,\overline{h}_{\zeta}(f))>0$, and for any $x \in \overline{\Gamma}_f(x_0)\cap \cWc_f(x_0,\tilde{K}_0\sigma)$, by \eqref{eqaution controlant det}-\eqref{je ne sais pas}-\eqref{term je ne sais}, we deduce that
		\begin{equation*}
		\big|\det D_b|_{b=0}\big( \Phi_x^{(k)}|_{E_k}\big)\big|> \frac 12 \kappa_0,
		\end{equation*}
		which concludes the proof of point \eqref{point deux de la prop}, for $\tilde \kappa_0:=\Big(\frac 12 \kappa_0\Big)^{k_0}>0$. 
		
		Finally, point \eqref{point un de la prop} is a direct observation. 
\end{proof}

\section{Local accessibility}

Let us fix an integer $r \geq 2$, and let us consider $f \in \mathscr{F}$, where as before, $\mathscr{F}\subset \mathcal{PH}_*^r(M)$ is the set of  $C^r$ dynamically coherent, plaque expansive, partially hyperbolic diffeomorphisms with two-dimensional center, which satisfy some strong bunching condition as in Definition \ref{stron bunch}. 

In this part, we show that  it is possible to make the accessibility class of any non-periodic point open by a $C^r$-small perturbation. First, we explain how to break trivial accessibility classes, and then, we show how to open one-dimensional accessibility classes, based on some transversality arguments.

\subsection{Breaking trivial accessibility classes}\label{break trivial}

\begin{prop}\label{prop break zero}
	For any non-periodic point $x_0 \in M$,  for any  $\delta>0$, and for any $\sigma>0$,  there exists a partially hyperbolic diffeomorphism $g \in \mathscr{F}$ such that $d_{C^r}(f,g)< \delta$ and such that $x_0 \notin\tilde \Gamma_g^0(\sigma)$; in particular, the center accessibility class $\mathrm{C}_g(x_0)$ is at least one-dimensional. 
\end{prop}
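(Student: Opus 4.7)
The plan is to split on whether $x_0$ already lies outside $\tilde\Gamma_f^0(\sigma)$ or not. In the first case one may take $g = f$, since by assumption $f \in \mathscr{F}$, $d_{C^r}(f,f) = 0$, and the conclusion $x_0 \notin \tilde\Gamma_f^0(\sigma)$ already holds. From now on I assume $x_0 \in \tilde\Gamma_f^0(\sigma)$, in which case $n(x_0) = 2$ and $\overline{\Gamma}_f(x_0) = \tilde\Gamma_f^0(\sigma)$. I will also use the elementary inclusion $\tilde\Gamma_f^0(\sigma) \subset \tilde\Gamma_f^0(\sigma')$ for every $\sigma' \in (0,\sigma]$: a $4$ us-loop of length less than $10^{-2}\sigma'$ is in particular of length less than $10^{-2}\sigma$, hence closes if $x_0 \in \tilde\Gamma_f^0(\sigma)$.

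The core of the argument is an application of Corollary~\ref{coro un peu chiant} with $k_0 = 1$. Since $x_0$ is non-periodic, for the constants $\tilde\sigma_0 = \tilde\sigma_0(f,1)$ and $\tilde R_0 = \tilde R_0(f,1)$ produced by that corollary, I first pick $\sigma' \in (0, 10^{-3}\sigma)$ with $\sigma' < \tilde\sigma_0$ small enough that $R_\pm(f, B(x_0, 10\sigma')) > \tilde R_0$. Applying the corollary with this $\sigma'$ and perturbation size $\delta$ yields a $C^r$ deformation $\hat f \colon B(0_{\R^{2}}, \tilde\delta_0) \times M \to M$ supported in $B(x_0,10\sigma')$, a continuous family $\gamma^{x_0} = \{\gamma^{x_0}(t)\}_{t \in [0,1]}$ of $4$ us-loops at $(f,x_0)$ with $\ell(\gamma^{x_0}) < \sigma'$, $\gamma^{x_0}(0)$ trivial and $\gamma^{x_0}(1)$ a non-degenerate closed $4$ us-loop (closed at $x_0$ because $x_0 \in \tilde\Gamma_f^0(\sigma') = \overline{\Gamma}_f(x_0)$), together with a continuous map $b \mapsto \Phi^{(1)}(b,x_0) \in \R^{2}$ whose differential at $b=0$ has determinant bounded below in absolute value by $\tilde\kappa_0 > 0$.

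By construction $\Phi^{(1)}(b,x_0)$ equals $\Pi_{x_0}^c$ applied to the endpoint of the natural continuation of $\gamma^{x_0}(1)$ for the perturbed map $f_b := \hat f(b,\cdot)$ started at $x_0$; since $\gamma^{x_0}(1)$ is closed at $x_0$, $\Phi^{(1)}(0,x_0) = 0_{\R^{2}}$. The inverse function theorem then provides $b \in B(0_{\R^{2}}, \tilde\delta_0) \setminus \{0\}$ arbitrarily close to the origin with $\Phi^{(1)}(b,x_0) \neq 0_{\R^{2}}$. Setting $g := f_b$, the continuation of $\gamma^{x_0}(1)$ for $g$ is then a non-degenerate $4$ us-loop at $(g,x_0)$ of length less than $\sigma' < 10^{-2}\sigma$ whose endpoint differs from $x_0$; this precisely says $x_0 \notin \tilde\Gamma_g^0(\sigma)$. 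Moreover, varying $t$ continuously through the perturbed family joins $x_0$ to this endpoint inside $\mathrm{Acc}_g(x_0) \cap \mathcal{W}_g^c(x_0,\varepsilon)$ for $\varepsilon$ suitably chosen, so the endpoint lies in $\mathrm{C}_g(x_0,\varepsilon)$, which is therefore non-trivial and by Theorem~\ref{theorem structure} at least one-dimensional.

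The step that demands care is checking $g \in \mathscr{F}$. Strong bunching in the sense of Definition~\ref{stron bunch} is a $C^1$-open condition, and by Theorem~\ref{thmplaqueexpansivetostablydc} the conjunction of dynamical coherence and plaque expansiveness is preserved under small $C^1$ perturbations. Since $d_{C^r}(f,f_b) \to 0$ as $b \to 0$ by point~(3) of Corollary~\ref{coro un peu chiant}, shrinking $\|b\|$ further if necessary ensures both $g \in \mathscr{F}$ and $d_{C^r}(f,g) < \delta$, completing the argument.
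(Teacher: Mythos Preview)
Your proof is correct and follows essentially the same approach as the paper's: apply Corollary~\ref{coro un peu chiant} with $k_0=1$ to obtain a two-parameter deformation for which the endpoint map $b\mapsto \Phi^{(1)}(b,x_0)$ is a local diffeomorphism at $0$, then pick $b\neq 0$ small so that the perturbed $4$ us-loop fails to close, forcing $x_0\notin\tilde\Gamma_g^0(\sigma)$. Your treatment is in fact slightly more explicit than the paper's in two respects: you handle the quantifier ``for any $\sigma>0$'' by passing to a smaller $\sigma'$ via the monotonicity $\tilde\Gamma_f^0(\sigma)\subset\tilde\Gamma_f^0(\sigma')$, and you spell out why $g\in\mathscr{F}$ (openness of the strong bunching condition together with Theorem~\ref{thmplaqueexpansivetostablydc}).
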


\begin{proof}
	Take a non-periodic point $x_0 \in M$. Fix some small $\delta>0$, let $k_0:=1$, and let 
	$\tilde{\sigma}_0=\tilde{\sigma}_0(f,1)>0$, $\tilde R_0=\tilde R_0(f,1)>0$ and  $\tilde{\delta}_0=\tilde{\delta}_0(f,r,\delta)>0$ be the constants given by Corollary \ref{coro un peu chiant}. As $x_0$ is non-periodic, then for $\sigma\in (0,\tilde \sigma_0)$ sufficiently small, it holds $R_{\pm}(f,B(x_0,10\sigma))>\tilde R_0$. Assume that $x_0 \in \tilde \Gamma_f^0(\sigma)$ (otherwise there is nothing to prove). 
	
	By Corollary \ref{coro un peu chiant}, for $n:=n(x_0)=2$, there exist a continuous family 
	\begin{equation}\label{famille gamma zero}
	\gamma=\gamma^{x_0}=\{\gamma(t) = [x_0,x_1(t),x_2(t),x_3(t),x_0]\}_{t \in [0,1]}
	\end{equation} 
	of $4$ us-loops at $(f,x_0)$ such that $\ell(\gamma)< \sigma$, $\gamma(0)$ is trivial, $\gamma(1)$ is a non-degenerate closed $4$ us-loop, a $C^r$ deformation $\hat f \colon B(0_{\R^{2}},\tilde{\delta}_0) \times M \to M$ at $f$ with $2$-parameters, so that  $\mathrm{supp}(\hat f)\subset B(x_0,10\sigma)$, and such that
	 the map 
	\begin{equation}\label{def Phi}
	\Phi_{x_0}\colon  B(0_{\R^{2}},\tilde{\delta}_0)\ni b\mapsto  \Pi_x^c\psi(b,x_0,1)
	\end{equation}
	is $C^1$ and satisfies
	\begin{equation}\label{submerssion}
	|\det D_b|_{b=0}\Phi_{x_0}| > \tilde \kappa_0,
	\end{equation}
	for some $C^2$-uniform constant   $\tilde \kappa_0=\tilde \kappa_0(f,1)>0$. 
	Recall that in \eqref{def Phi}, $\Pi^c_x\colon M \to \R^2$ is the map defined in Lemma \ref{lemma constr phi}, $T=T(\hat f)$,  and $\psi=\psi(T,x_0,\gamma^{x_0})$. 
	
	Moreover, by Definition \ref{lift of loop for deformation}, for all $b \in B(0_{\R^{2}},\tilde{\delta}_0)$ and all $t \in [0,1]$, we have $\psi(b,x_0,t) \in \cW_{f_b}^c(x_0)\cap \ac_{f_b}(x_0)=\cac_{f_b}(x_0)$, where $f_b:=\hat f(b,\cdot)$. 
	Besides,  \eqref{submerssion} implies that the map $\psi(\cdot,x_0,1)$ is a submersion in a neighbourhood of $0_{\R^2}$, hence 
	$$
	\psi(\cdot,x_0,1)^{-1}\{x_0\}\cap B(0_{\R^2},\delta_1)=\{0_{\R_2}\},
	$$
	for some sufficiently small $\delta_1\in (0,\tilde \delta_0)$. Fix $b \in B(0_{\R^2},\delta_1) \setminus \{0_{\R^2}\}$ such that $g:=f_b \in\mathscr{F}$; we have
	$
	\psi(b,x_0,1)\in \cac_{g}(x_0)\setminus \{x_0\}
	$, and $[0,1]\ni t \mapsto \psi(b,x_0,t)\in \cac_{g}(x_0)$ is a non-trivial $g$-path connecting $x_0$ to $\psi(b,x_0,1)\neq x_0$ within $\cac_{g}(x_0)$. In particular, $x_0 \notin\tilde \Gamma_g^0(\sigma)$; in fact, by Theorem \ref{theorem structure}, $\cac_{g}(x_0)$ is at least one-dimensional. 
	Moreover, by point \eqref{point un de la prop}  of Corollary \ref{coro un peu chiant}, we have $d_{C^r}(f,g)< \delta$, which concludes the proof. 
\end{proof}

\subsection{Opening one-dimensional accessibility classes}\label{opening one dim}


The following result  shows that local accessibility can be achieved near non-periodic points after a $C^r$-small perturbation.

\begin{prop}\label{main prop bis}
	For any non-periodic point $x_0 \in M$,  and for any  $\delta>0$,   there exists a partially hyperbolic diffeomorphism $g \in \mathscr{F}$ such that $d_{C^r}(f,g)< \delta$ and such that the accessibility class $\ac_g(x_0)$ is open. 
\end{prop}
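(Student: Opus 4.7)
The plan is to first reduce, using Proposition \ref{prop break zero}, to the case where $\cac_f(x_0)$ is one-dimensional, and then to apply Corollary \ref{coro un peu chiant} with $k_0=2$, which yields a submersion from a $4$-dimensional space of perturbations into $\R^4$; the cone control of Proposition \ref{lemma var des classes} will then force the accessibility class of $x_0$ to become two-dimensional, hence open, after a suitable further perturbation.

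I will first apply Proposition \ref{prop break zero} to obtain $f_1 \in \mathscr{F}$ with $d_{C^r}(f,f_1)<\delta/2$ and $x_0 \notin \tilde\Gamma^0_{f_1}(\sigma_1)$ for some small $\sigma_1>0$. If $\ac_{f_1}(x_0)$ is already open, I set $g:=f_1$ and conclude. Otherwise, by Theorem \ref{theorem structure}, $x_0 \in \Gamma_{f_1}^1\setminus\tilde\Gamma^0_{f_1}(\sigma_1)$, so $x_0\in \overline{\Gamma}_{f_1}(x_0)$ and $n(x_0)=5$. Since $x_0$ is non-periodic, I can shrink $\sigma\in(0,\sigma_1)$ so that $R_{\pm}(f_1, B(x_0, 10\sigma))$ exceeds the constant $\tilde R_0$ of Corollary \ref{coro un peu chiant}. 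I then fix a small angle $\theta>0$ and apply Proposition \ref{lemma var des classes} to obtain a $C^1$-neighborhood $\cU$ of $f_1$ and $\varepsilon_0>0$ such that for every $g\in \cU^{\mathscr{F}}_1(x_0)$ and every $\varepsilon\in(0,\varepsilon_0)$, $\Pi^c_{x_0}\cac_g(x_0,\varepsilon)\subset\mathscr{C}_1$, where $\mathscr{C}_1\subset\R^2$ is the $\theta$-cone around $\Pi^c_{x_0}T_{x_0}\cac_{f_1}(x_0)$ in the chart $\phi_{x_0}$ of Lemma \ref{lemma constr phi}.

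Next, I will invoke Corollary \ref{coro un peu chiant} at $f_1$ with $k_0=2$ and perturbation size $\delta/2$ to produce a $C^r$ deformation $\hat{f}_1:B(0_{\R^4},\tilde{\delta}_0)\times M\to M$ supported in $B(x_0,10\sigma)$, together with a $C^1$ map $\Phi(\cdot,x_0):B(0_{\R^4},\tilde{\delta}_0)\to\R^2\times\R^2$ satisfying $|\det D_0\Phi(\cdot,x_0)|>\tilde{\kappa}_0>0$. By construction, $\Phi(0_{\R^4},x_0)=(0_{\R^2},0_{\R^2})$, and for every $b$ both coordinates $\Phi^{(k)}(b,x_0)$, $k=1,2$, are $\Pi^c_{x_0}$-images of endpoints of the continuations for $f_{1,b}$ of the closed $10$ us-loops $\gamma^{x_0}(k/2)$ at $(f_1,x_0)$; in particular they lie in $\Pi^c_{x_0}\cac_{f_{1,b}}(x_0,C\sigma)$ for some uniform $C>0$. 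By the inverse function theorem, $\Phi(\cdot,x_0)$ is a local diffeomorphism at $0_{\R^4}$; since, for $\theta$ small, $\mathscr{C}_1\times\mathscr{C}_1$ does not exhaust any neighborhood of $(0_{\R^2},0_{\R^2})\in\R^4$, I pick $b_*\in B(0_{\R^4},\tilde{\delta}_0)$ arbitrarily close to $0_{\R^4}$ with $\Phi(b_*,x_0)\notin\mathscr{C}_1\times\mathscr{C}_1$. Setting $g:=f_{1,b_*}$, taking $b_*$ small ensures $g\in\cU^{\mathscr{F}}$ and $d_{C^r}(f,g)<\delta$; the accessibility endpoint produced lies in $\cac_g(x_0)$ but its $\Pi^c_{x_0}$-projection falls outside $\mathscr{C}_1$. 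By the cone control of Proposition \ref{lemma var des classes}, this rules out $g\in\cU^\mathscr{F}_1(x_0)$, i.e.\ $\cac_g(x_0)$ cannot be one-dimensional; since the endpoint is nonzero (being outside the cone $\mathscr{C}_1$, which contains $0_{\R^2}$), the class is also non-trivial, so Theorem \ref{theorem structure} forces it to be two-dimensional, i.e.\ $\ac_g(x_0)$ is open.

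The main obstacle is the one-dimensional case: one must use the strong bunching hypothesis $f\in\cPH_*^r(M)$, through Proposition \ref{lemma var des classes}, to trap all potential one-dimensional accessibility classes of $C^1$-nearby maps inside a fixed narrow cone, so that the $4$-parameter submersion of Corollary \ref{coro un peu chiant} has enough freedom to push some accessibility endpoint out of this cone. Secondary technical points are to check that $g$ remains in both $\mathscr{F}$ and the cone-control neighborhood $\cU$ (both $C^1$-open, hence automatic for $b_*$ near $0_{\R^4}$), and to ensure compatibility between the parameter $\sigma_1$ from Step~1 and the parameter $\sigma$ used in Corollary \ref{coro un peu chiant}, so that $x_0$ still belongs to $\overline{\Gamma}_{f_1}(x_0)$ at the chosen scale.
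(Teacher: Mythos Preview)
Your argument is correct and follows essentially the same route as the paper: reduce via Proposition~\ref{prop break zero} to the case $x_0\in\Gamma^1_{f_1}\setminus\tilde\Gamma^0_{f_1}(\sigma)$, use the cone control of Proposition~\ref{lemma var des classes} to trap all nearby one-dimensional classes, and then invoke the submersion of Corollary~\ref{coro un peu chiant} to find a perturbation pushing an accessibility endpoint out of the cone.

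The only notable difference is that you take $k_0=2$ (a $4$-parameter deformation mapping submersively to $\R^4$) and argue directly, whereas the paper takes $k_0=1$ and argues by contradiction: assuming $\ac_g(x_0)$ is \emph{stably} non-open, the $2$-parameter submersion $\Phi_{x_0}\colon B(0_{\R^2},\tilde\delta_1)\to\R^2$ would have image contained in the cone $\mathscr{C}(x_0,\theta)$, contradicting the fact that its image is an open neighbourhood of $0_{\R^2}$. Your use of $k_0=2$ is harmless but unnecessary: already with $k_0=1$ the image of the submersion cannot lie in a thin cone, so some $b$ lands outside $\mathscr{C}_1$, and your remaining steps (non-triviality from $0_{\R^2}\in\mathscr{C}_1$, hence openness by Theorem~\ref{theorem structure}) go through verbatim. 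The paper's contradiction framing is slightly cleaner in that it avoids having to separately verify that the chosen $b_*$ keeps $g$ in $\cU$ and the endpoint within the $\varepsilon_0$-scale of Proposition~\ref{lemma var des classes}, but your direct argument handles these points adequately.
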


\begin{proof}
	Let us  consider  a non-periodic point $x_0 \in M$. Let $k_0:=1$, and let $\tilde{\sigma}_0=\tilde{\sigma}_0(f,1)>0$, $\tilde R_0=\tilde R_0(f,1)>0$ be the constants given by Corollary \ref{coro un peu chiant}. As $x_0$ is non-periodic, we can fix $\sigma\in (0,\tilde \sigma_0)$ such that $R_{\pm}(f,B(x_0,10\sigma))>\tilde R_0$. 
	
	Assume by contradiction that $\ac_f(x_0)$ is stably not open in the $C^r$ topology.  In other words, by Theorem \ref{theorem structure}, for some  $\delta_1>0$, and for all diffeomorphism $g \in \cPH_*^r(M)$  such that $d_{C^r}(f,g)<\delta_1$, we have  $x_0 \in \Gamma_g$.  Fix some small $\delta \in (0,\delta_1)$. By Proposition \ref{prop break zero}, there exists a  $C^r$ diffeomorphism $f_0 \in \mathscr{F}$ with  $d_{C^r}(f,f_{0})< \frac{\delta}{2}$ such that  $x_0 \not\in \tilde\Gamma_{f_0}^0(\frac{\sigma}{2})$. In particular, by our choice of $\delta$, we have $x_0 \in \Gamma_{f_0}^1\setminus \tilde\Gamma_{f_0}^0(\frac{\sigma}{2})$. 
	Besides, there exists $\delta_2 \in (0,\frac{ \delta}{2})$ such that for any diffeomorphism $g$ satisfying $d_{C^r}(f_0,g)<\delta_2$, we have $g \in \mathscr{F}$, and $x_0 \in \Gamma_{g}^1\setminus \tilde\Gamma_{g}^0(\sigma)$. In particular, with the notations in Section \ref{section one dim},   $\mathcal{U}^\mathscr{F}=\mathcal{U}_1^\mathscr{F}(x_0)$, where $\mathcal{U}$ is a $\delta_2$-neighbourhood of $f_0$ in the $C^r$ topology. 
	
	By Corollary \ref{coro un peu chiant}, for $n:=n(x_0)=5$, there exist a continuous family 
	\begin{equation}\label{famille gamma zero longu}
	\gamma=\gamma^{x_0}=\{\gamma(t) = [x_0,x_1(t),x_2(t),\dots,x_{10}(t)]\}_{t \in [0,1]}
	\end{equation} 
	of $10$ us-loops at $(f_0,x_0)$ such that $\ell(\gamma)< \sigma$, $\gamma(0)$ is trivial, $\gamma(1)$ is a non-degenerate closed $10$ us-loop, a $C^r$ deformation $\hat f \colon B(0_{\R^{2}},\tilde{\delta}_0) \times M \to M$ at $f_0$ with $2$-parameters,  with $\tilde \delta_0=\tilde \delta_0(f_0,r,\delta)>0$,  so that for the map $\Pi^c_{x_0}\colon M \to \R^2$ defined in Lemma \ref{lemma constr phi},  $T=T(\hat f)$,  and $\psi=\psi(T,x_0,\gamma^{x_0})$,  the map $\Phi_{x_0}\colon  B(0_{\R^{2}},\tilde{\delta}_0)\ni b\mapsto  \Pi_{x_0}^c\psi(b,x_0,1)$ is $C^1$, and for some constant $\tilde \kappa_0=\tilde \kappa_0(f,1)>0$, it holds 
	\begin{equation}\label{submerssion bis i}
	|\det D_b|_{b=0}\Phi_{x_0}| > \tilde \kappa_0.
	\end{equation}
	
	Fix some small $\theta>0$. It follows from the previous discussion and Proposition \ref{lemma var des classes} that for  $\delta_0\in (0,\tilde \delta_0)$, $\varepsilon_0>0$ sufficiently small, then for all $b \in B(0_{\R^2},\delta_0)$, the diffeomorphism $f_b:=\hat f(b,\cdot)$ satisfies $f_b\in \mathcal{U}_1^\mathscr{F}(x_0)$, and 
	\begin{align}\label{var clases}
	\cac_{f_b}(x_0,\varepsilon_0)\subset \mathscr{C}_{f_0}(x_0,\theta,\varepsilon_0),
	\end{align}
	where $\mathscr{C}_{f_0}(x_0,\theta,\varepsilon_0)$ is as in \eqref{def connne}. Let us set 
	$$
	\mathscr{C}(x_0,\theta):=\Pi_{x_0}^c \big( \mathscr{C}_{f_0}(x_0,\theta,\varepsilon_0)\big). 
	$$ 
	

	By Definition \ref{lift of loop for deformation}, and since $\psi(0,x_0,1)=x_0$,\footnote{Recall that $\gamma(1)$ is a closed $10$ us-loop at $(f_0,x_0)$.} for $\tilde\delta_1\in (0,\delta_0)$ sufficiently small, 
	we have $\psi(b,x_0,1)\in \mathrm{C}_{f_b}(x_0,\varepsilon_0)$, for all $b \in B(0_{\R^{2}},\tilde\delta_1)$, and by \eqref{var clases}, we deduce that 
	$$
	\Phi_{x_0}\big(B(0_{\R^{2}},\tilde{\delta}_1)\big)\subset \Pi_x^c\big(\mathrm{C}_{f_b}(x_0,\varepsilon_0)\big)\subset \mathscr{C}(x_0,\theta).
	$$
	On the one hand, by the definition of the cone $\mathscr{C}(x_0,\theta)$, 
	we have $\R^2\setminus \Phi_{x_0}\big(B(0_{\R^{2}},\tilde{\delta}_1)\big)\supset \Delta_0$ for some straight line $\Delta_0$ through the origin $0_{\R^2}$. But on the other hand, it follows from \eqref{submerssion bis i} that $\Phi_{x_0}\big(B(0_{\R^{2}},\tilde{\delta}_1)\big)$ contains an open neighbourhood of $0_{\R^2}$, a contradiction. By Theorem \ref{theorem structure}, we conclude that for some $b \in B(0_{\R^2},\tilde{\delta}_0)$, $\ac_{f_b}(x_0)$ is open; moreover, by construction, $g:=f_b\in \mathscr{F}$ satisfies   
	$$
	d_{C^r}(f,g) \leq d_{C^r}(f,f_0)+d_{C^r}(f_0,f_b)<\frac \delta 2+\frac \delta 2=\delta,
	$$ 
	which concludes the proof. 
\end{proof}

\section{$C^r$-density of accessibility}\label{setion dens}

In this part, we conclude the proof of our main results stated in Section \ref{section main resul}. As above, we fix an integer $r \geq 2$, and let $f \in \mathscr{F}$, where  $\mathscr{F}\subset \mathcal{PH}_*^r(M)$ is the set of  $C^r$ dynamically coherent, plaque expansive, partially hyperbolic diffeomorphisms with two-dimensional center, which satisfy some strong bunching condition as in Definition \ref{stron bunch}. Our  goal is to conclude the proof of our main result (Theorem \ref{premier theo}):

\begin{prop}\label{main prop}
	For any $\delta>0$, there exists a partially hyperbolic diffeomorphism $g\in\mathscr{F}$ with $d_{C^r}(f,g)< \delta$ such that $g$ is stably accessible.
\end{prop}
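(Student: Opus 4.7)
The plan is to globalize the local results of Section 6 via a spanning family argument (after \cite{DW}), treating a finite collection of center disks one at a time while preserving the progress made on earlier disks. First I would fix, as in Subsection \ref{spanning famm}, a finite family $\{\mathcal{D}_i\}_{i=1}^N$ of small center disks which spans in the $C^1$-stable sense: whenever $g$ is sufficiently $C^1$-close to $f$ and every $x \in \bigcup_i \mathcal{D}_i$ has open $g$-accessibility class, $g$ is accessible. By shrinking the disks and sliding their centers slightly, I can assume each $\mathcal{D}_i$ consists of non-periodic points and has diameter much smaller than the $C^2$-uniform constants of Corollary \ref{coro un peu chiant} and Proposition \ref{lemma var des classes}, so that the recurrence assumption $R_{\pm}(f, B(x,10\sigma)) > \tilde R_0$ needed to invoke Corollary \ref{coro un peu chiant} holds uniformly on $\mathcal{D}_i$.

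Next, for each disk I would perform a two-stage $C^r$-small perturbation, following the outline of Subsection \ref{break trivial global}. \emph{Stage one} breaks every locally trivial center accessibility class in $\mathcal{D}_i$ simultaneously. A naive application of Proposition \ref{prop break zero} to a single point may re-create trivial classes at nearby points whose non-trivial class was very small; to bypass this I would apply Corollary \ref{coro un peu chiant} with $k_0 = 2$, producing a $4$-parameter $C^r$ deformation $\hat f$ localized near $\mathcal{D}_i$ whose endpoint map $\Phi = (\Phi^{(1)}, \Phi^{(2)}) \colon \R^4 \times \mathcal{D}_i \to \R^4$ is a submersion in $b$ uniformly in $x$. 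For a fixed $x$ the set $\{b : \Phi(b,x) = 0\}$ is a single point by the implicit function theorem, and as $x$ varies in the $2$-dimensional disk $\mathcal{D}_i$ the union of these bad sets is at most $2$-dimensional in $\R^4$, hence of codimension $2$. A generic arbitrarily small $b$ therefore satisfies $x \notin \tilde\Gamma_{f_b}^0(\sigma)$ for \emph{every} $x \in \mathcal{D}_i$ at once. \emph{Stage two} opens the resulting one-dimensional classes. Using Lemma \ref{lemme de construction des loops} I would attach to each $x \in \mathcal{D}_i$ a $us$-loop $\gamma_1^x$ and an $su$-loop $\gamma_2^x$ whose unperturbed endpoints lie in opposite cones $\mathscr{C}^{\pm}$ transverse to $T_x \cac_{f_0}(x)$, and apply a $k_0=2$ version of Corollary \ref{coro un peu chiant}. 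If $\cac_g(x)$ remained one-dimensional for all $g$ in a $C^r$-neighborhood, Proposition \ref{lemma var des classes} would force $\Pi_x^c \cac_g(x, 10\sigma)$ to lie in a uniform thin cone around $T_x \cac_{f_0}(x)$, uniformly in $x \in \mathcal{D}_i$; but the submersion property provides arbitrary local control over the endpoints $\Phi^{(k)}(b,x)$, so a generic small $b$ places some endpoint outside this cone for every $x$ simultaneously, forcing $\cac_g(x)$ to be two-dimensional (open) by Theorem \ref{theorem structure}.

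Finally, I would iterate over $i = 1, \dots, N$. After $\mathcal{D}_i$ is treated, every $x \in \bigcup_{j \leq i} \mathcal{D}_j$ has an open $g$-accessibility class, and this property is $C^1$-open by Theorems \ref{thmplaqueexpansivetostablydc} and \ref{theorem structure} (openness of accessibility classes persists under small $C^1$ perturbations in the dynamically coherent, plaque expansive setting). Choosing the $(i+1)$-th perturbation smaller than both $\delta \cdot 2^{-(i+1)}$ and the $C^1$-stability radius of the currently open classes preserves all earlier work while treating $\mathcal{D}_{i+1}$. After $N$ steps I obtain $g \in \mathscr{F}$ with $d_{C^r}(f,g) < \delta$ such that every $x \in \bigcup_i \mathcal{D}_i$ has open $g$-accessibility class; by the spanning property and its $C^1$-stability, $g$ is stably accessible. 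The main obstacle is the codimension-$2$ argument in Stage one: with only a $2$-parameter family the bad $b$'s form a full-dimensional set in $\R^2$, so no uniform perturbation over $\mathcal{D}_i$ is possible, and without Stage one Stage two cannot even start. A secondary subtlety is the continuous dependence of the loop families $\gamma^x$, $\gamma_1^x$, $\gamma_2^x$ on $x \in \mathcal{D}_i$ (Lemmas \ref{lem construction families of loops} and \ref{lemme de construction des loops}), which is precisely what allows the submersion estimates of Corollary \ref{coro un peu chiant} to be uniform over the whole disk.
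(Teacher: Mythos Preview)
Your proposal is correct and follows essentially the same route as the paper: the paper packages your two-stage argument into Proposition \ref{prop open global} (which internally invokes Proposition \ref{prop break zero global} for Stage one and then runs your cone argument for Stage two) and then iterates it over a finite cover of the spanning $c$-family exactly as you describe. One small correction: the $C^1$-robustness of the already-open classes in your final paragraph does not follow from Theorems \ref{thmplaqueexpansivetostablydc} and \ref{theorem structure} as you cite, but rather from the persistence under $C^1$-perturbation of the specific short loops you built (with endpoints outside the cone $\mathscr{C}_1$), which is precisely how the paper extracts the constant $\delta''$ in Proposition \ref{prop open global}.
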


\subsection{Spanning $c$-families}\label{spanning famm}

For the proof of Proposition \ref{main prop}, we combine ideas from the last section with some global argument; this is done by means of spanning families of center-disks; this notion was already present in the work of Dolgopyat-Wilkinson \cite{DW} and is also used in \cite{LZ}.  

\begin{defi}[$c$-disk] 
	For each $x \in M$ and $\sigma>0$, $\cC = \cWc_f(x,\sigma)$ is called the \emph{center disk} of $f$ (or $c$-disk of $f$ for short) centered at $x$ with radius $\sigma$. We set $\varrho(\cC):=\sigma$, and for any $\theta \in (0, 1 ]$, we also define $\theta \cC:= \cWc_f(x, \theta \sigma)$.
\end{defi}

\begin{defi}
	A collection of disjoint center disks $\mathcal{D} = \{\cC_1, \dots, \cC_J \}$ is called a \emph{family of center disks for $f$} (or \emph{$c$-family for $f$} for short). In addition, we set 
	$$
	\underline{r}(\mathcal{D}):=\inf_{\cC\in \mathcal{D}}\{\varrho(\cC)\},\quad \overline{r}(\mathcal{D}):=\sup_{\cC\in \mathcal{D}}\{\varrho(\cC)\}. 
	$$
	Given $\theta \in (0,1)$  and 
	$k \geq 1$, we say that $\mathcal{D}$ is a \emph{$(\theta, k)$-spanning} $c$-family for $f$ if 
	\begin{equation*}
	M = \bigcup_{\cC \in \mathcal{D}} \bigcup_{x \in \theta\cC} Acc_{f}(x, k),
	\end{equation*}
	where $Acc_{f}(x, k)$ denotes the set of all points $y \in M$ which can connected to $x$ by a $f$-accessibility sequence with at most $k$ legs of length less than one.
	
	Given any subset $\mathcal{C} \subset M$, and $\sigma\geq 0$, we set $(\mathcal{C},\sigma):=\{x \in M\, \vert\, d(x,\mathcal{C})\leq \sigma\}$. 
	Given $\sigma\geq 0$ and  a $c$-family  $\mathcal{D} = \{\cC_1, \dots, \cC_J\}$ for $f$, we set 
	\begin{equation*}
	(\mathcal{D}, \sigma):= \bigcup_{j=1}^J (\cC_j, \sigma).
	\end{equation*}
	
	We say that  $\mathcal{D}$ is  \emph{$\sigma$-sparse} if for any two distinct $\cC, \cC' \in \mathcal{D}$, $(\cC, \sigma), (\cC', \sigma)$ are disjoint. Any $c$-family for $f$ is $\sigma$-sparse for some $\sigma>0$. 
\end{defi}

\begin{prop}[Corollary 6.2, \cite{LZ}]\label{prop acc mod den dis}  
	Assume that $f \in \cPH^1(M)$ is dynamically coherent, plaque expansive, and that the fixed points of $f^{k}$ are isolated for all $k\geq 1$.  Then for every $\overline{R} > 1$, there exist $C^1$-uniform constants $\overline{N} = \overline{N}(f,\overline{R})>0$, $\overline{\rho} = \overline{\rho}(f, \overline{R})\in(0,\overline{R}^{-1})$ and $\overline{\sigma} = \overline{\sigma}(f,\overline{R})>0$ such that the following is true. For all diffeomorphism $g$ sufficiently $C^1$-close to $f$, there exists   a $(\frac{1}{40},  4)$-spanning $c$-family $\cD_g$ for $g$ with at most $\overline{N}$ elements such that 
	\begin{enumerate}
		\item $\overline{\rho}<\underline{r}(\cD_g)\leq \overline{r}(\cD_g) < \overline{R}^{-1}$;
		\item $\cD_g$ is $\overline{\sigma}$-sparse;
		\item $R_\pm(g, (\cD_g, \overline{\sigma})) > \overline{R}$.
	\end{enumerate}
	Moreover, the map $g \mapsto \mathcal{D}_g$ can be chosen to be continuous. 
\end{prop}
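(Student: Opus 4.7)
The plan is to prove Proposition~\ref{prop acc mod den dis} in three steps, each building on a compactness-plus-finiteness argument. The construction of the family for $f$ itself relies on the local product structure of partially hyperbolic diffeomorphisms and on the finiteness of low-period periodic orbits; the passage to nearby $g$ goes through the leaf conjugacy supplied by Theorem~\ref{thmplaqueexpansivetostablydc}.

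\emph{Step 1: a $(\tfrac{1}{40}, 4)$-spanning family for $f$.} By dynamical coherence and the local product structure, for every $z \in M$ and every sufficiently small $\sigma > 0$, the set of points reached from $\tfrac{1}{40}\cWc_f(z,\sigma)$ by $f$-accessibility paths with at most $4$ legs of length at most $1$ contains a ball $B(z, c(f)\sigma)$ for some $C^1$-uniform constant $c(f) > 0$: indeed, a single $us$-concatenation starting from each point of the center disk already sweeps out a full-dimensional box of size $\propto \sigma$, since the center direction is covered by varying the base point in the disk and the $s\oplus u$-directions by the concatenation. Applying the Lebesgue number lemma to the resulting open cover of $M$, for any $\sigma \in (0,\overline{R}^{-1})$ one can extract an integer $N(\sigma) \leq C(f)\sigma^{-\dim M}$ and centers $z_1,\dots,z_{N(\sigma)} \in M$ whose disks $\cWc_f(z_i,\sigma)$ form a $(\tfrac{1}{40},4)$-spanning $c$-family $\mathcal{D}_f$.

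\emph{Step 2: forcing sparsity and recurrence.} By hypothesis, for each $k \geq 1$ the set $\mathrm{Fix}(f^k)$ is isolated, hence finite by compactness of $M$, so $P_{\overline{R}} := \bigcup_{k=1}^{\overline{R}} \mathrm{Fix}(f^k)$ is finite. For $z \notin P_{\overline{R}}$ the orbit segment $\{f^k(z) : |k| \leq \overline{R}\}$ consists of $2\overline{R}+1$ distinct points, and by compactness of $M \setminus U$ for any open $U \supset P_{\overline{R}}$ there is a uniform lower bound $d_0 = d_0(f,\overline{R},U) > 0$ on pairwise distances within such orbit segments. Fix a small open neighborhood $U$ of $P_{\overline{R}}$ and redo Step 1 constraining the centers $z_i$ to lie in $M \setminus U$; this remains possible because the $us$-reach from a disk around any non-periodic center still contains a ball of uniform size. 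Choosing $\sigma$ and $\overline{\sigma}$ both much smaller than $d_0$ then guarantees that $\mathcal{D}_f$ is $\overline{\sigma}$-sparse and satisfies $R_\pm(f,(\mathcal{D}_f,\overline{\sigma})) > \overline{R}$, thereby securing properties (1)--(3) for $f$ with constants depending only on $f$ and $\overline{R}$.

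\emph{Step 3: persistence under $C^1$-perturbation and continuity in $g$.} By Theorem~\ref{thmplaqueexpansivetostablydc} there is a $C^1$-neighborhood $\cU$ of $f$ on which dynamical coherence and plaque expansiveness persist, together with a leaf conjugacy $\mathfrak{h}_g \colon M \to M$ depending continuously on $g \in \cU$ and satisfying $\mathfrak{h}_f = \mathrm{id}_M$. Set $z_i^g := \mathfrak{h}_g(z_i)$ and $\mathcal{D}_g := \{\cWc_g(z_i^g,\sigma)\}_{i=1}^{N(\sigma)}$; the assignment $g \mapsto \mathcal{D}_g$ is manifestly continuous. Since the invariant foliations $\cWs_g, \cWu_g, \cWc_g$ and the finite iterates $g^{\pm k}$, $|k| \leq \overline{R}$, depend continuously on $g$ in the $C^0$-topology, properties (1)--(3) established for $\mathcal{D}_f$ persist for $\mathcal{D}_g$ with slightly worse but still uniform constants throughout a possibly smaller $\cU$; the $(\tfrac{1}{40},4)$-spanning property, being a local covering condition by continuously varying foliation-charts, is likewise open. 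The main obstacle is the simultaneous calibration of $\sigma$, $\overline{\sigma}$, and the size of $\cU$: the spanning condition forces $\sigma$ not to be too small relative to the Lebesgue number of the cover, the sparsity and recurrence conditions demand $\sigma, \overline{\sigma} \ll d_0$, and persistence under perturbation demands a margin in each; the finiteness of $P_{\overline{R}}$ together with uniform continuity of $f$ on $M \setminus U$ provides exactly the rigidity needed to make all these constants depend only on $f$ and $\overline{R}$, as required by the $C^1$-uniformity claimed in the statement.
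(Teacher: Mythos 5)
First, a remark on the comparison itself: the paper does not prove this proposition — it is imported verbatim as Corollary~6.2 of \cite{LZ} (which in turn elaborates on \cite{DW}) — so there is no in-paper argument to measure your proposal against. Your outline does follow the standard strategy (openness of the $4$-legged reach of a small center disk via local product structure, finiteness of low-period periodic orbits, persistence under $C^1$-perturbation through the continuity of the invariant foliations and the leaf conjugacy), but Steps~1--2 have genuine gaps, all stemming from treating the separation requirements as per-disk conditions when they are conditions on the \emph{family}. (i) A $c$-family consists by definition of pairwise \emph{disjoint} disks, and $\overline{\sigma}$-sparsity requires the $\overline{\sigma}$-neighbourhoods of \emph{distinct} disks to be disjoint; your constant $d_0$ only controls distances $d(f^j(z),f^k(z))$ along a single orbit segment and says nothing about $d(z_i,z_j)$ for $i\neq j$. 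Moreover, if the reach of a disk of radius $\sigma$ only contains a ball $B(z,c(f)\sigma)$, the centers must form a $c(f)\sigma$-net while the disks have radius $\sigma$, so disks lying in a common center leaf will generally overlap; one must either shrink the disks relative to the net scale or use that the reach is actually of macroscopic size $O(1)$ in the $su$-accessibility directions (the legs may have length up to $1$) and only $O(\sigma)$-thin in the center direction. (ii) The condition $R_\pm(f,(\cD_f,\overline{\sigma}))>\overline{R}$ concerns the fattened \emph{union}, hence also forbids $f^n\big((\cC_i,\overline{\sigma})\big)\cap(\cC_j,\overline{\sigma})\neq\emptyset$ for $i\neq j$ and $1\leq n\leq\overline{R}$; this forces an inductive choice of each center avoiding the finite orbit segments of the previously chosen ones, which you never perform.

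(iii) There is also an unresolved circularity near periodic points. You constrain the centers to $M\setminus U$ with $U\supset P_{\overline{R}}$, so $U$ must be covered by reaches of disks centered outside it; with your estimate this requires the thickness $\epsilon_1$ of $U$ to satisfy $\epsilon_1\lesssim c(f)\sigma$, while the recurrence bound requires $\sigma\ll d_0(U)$, and $d_0(U)$ degenerates as $U$ shrinks (a point at distance $\epsilon_1$ from a hyperbolic periodic point of period $k\leq\overline{R}$ has a $k$-return at distance comparable to $\epsilon_1$). The chain $\epsilon_1\lesssim c(f)\sigma\ll d_0(\epsilon_1)\lesssim C\epsilon_1$ is inconsistent unless the constants happen to cooperate; the standard escape is, again, that the $4$-legged reach of a disk centered at macroscopic distance from a periodic point still covers a neighbourhood of that point because the $u$- and $s$-legs are long. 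Your closing paragraph names the three competing calibrations but does not exhibit an order of choice of the constants that satisfies all of them, so as written the argument does not close. Step~3 (persistence and continuity in $g$) is essentially fine once a correct family for $f$ is in hand, granting the continuity in $g$ of the leaf conjugacy of Theorem~\ref{thmplaqueexpansivetostablydc}.
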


\subsection{Density of diffeomorphisms with no trivial accessibility class}\label{break trivial global}

The following result strengthens Proposition \ref{prop break zero}. 
\begin{prop}\label{prop break zero global}
	There exist $C^2$-uniform constants  $\tilde{\sigma}_1=\tilde{\sigma}_1(f)>0$, $\tilde K_1=\tilde K_1(f)\in (0,1)$ and $\tilde R_1=\tilde R_1(f)>0$ such that for any $\delta>0$, for any $\sigma\in (0,\tilde{\sigma}_1)$, for any point $x_0\in M$ satisfying $R_{\pm}(f,B(x_0,10\sigma))>\tilde R_1$, there exists a partially hyperbolic diffeomorphism $g \in \mathscr{F}$ such that $d_{C^r}(f,g)< \delta$ and such that for some $\delta'=\delta'(x_0,g)>0$, we have $x \notin\tilde \Gamma_h^0(\sigma)$, for all $x \in \mathcal{W}_f^c(x_0,\tilde K_1\sigma)$ and for all $h \in \mathscr{F}$ with $d_{C^1}(g,h)<\delta'$. In particular, the center accessibility class $\mathrm{C}_h(x)$ of each point $x \in \mathcal{W}_f^c(x_0,\tilde K_1\sigma)$ is at least one-dimensional. 
\end{prop}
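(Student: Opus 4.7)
The plan is to run a uniform version of the argument of Proposition \ref{prop break zero} across a small center disk around $x_0$, invoking Corollary \ref{coro un peu chiant} with $k_0 = 2$ (two closed $4$ us-loops per base point, $2k_0 = 4$ perturbation parameters). Fix $\sigma_\ast \in (0, 10^{-3}\sigma)$ and choose $\tilde K_1$ so that $\cWc_f(x_0, \tilde K_1 \sigma) \subset \cWc_f(x_0, \tilde K_0 \sigma_\ast)$, and set $\tilde\sigma_1, \tilde R_1$ so that the hypotheses of Corollary \ref{coro un peu chiant} (with parameter $\sigma_\ast$) are guaranteed whenever those of the present proposition hold. If $x_0 \notin \tilde\Gamma^0_f(\sigma_\ast)$, there is a non-closed $4$ us-loop at $(f, x_0)$ of length $< 10^{-2}\sigma_\ast$; by Lemma \ref{lemma continuation}, its continuations at every $x$ in a sufficiently small center neighborhood of $x_0$ remain non-closed $4$ us-loops of length $< 10^{-2}\sigma$, with $C^1$-open endpoint condition, so we take $g := f$.

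Otherwise $x_0 \in \tilde\Gamma^0_f(\sigma_\ast)$, so $n(x_0) = 2$, and Corollary \ref{coro un peu chiant} supplies a $C^r$ deformation $\hat f \colon B(0_{\R^4}, \tilde\delta_0) \times M \to M$ with $\mathrm{supp}(\hat f) \subset B(x_0, 10\sigma_\ast)$, a continuous family $\{\gamma^x(t)\}_{t \in [0,1]}$ of $4$ us-loops at $(f,x)$ such that $\gamma^x(\tfrac12)$ and $\gamma^x(1)$ are non-degenerate closed for $x$ in the compact set $K := \tilde\Gamma^0_f(\sigma_\ast) \cap \cWc_f(x_0, \tilde K_1 \sigma)$, together with a continuous map $\Phi = (\Phi^{(1)}, \Phi^{(2)}) \colon B(0_{\R^4}, \tilde\delta_0) \times K \to \R^4$, $C^1$ in $b$ with $|\det D_b\Phi(0, x)| > \tilde\kappa_0$ uniformly in $x \in K$. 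By closedness of the loops at $(f,x)$, we have $\Phi(0, x) = 0_{\R^4}$ for every $x \in K$. A uniform inverse function theorem argument --- using the joint continuity of $D_b \Phi$ in $(b, x)$, which follows from the $C^r$ smoothness of the suspension $T = T(\hat f)$ together with the holonomy regularity of Lemma \ref{ph for T} --- yields $\eta > 0$ such that $\Phi(\cdot, x)$ is a $C^1$ diffeomorphism from $B(0_{\R^4}, \eta)$ onto its image for every $x \in K$; equivalently, the bad set $Z := \{(b, x) \in B(0, \eta) \times K : \Phi(b, x) = 0\}$ reduces to $\{0\} \times K$, so $\pi_b(Z) = \{0\}$. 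Picking any $b \in B(0, \eta) \setminus \{0\}$ small enough that $d_{C^r}(f, f_b) < \delta$ and $f_b \in \mathscr{F}$, and setting $g := f_b$, for each $x \in K$ there exists $k \in \{1,2\}$ with $\Phi^{(k)}(b, x) \neq 0$, so the continuation of $\gamma^x(k/2)$ is a non-closed $4$ us-loop of length $< \sigma_\ast < 10^{-2}\sigma$ at $(g, x)$, witnessing $x \notin \tilde\Gamma^0_g(\sigma)$. For $x \in \cWc_f(x_0, \tilde K_1\sigma) \setminus K$, a pre-existing non-closed $4$ us-loop at $(f, x)$ of length $< 10^{-2}\sigma_\ast$ remains non-closed at $(g, x)$ provided $\eta$ is taken small enough, by continuity in $b$ and compactness. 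Stability under $C^1$-perturbation of $g$ and the final claim about $\mathrm{C}_h(x)$ being at least one-dimensional then follow from Lemma \ref{lemma continuation}, compactness of the closed disk, and Theorem \ref{theorem structure}.

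The principal obstacle is the tension between two competing constraints on $b$: it must be non-zero in order to break triviality at points of $K$, yet small enough to avoid collapsing the potentially very small non-trivial classes of points in $\cWc_f(x_0, \tilde K_1\sigma) \setminus K$ back into trivial ones. The use of two loops, i.e., $k_0 = 2$ in Corollary \ref{coro un peu chiant}, is what makes the uniform inverse function theorem deliver $\pi_b(Z) = \{0\}$, so that the admissible set of perturbations is a full punctured ball of arbitrarily small radius, reconciling the two requirements. A secondary technical point is the joint continuity of $D_b \Phi$ in $(b, x)$ underlying the uniform IFT; this should follow from the $C^r$ smoothness of the suspension $T(\hat f)$ and the uniform holonomy regularity developed in Section \ref{section one dim}.
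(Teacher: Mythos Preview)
Your split into $K = \tilde\Gamma_f^0(\sigma_\ast) \cap \cWc_f(x_0,\tilde K_1\sigma)$ and its complement does not close. On $K$ your IFT argument is fine, but note that it does not actually use $k_0=2$: with $k_0=1$ the map $\Phi(\cdot,x)\colon B(0,\eta)\to\R^2$ is already a local diffeomorphism sending $0$ to $0$ for each $x\in K$, so $Z\cap(B(0,\eta)\times K)=\{0\}\times K$ holds just the same. The real difficulty is exactly the one you name, and your resolution does not address it: for $x\notin K$ approaching $\partial K$, the endpoint of any pre-existing non-closed $4$ us-loop at $(f,x)$ of length $<10^{-2}\sigma_\ast$ is forced (by continuity of holonomies and the fact that every such loop is closed at points of $K$) to lie arbitrarily close to $x$. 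Hence the smallness of $b$ required to keep that loop non-closed at $(f_b,x)$ degenerates to $0$, and no single $b\neq 0$ works uniformly. The complement $\cWc_f(x_0,\tilde K_1\sigma)\setminus K$ is open, so your compactness appeal is unjustified; having a full punctured ball of good $b$'s on $K$ buys nothing for points just outside $K$.

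The paper does not split the disk. It extends the loops $\gamma^x(\tfrac12),\gamma^x(1)$ by continuation (Lemma \ref{lemma continuation}) to every $x$ in the disk, not just $x\in K$, so that $\Phi$ is defined on all of $B(0_{\R^4},\tilde\delta_1)\times\cWc_f(x_0,\tilde K_1\sigma)$ with $|\det D_b\Phi(0,x)|>\tfrac12\tilde\kappa_0$ throughout. Then it views $(b,x)\mapsto(\Pi_x^c x,\Phi^{(1)}(b,x),\Phi^{(2)}(b,x))$ as a map into $\R^6$ transverse to the diagonal $\Sigma_0=\{(z,z,z)\}$; the bad set $\Phi^{-1}(0)$ is therefore a codimension-$4$ submanifold of the $6$-dimensional parameter space, hence $2$-dimensional, and its projection to $\R^4$ is a proper subset that one can avoid. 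This codimension count is the genuine role of $k_0=2$: with $k_0=1$ the bad set would be $2$-dimensional inside a $4$-dimensional space and could project onto all of $\R^2$. Your argument never invokes this transversality/codimension mechanism, which is precisely what handles the boundary layer near $\partial K$ uniformly.
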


\begin{remark}
	In order to deal with all the points in a given center disk, the idea is to increase the codimension of ``bad'' configurations; this is done by considering two $4$ us-loops at each point in the center disk, and show that we can construct a perturbation in such a way that for each of those points, at least one of the endpoints of the $4$ us-loops is not the original point. 
\end{remark}

\begin{proof}

Fix some small $\delta>0$, let $k_0:=2$, and let $\tilde{K}_1:=\tilde{K}_0(f)\in (0,1)$, 
$\tilde{\sigma}_1:=\tilde{\sigma}_0(f,2)>0$, $\tilde R_1:=\tilde R_0(f,2)>0$ and  $\tilde{\delta}_1:=\tilde{\delta}_0(f,r,\delta)>0$ be the constants given by Corollary \ref{coro un peu chiant}. Let us take $\sigma\in (0,\tilde{\sigma}_1)$, and let us consider a point $x_0\in \tilde \Gamma_f^0(\sigma)$ satisfying $R_{\pm}(f,B(x_0,10\sigma))>\tilde R_1$. 

By Corollary \ref{coro un peu chiant}, for $n:=n(x_0)=2$, there exists a continuous map $\tilde\Gamma_f^0(\sigma)\cap \cWc_f(x_0,\tilde{K}_1\sigma)\ni x \mapsto \gamma^x$ such that $\gamma^x=\{\gamma^x(t) = [x,x_1^x(t),x_{2}^x(t),x_3^x(t),x]\}_{t \in [0,1]}$ is a continuous family of $4$ us-loops at $(f,x)$, with $\ell(\gamma^x)< \sigma$,  such that $\gamma^x(0)$ is trivial, for $k=1,2$,  $\gamma^x(\frac{k}{2})$ is a non-degenerate closed us-loop, and there exists a $C^r$ deformation $\hat f \colon B(0_{\R^{4}},\tilde{\delta}_1) \times M \to M$ at $f$ with $4$-parameters, so that  $\mathrm{supp}(\hat f)\subset B(x_0,10\sigma)$, and 
such that the map 
\begin{equation*}
\Phi\colon\left\{
\begin{array}{rcl}
B(0_{\R^{4}},\tilde{\delta}_1) \times \Big(\tilde\Gamma_f^0(\sigma)\cap \cWc_f(x_0,\tilde{K}_1\sigma)\Big) & \to & \R^{4}=\R^2\times \R^2\\
(b,x) & \mapsto & (\Phi^{(1)}(b,x),\Phi^{(2)}(b,x))
\end{array}
\right.
\end{equation*}
is continuous and satisfies
	$$
	|\det D_b|_{b=0}\big(\Phi(\cdot,x)\big)| > \tilde \kappa_0,
	$$
	for some $C^2$-uniform constant   $\tilde \kappa_0=\tilde \kappa_0(f,2)>0$. 
Recall that   $\Pi_x^c\colon M \to \R^2$ is the map given by Lemma \ref{lemma constr phi}, $T=T(\hat f)$, $\psi_x=\psi(T,x,\gamma^{x})$, and 
$\Phi^{(k)}(\cdot,x):=\Pi_x^c\psi_x(\cdot,x,\frac{k}{2})$, for $k=1,2$. 

 By Lemma \ref{lemma continuation}, we can extend the map $x \mapsto \gamma^x=\{\gamma^x(t)\}_{t \in [0,1]}$ to all the points $x \in \cWc_f(x_0,\tilde{K}_1\sigma)$ (note that for $x \in  \cWc_f(x_0,\tilde{K}_1\sigma)\setminus \tilde\Gamma_f^0(\sigma)$, the us-loops $\gamma^x(\frac 12),\gamma^x(1)$ may not be closed). Considering the associated maps $\psi_x=\psi(T,x,\gamma^{x})$ and 
 $\Phi^{(k)}(\cdot,x)=\Pi_x^c\psi_x(\cdot,x,\frac{k}{2})$, for $k=1,2$, we can thus extend $\Phi$  to a map 
 \begin{equation*}
 \Phi\colon\left\{
 \begin{array}{rcl}
 B(0_{\R^{4}},\tilde{\delta}_1) \times   \cWc_f(x_0,\tilde{K}_1\sigma)  & \to & \R^{4}=\R^2\times \R^2\\
 (b,x) & \mapsto & (\Phi^{(1)}(b,x),\Phi^{(2)}(b,x))
 \end{array}
 \right.
 \end{equation*}
 such that
$$
|\det D_b|_{b=0}\big(\Phi(\cdot,x)\big)| >\frac 12  \tilde \kappa_0. 
$$

	Take $\hat \delta>0$ suitably small, and let 
	\begin{equation*}
	\Psi \colon\left\{
	\begin{array}{rcl}
	\cW_T^c((0,x_0),\hat \delta) &\to& \R^6=\R^2\times \R^2\times \R^2,\\
	(b,x)  &\mapsto& \big(\Pi_x^c(x),\Phi^{(1)}(b,x),\Phi^{(2)}(b,x)\big).
	\end{array}
	\right.
	\end{equation*}
	For any point $x \in\cWc_f(x_0,\tilde{K}_1\sigma)$, the map $\Phi(\cdot,x)$ is a submersion, and thus, the map $\Psi$ is uniformly transverse to the diagonal 
	$$
	\Sigma_0:= \{(z,z,z):z \in \R^2\}\subset \R^6.
	$$
	Therefore, $\Psi^{-1}(\Sigma_0)$ is a submanifold of codimension $4$. Let $\pi_B \colon \cW_T^c((0,x_0),\hat \delta) \to \R^4$, $(b,x)\mapsto b$. Let $b\in B(0_{\R^{4}},\tilde{\delta}_1)\setminus \pi_B \big(\Psi^{-1}(\Sigma_0)\big)$, and let $g:=f_b:=\hat f(b,\cdot)\in \mathscr{F}$. Then, for any $x \in\cWc_f(x_0,\tilde{K}_1\sigma)$, we have $\Psi((b,x))\notin \Sigma_0$, i.e., $\psi_x(b,x,\frac 12)\in \cac_{g}(x)\setminus \{x\}$ or $\psi_x(b,x,1)\in \cac_{g}(x)\setminus \{x\}$. We conclude that $x \notin \tilde \Gamma_g^0(\sigma)$.
	
	Actually, the same holds for any diffeomorphism $h$ that is sufficiently $C^1$-close to $g$. Indeed, for any $x \in \cWc_f(x_0,\tilde{K}_1\sigma)$, let $\gamma_{1}^x$, $\gamma_{2}^x$ be the $4$ us-loops at $(g,x)$ coming from $\gamma^x(\frac 12)$, $\gamma^x(1)$,  with respective endpoints $\psi_x(b,x,\frac 12)$, $\psi_x(b,x,1)$. For any diffeomorphism $h$ which is $C^1$-close to $g$, we let $\gamma_1^{x,h}$, $\gamma_2^{x,h}$ be the respective continuations of $\gamma_{1}^x$, $\gamma_{2}^x$ given by Lemma \ref{lemma continuation}, and we set 
	$$
	\widetilde \Psi(h,x):=\big(\Pi_x^c (x),\Pi_x^c H_{h,\gamma_1^{x,h}}(x),\Pi_x^c H_{h,\gamma_2^{x,h}}(x)\big).
	$$
	By our choice of $b$, and by compactness, there exists $\varepsilon_0>0$ such that 
	$$
	d(\widetilde \Psi(g,x),\Sigma_0)=d(\Psi(b,x),\Sigma_0)> \varepsilon_0,
	$$
	for all $x \in\cWc_f(x_0,\tilde{K}_1\sigma)$. Thus, there exists $\delta'>0$ such that for any diffeomorphism $h$ with $d_{C^1}(g,h)<\delta'$, and for any $x \in\cWc_f(x_0,\tilde{K}_1\sigma)$, it holds 
	$$
	d(\widetilde \Psi(h,x),\Sigma_0)> \frac{\varepsilon_0}{2}>0.
	$$
	Therefore, $H_{h,\gamma_1^{x,h}}(x)\in \cac_{h}(x)\setminus \{x\}$ or $H_{h,\gamma_2^{x,h}}(x)\in \cac_{h}(x)\setminus \{x\}$, so that $x \notin \tilde \Gamma_h^0(\sigma)$, which concludes the proof. 
\end{proof}

We can now give the proof of Theorem \ref{deuxieme theo}. 

\begin{corollary}\label{prop break zero global bis}
	There exists a $C^2$-uniform constant $\hat{\sigma}_1=\hat{\sigma}_1(f)>0$ such that for any $\sigma \in (0,\hat{\sigma}_1)$, and for any $\delta>0$,  
	there exists a partially hyperbolic diffeomorphism $g \in \mathscr{F}$ such that $d_{C^r}(f,g)< \delta$ and such that   
	for some $(\frac{1}{40},  4)$-spanning $c$-family  $\cD_g$ for $g$, it holds $x \notin \tilde \Gamma_g^0(\sigma)$, for all  $\mathcal{C}\in \mathcal{D}_g$, and for all $x \in \frac{1}{20}\mathcal{C}$. 
	In particular, the center accessibility class $\mathrm{C}_g(x)$ of each point $x \in M$ is non-trivial. 
\end{corollary}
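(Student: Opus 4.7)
The strategy is to combine the local perturbative result Proposition~\ref{prop break zero global} with the spanning $c$-family machinery of Proposition~\ref{prop acc mod den dis}, iterated over a finite list of centers covering every disk $\tfrac{1}{20}\mathcal{C}$ of the spanning family. I fix $\overline R>\tilde R_1(f)$ and apply Proposition~\ref{prop acc mod den dis} with this $\overline R$ to obtain $C^1$-uniform constants $\overline N,\overline\rho,\overline\sigma$ and, for every $g$ sufficiently $C^1$-close to $f$, a $(\tfrac{1}{40},4)$-spanning $c$-family $\mathcal{D}_g$ of cardinality at most $\overline N$ that is $\overline\sigma$-sparse, satisfies $R_\pm(g,(\mathcal{D}_g,\overline\sigma))>\overline R$, and depends continuously on $g$. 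I set $\hat\sigma_1:=\min(\tilde\sigma_1(f),\overline\sigma/20)$: for every $\sigma\in(0,\hat\sigma_1)$ and every $x$ lying in some $\mathcal{C}\in\mathcal{D}_f$, the ball $B(x,10\sigma)$ sits inside $(\mathcal{D}_f,\overline\sigma)$ and $R_\pm(f,B(x,10\sigma))>\overline R>\tilde R_1$, so the hypotheses of Proposition~\ref{prop break zero global} are met uniformly throughout the disks of $\mathcal{D}_f$.

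Writing $\mathcal{D}_f=\{\mathcal{C}_1,\dots,\mathcal{C}_J\}$, I cover each compact $f$-center disk $\tfrac{1}{20}\mathcal{C}_j$ by finitely many open balls $\cWc_f(x_{j,k},\tilde K_1\sigma/4)$, $1\le k\le K_j$, centered in $\mathcal{C}_j$; the total $I=\sum_j K_j$ is bounded by a $C^2$-uniform constant. Listing the centers as $(x_i)_{i=1,\dots,I}$ and setting $f^{(0)}:=f$, I iterate: at step $i$, apply Proposition~\ref{prop break zero global} to $f^{(i-1)}$ at $x_i$ with tolerance $\eta_i$, producing $f^{(i)}\in\mathscr{F}$ with $d_{C^r}(f^{(i-1)},f^{(i)})<\eta_i$ and a $C^1$-stability modulus $\delta'_i=\delta'(x_i,f^{(i)})>0$ ensuring non-triviality persists on $\cWc_{f^{(i-1)}}(x_i,\tilde K_1\sigma)$. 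Picking $\eta_i<\min(\delta/2^i,\ \delta'_j/2^{i-j}\text{ for all }j<i,\ \tau_i)$, where $\tau_i>0$ is a further threshold guaranteeing that center leaves of $f^{(i-1)}$ remain as $C^1$-close to those of $f$ as desired, yields $g:=f^{(I)}$ with $d_{C^r}(f,g)<\delta$ such that $x\notin\tilde\Gamma_g^0(\sigma)$ holds for every $x\in\bigcup_{i=1}^I\cWc_{f^{(i-1)}}(x_i,\tilde K_1\sigma)$.

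To pass from $f$-center to $g$-center disks I invoke the openness in $M$ of the complement of $\tilde\Gamma_g^0(\sigma)$ at fixed $g$: the property is witnessed by a $g$-us-loop of length $<10^{-2}\sigma$ whose endpoint is bounded away from the basepoint, which by Lemma~\ref{lemma continuation} applied to $g$ continues to a non-closing us-loop at every sufficiently nearby point in $M$. Since each $\cWc_{f^{(i-1)}}(x_i,\tilde K_1\sigma)$ is $C^0$-close to $\cWc_f(x_i,\tilde K_1\sigma)$ by the choice of $\tau_i$, their union covers an open neighborhood in $M$ of $\bigcup_j\tfrac{1}{20}\mathcal{C}_j$; continuity of $g\mapsto\mathcal{D}_g$ then places every $\tfrac{1}{20}\mathcal{C}$ with $\mathcal{C}\in\mathcal{D}_g$ inside this neighborhood for $\delta$ small enough. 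Non-triviality of $\cac_g(x)$ for arbitrary $x\in M$ finally follows from the $(\tfrac{1}{40},4)$-spanning property: every $x$ is joined to some $y\in\tfrac{1}{40}\mathcal{C}\subset\tfrac{1}{20}\mathcal{C}$ by an at-most-$4$-legged accessibility sequence whose holonomy map sends a non-trivial point of $\cac_g(y)$ to a non-trivial point of $\cac_g(x)$. The main obstacle is the simultaneous bookkeeping: keeping the accumulated perturbation inside the $\delta$-budget, respecting every previously established stability modulus $\delta'_i$, and controlling center-leaf drift through the thresholds $\tau_i$; all three are handled by the single geometric $1/2^i$-shrinking of $\eta_i$.
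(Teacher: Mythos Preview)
Your approach is essentially the same as the paper's: apply Proposition~\ref{prop acc mod den dis} to obtain a continuously varying spanning $c$-family, cover the disks $\tfrac{1}{20}\mathcal{C}$ by finitely many $f$-center balls of radius $\tilde K_1\sigma$, and then apply Proposition~\ref{prop break zero global} inductively at each center, using the stability modulus $\delta'$ to ensure earlier steps survive. The paper organizes the iteration slightly more cleanly via nested $C^r$-balls $B_{C^r}(f_i,\delta_i')\subset B_{C^r}(f_{i-1},\delta_{i-1}')$ rather than your $1/2^i$-shrinking, but the content is the same.

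Two points to flag. First, you invoke Proposition~\ref{prop acc mod den dis} without checking its hypothesis that the fixed points of $f^k$ are isolated for all $k$; the paper handles this up front by a Kupka--Smale reduction, and you should do the same. Second, your sentence ``their union covers an open neighborhood in $M$ of $\bigcup_j\tfrac{1}{20}\mathcal{C}_j$'' is literally false, since a finite union of $2$-dimensional center disks cannot contain an open set in the $d$-dimensional manifold $M$. What you actually need (and what your preceding sentence correctly establishes) is that the set $M\setminus\tilde\Gamma_g^0(\sigma)$ is open in $M$; since it contains the $f^{(i-1)}$-center disks, it contains an $M$-open neighborhood of them, into which the nearby $g$-center disks $\tfrac{1}{20}\mathcal{C}$ fall by continuity of $g\mapsto\mathcal{D}_g$. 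With this rewording the argument is fine---and in fact slightly more careful than the paper's own treatment of the passage from $f$-center to $g$-center disks in~\eqref{slow rec fam}.
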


\begin{proof}
Fix some small  $\delta>0$. By Kupka-Smale's Theorem (see for instance \cite{K}), $C^r$-generically, periodic points are hyperbolic. Therefore, without loss of generality, we can assume that the fixed points of $f^{k}$ are isolated, for all $k\geq 1$. 

Let $\tilde{\sigma}_1=\tilde{\sigma}_1(f)>0$, $\tilde K_1=\tilde K_1(f)\in (0,1)$ and $\tilde R_1=\tilde R_1(f)>0$ be the constants given by Proposition \ref{prop break zero global}.  
For $\overline{R}>\max(\tilde R_1,1)$, let $\overline{N} = \overline{N}(f,\overline{R})>0$, $\overline{\rho} = \overline{\rho}(f,\overline{R})\in(0,\overline{R}^{-1})$ and $\overline{\sigma} = \overline{\sigma}(f,\overline{R})>0$ be the constants given by Proposition \ref{prop acc mod den dis}. Then,  there exists a constant $\delta_0'\in (0,\delta)$ such that for any diffeomorphism $g$ with $d_{C^1}(f,g)<\delta_0'$, there exists a $(\frac{1}{40},  4)$-spanning $c$-family $\cD_g$ for $g$ with at most $\overline{N}$ elements such that the map $g \mapsto \mathcal{D}_g$ is continuous, and 
\begin{equation*}
\overline{\rho}<\underline{r}(\cD_g)\leq \overline{r}(\cD_g) < \overline{R}^{-1};\qquad
\cD_g\text{ is }\overline{\sigma}\text{-sparse};\qquad
R_\pm(g, (\cD_g, \overline{\sigma})) > \overline{R}.
\end{equation*}
Take $\sigma \in \big(0,\min\big(\tilde{\sigma}_1,\frac{\overline{\sigma}}{10}\big)\big)$, and let $z_1,z_2,\dots,z_\ell$, $\ell \geq 1$, be a finite collection of points such that for any diffeomorphism $g$ with $d_{C^1}(f,g)<\delta_0'$, we have $g \in \mathscr{F}$, and  
\begin{equation}\label{slow rec fam}
\bigcup_{\mathcal{C}\in \mathcal{D}_g} \frac{1}{20} \cC\subset \bigcup_{i=1}^\ell \mathcal{W}_f^c(z_i,\tilde K_1\sigma)\subset (\cD_g, 10 \sigma). 
\end{equation}
As $\sigma\in (0,\tilde{\sigma}_1)$ and $R_{\pm}(f,B(z_1,10\sigma))>\tilde R_1$, we can apply Proposition \ref{prop break zero global} to get a diffeomorphism $f_1 \in \mathscr{F}$ such that for some $\delta_1'\in (0,\delta'(z_1,f_1))$, it holds $B_{C^r}(f_1,\delta_1')\subset B_{C^r}(f,\delta_0')$, 
and $x \notin\tilde \Gamma_{h}^0(\sigma)$, for all $x \in \mathcal{W}_f^c(z_1,\tilde K_1\sigma)$ and for all $h \in B_{C^r}(f_1,\delta_1')$. 

Similarly, as $R_{\pm}(f_1,B(z_2,10\sigma))>\tilde R_1$, we can apply Proposition \ref{prop break zero global} to get a diffeomorphism $f_2 \in \mathscr{F}$ 
such that for some $\delta_2'>0$, it holds $B_{C^r}(f_2,\delta_2')\subset B_{C^r}(f_1,\delta_1')\subset B_{C^r}(f,\delta_0')$, and $x \notin\tilde \Gamma_h^0(\sigma)$, for all $x \in \mathcal{W}_f^c(z_2,\tilde K_1\sigma)$ and for all $h \in B_{C^r}(f_2,\delta_2')$; 
in fact, as $B_{C^r}(f_2,\delta_2')\subset B_{C^r}(f_1,\delta_1')$, we have $x \notin\tilde \Gamma_h^0(\sigma)$, for all $x \in \mathcal{W}_f^c(z_1,\tilde K_1\sigma)\cup \mathcal{W}_f^c(z_2,\tilde K_1\sigma)$.  

Recursively, we thus obtain a diffeomorphism $g=f_\ell \in \mathscr{F}$ such that $d_{C^r}(f,g)<\delta_0'<\delta$ and such that $x \notin\tilde \Gamma_g^0(\sigma)$, for all $x \in \mathcal{W}_f^c(z_1,\tilde K_1\sigma)\cup \dots \cup \mathcal{W}_f^c(z_\ell,\tilde K_1\sigma)$.  By \eqref{slow rec fam}, we conclude that for each $\mathcal{C}\in \mathcal{D}_g$, and for each $x \in \frac{1}{20}\mathcal{C}$, we have  $x \notin \tilde\Gamma_g^0(\sigma)$. In particular, as $\cD_g$ is a $(\frac{1}{40},  4)$-spanning $c$-family for $g$, the center accessibility class $\mathrm{C}_g(x)$ of each point $x \in M$ is non-trivial. 
\end{proof}

\begin{remark}
	In fact, Corollary \ref{prop break zero global bis} also holds when the center dimension $\dim E_f^c$ is larger than $2$. Indeed, the proof  relies on the submersion from the space of perturbations to the phase space -- here, some center leaf -- constructed in Lemma \ref{lem def regular family and chart}  and Corollary \ref{coro un peu chiant}, which can be carried out also when $\dim E_f^c>2$. 
\end{remark}

\subsection{Density of accessibility}\label{opening one dim global}

In this part, we conclude the proof of Proposition \ref{main prop}. Let us start with the following result,  which strengthens Proposition \ref{main prop bis}. 
\begin{prop}\label{prop open global}
	There exist $C^2$-uniform constants  $\tilde{\sigma}_2=\tilde{\sigma}_2(f)>0$, $\tilde K_2=\tilde K_2(f)\in (0,1)$ and $\tilde R_2=\tilde R_2(f)>0$ such that for any $\delta>0$, for any $\sigma\in (0,\tilde{\sigma}_2)$, for any point $x_0\in M$ satisfying $R_{\pm}(f,B(x_0,10\sigma))>\tilde R_2$, there exists a partially hyperbolic diffeomorphism $g \in \mathscr{F}$ such that $d_{C^r}(f,g)< \delta$ and such that for some $\delta''=\delta''(x_0,g)>0$, it holds $\ac_h(x_0)\supset B(x_0,\tilde K_2\sigma)$, for all $h \in \mathscr{F}$ with $d_{C^1}(g,h)<\delta''$.  
\end{prop}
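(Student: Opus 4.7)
My plan is to combine Proposition \ref{prop break zero global} with Lemma \ref{lemme de construction des loops}, arguing that after a single $C^r$-small perturbation, every center accessibility class in a small center disk around $x_0$ is forced to be open, stably under further $C^1$-perturbations; this will then automatically give a neighborhood of $x_0$ in $\ac_h(x_0)$ of size $\sim \sigma$. Concretely, let $\tilde \sigma_1, \tilde K_1, \tilde R_1$ be the constants of Proposition \ref{prop break zero global} and set $\tilde \sigma_2 := \tilde \sigma_1$, $\tilde R_2 \geq \tilde R_1$. For $\sigma$ and $x_0$ as in the statement, I first apply Proposition \ref{prop break zero global} to produce $f_0 \in \mathscr{F}$ with $d_{C^r}(f,f_0) < \delta/2$ and some $\delta_0' > 0$ such that $x \notin \tilde\Gamma_h^0(\sigma)$ for every $x \in \mathcal{W}_f^c(x_0,\tilde K_1 \sigma)$ and every $h \in \mathscr{F}$ with $d_{C^1}(f_0,h) < \delta_0'$. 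In particular, for such $h$ and $x$, either $\ac_h(x)$ is open or $\mathrm{C}_h(x)$ is a one-dimensional arc of $\sigma$-scale. Fixing a small angle $\theta > 0$ and combining Proposition \ref{prop lamination} with Proposition \ref{lemma var des classes}, I can, after possibly shrinking $\delta_0'$ and passing to some $\tilde K_2 \in (0,\tilde K_1)$, ensure that for every such $h$ and every $x \in \Gamma_h^1 \cap \mathcal{W}_{f_0}^c(x_0,\tilde K_2 \sigma)$, the projection $\Pi_x^c \mathrm{C}_h(x,10\sigma)$ lies inside a $\theta$-cone $\mathscr{C}_1 \subset \R^2$.

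Next I apply Lemma \ref{lemme de construction des loops} to $f_0$ with this cone $\mathscr{C}_1$: this yields continuous families $\{\gamma_1^x\}, \{\gamma_2^x\}$ of non-degenerate closed $10$ us- and su-loops at $(f_0,x)$ for $x \in \mathcal{W}_{f_0}^c(x_0,\tilde K_2 \sigma)$, whose endpoints $\omega_i^x$ satisfy
\[
\big(\Pi_x^c \omega_1^x, \Pi_x^c \omega_2^x\big) \in \big(\mathscr{C}^+ \times \mathscr{C}^-\big) \cup \big(\mathscr{C}^- \times \mathscr{C}^+\big).
\]
Setting $g := f_0$, the one-parameter families used in the proof of Lemma \ref{lemme de construction des loops} provide continuous arcs in $\ac_g(x) \cap \mathcal{W}_g^c(x)$ from $x$ to $\omega_i^x$, so $\omega_i^x \in \mathrm{C}_g(x)$ and $\Pi_x^c \mathrm{C}_g(x)$ meets both $\mathscr{C}^+$ and $\mathscr{C}^-$. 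As a one-dimensional class would have been confined to $\mathscr{C}_1$, this forces $x \notin \Gamma_g^1$; combined with $x \notin \Gamma_g^0$, Theorem \ref{theorem structure} gives that $\ac_g(x)$ is open for every $x \in \mathcal{W}_{f_0}^c(x_0,\tilde K_2 \sigma)$.

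For stability in $C^1$, for $h \in \mathscr{F}$ sufficiently $C^1$-close to $g$, Lemma \ref{lemma continuation} provides continuations $\gamma_i^{x,h}$ whose endpoints $\omega_i^{x,h}$ depend continuously on $(x,h)$; since the $\Pi_x^c \omega_i^x$ are at positive distance from $\partial \mathscr{C}_1$ by construction, by compactness in $x$ there exists $\delta'' > 0$ so that $\Pi_x^c \omega_i^{x,h} \in \mathscr{C}^\pm$ persists for $d_{C^1}(g,h) < \delta''$ and all $x$ in the disk. Re-running the cone obstruction with $h$ in place of $g$ yields $\ac_h(x)$ open for every $x \in \mathcal{W}_{f_0}^c(x_0,\tilde K_2 \sigma)$. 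Since this center disk is connected and each $\mathrm{C}_h(x)$ is a two-dimensional open subset of $\mathcal{W}_h^c(x)$ containing $x$, neighboring points lie in the same accessibility class, so the entire disk is contained in $\ac_h(x_0)$. The local product structure of the strong stable and unstable foliations then thickens this into an open neighborhood of $x_0$ in $\ac_h(x_0)$ of $M$-radius $\sim \sigma$, which, possibly after shrinking $\tilde K_2$ a last time, contains $B(x_0,\tilde K_2 \sigma)$.

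The main obstacle is the delicate coordination of the constants $\theta$, $\delta''$, $\tilde K_2$ so that, simultaneously and uniformly over $x \in \mathcal{W}_{f_0}^c(x_0,\tilde K_2 \sigma)$ and over $h$ in a $C^1$-neighborhood of $g$: the cone containment of one-dimensional classes holds (requiring a uniform version of Proposition \ref{lemma var des classes} along the disk, which follows by combining Proposition \ref{prop lamination} with Corollary \ref{lemma dependence family}), the continued endpoints $\omega_i^{x,h}$ remain outside that cone, and the resulting open neighborhood of $x_0$ in $\ac_h(x_0)$ has $M$-radius comparable to $\sigma$ rather than some a priori smaller scale.
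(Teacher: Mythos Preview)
There is a genuine gap: you set $g:=f_0$ and claim that the endpoints $\omega_i^x$ produced by Lemma~\ref{lemme de construction des loops} already force every center accessibility class in the disk to be open. But Lemma~\ref{lemme de construction des loops} only guarantees $\Pi_x^c\omega_i^x\in\mathscr{C}^{\pm}$, and by definition $\mathscr{C}^{\pm}=\mathscr{C}_*^{\pm}\cup\{0_{\R^2}\}$ contains the origin. Your assertion that ``the $\Pi_x^c\omega_i^x$ are at positive distance from $\partial\mathscr{C}_1$ by construction'' is not supported by the lemma: in its proof the parameter $t^x$ is chosen as the \emph{first} time the endpoint enters $\mathscr{C}_1^r$, so $\Pi_x^c\omega_1^x$ lies exactly on $\partial\mathscr{C}_1$ (or at the origin), not at positive distance from it. Worse, if some $x\in\cWc_{f_0}(x_0,\tilde K_2\sigma)$ actually has $x\in\Gamma_{f_0}^1$ --- and nothing in Proposition~\ref{prop break zero global} excludes this --- then every endpoint of every $f_0$-loop at $x$ lies in $\cac_{f_0}(x)$, which projects into $\mathscr{C}_1$; combined with $\Pi_x^c\omega_i^x\in\mathscr{C}^{\pm}$ this forces $\omega_i^x=x$, and your cone obstruction gives no contradiction whatsoever. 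The same problem kills the $C^1$-stability paragraph: there is no positive lower bound on $\|\Pi_x^c\omega_i^x\|$ to preserve under perturbation.

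This is exactly why the paper does \emph{not} take $g=f_0$. After obtaining $f_1$ ($=$ your $f_0$) and the loops $\gamma_1^x,\gamma_2^x$, the paper builds a further $4$-parameter $C^r$ deformation $\hat f$ of $f_1$, supported near the first corners $\alpha_1^x,\alpha_2^x$, and invokes Proposition~\ref{determinant for smooth deformations} to obtain a uniform submersion $b\mapsto\big(\Pi_x^c H_{T,\hat\gamma_1^x}(b,x),\Pi_x^c H_{T,\hat\gamma_2^x}(b,x)\big)$ (Lemma~\ref{lemma end lemma}). This submersion is what allows one to choose a parameter $b$ pushing the endpoints, simultaneously for all $x$ in the disk, strictly into $\mathscr{C}_*^{+}$ or $\mathscr{C}_*^{-}$, i.e.\ away from the origin and away from $\overline{\mathscr{C}_1}$. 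Only then does the cone containment \eqref{var clases duex} contradict $x\in\Gamma_{f_b}^1$. Your argument is missing this entire perturbation/submersion step, which is the heart of the proof.
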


\begin{proof}
	Fix some small $\delta>0$. 
	Let $\tilde{\sigma}_1=\tilde{\sigma}_1(f)>0$, $\tilde K_1=\tilde K_1(f)\in (0,1)$ and $\tilde R_1=\tilde R_1(f)>0$ be the constants in Proposition \ref{prop break zero global}. Let $x_0\in M$ be a point satisfying $R_{\pm}(f,B(x_0,10\sigma))>\widetilde{R}$, for some  $\widetilde{R}>\tilde R_1$ and $\sigma \in (0,\tilde{\sigma}_1)$, and take $\widetilde{K} \in (0,\tilde K_1)$. Then, by Proposition \ref{prop break zero global}, there exists a partially hyperbolic diffeomorphism $f_1 \in \mathscr{F}$ such that $d_{C^r}(f,f_1)< \frac{\delta}{2}$ and such that for some $\delta'\in (0,\frac\delta 2)$, we have $x \notin\tilde \Gamma_g^0(\sigma)$, for all $x \in B(x_0,\widetilde{K}\sigma)$ and for all $g \in \mathscr{F}$ with $d_{C^1}(f_1,g)<\delta'$. 
	
	
	In the following, for any $x \in \Gamma_{g}^1\cap\cWc_{f_1}(x_0,\widetilde{K}\sigma)$, we denote by
	$\Pi_x^c\colon M \to \R^2$ the map in Lemma \ref{lemma constr phi} for the diffeomorphism $f_1$. 
	By Proposition \ref{prop lamination} and Proposition \ref{lemma var des classes}, if $\delta'$ is sufficiently small, then for any $g \in \mathscr{F}$ with $d_{C^1}(f_1,g)<\delta'$  and for any $x \in \Gamma_{g}^1\cap\cWc_{f_1}(x_0,\widetilde{K}\sigma)$, it holds
	\begin{equation}\label{cone cone}
	\Pi_x^c\cac_{g}(x,10\sigma)\subset \mathscr{C}_1,
	\end{equation}
	for some cone $\mathscr{C}_1$ centered at $0_{\R^2}$; as in Section \ref{section adapt loop}, we let $\mathscr{C}:=\big(\R^2  \setminus \mathscr{C}_1\big) \cup \{0_{\R^2}\}$, and let $\mathscr{C}^+$, $\mathscr{C}^-$ be the closures of the two connected components of $\mathscr{C}\setminus \{0_{\R^2}\}$. For any $x \in \cWc_{f_1}(x_0,\widetilde{K}\sigma)$, we let $\gamma_1^x=[x,\alpha_{1}^x,\dots,\omega_{1}^x]$, resp. $\gamma_2^x=[x,\alpha_{2}^x,\dots,\omega_{2}^x]$  be the non-degenerate closed $10$ us-loop, resp. non-degenerate closed $10$ su-loop at $(f_1,x)$ given by Lemma \ref{lemme de construction des loops} for $f_1$ in place of $f$, with 
\begin{equation}\label{good cone plus moins}
\big(\Pi_x^c\omega_1^x,\Pi_x^c\omega_2^x\big)\in \big(\mathscr{C}^+ \times \mathscr{C}^-\big)\cup \big(\mathscr{C}^- \times \mathscr{C}^+\big).
\end{equation}
	
	In the following,  we will define a new deformation $\hat f$ obtained by considering infinitesimal deformations localized near the points $\alpha_1^x$ and $\alpha_2^x$ for $x \in \cWc_{f_1}(x_0,\widetilde{K}\sigma)$. 
	Arguing as in Lemma \ref{lem def regular family and chart}, for $\star=1,2$, we can construct an infinitesimal $C^r$ deformation at $f_1$ with $2$-parameters $V_\star \colon \R^{2}\times M \to T M$ such that $\mathrm{supp}(V_\star)\subset B(x_0,10\sigma)$, and such that for some constants $\widetilde C>0$, $\tilde \kappa>0$, we have: for any $x\in \cWc_{f_1}(x_0,\widetilde{K}\sigma)$, 
	\begin{enumerate}
		\item $V_\star$ is adapted to $(\gamma^x_\star, \sigma,\widetilde C,\widetilde{R})$; 
		\item\label{point deux du lemme}
		for any corner $z\neq \alpha_\star^x$ of $\gamma_\star^x$,  it holds 
		\begin{equation*} 
		D_{B}(\pi_cV_\star(B, z)) = 0, 
		\end{equation*}
		where $\pi_c \colon TM  \to E^{c}_{f}$ denotes  the canonical projection, and  
		\begin{equation*} 
		\Big|\det D_{B}\big(\pi_cV_\star\big(B, x_1^x\big(\frac{k}{k_0}\big)\big)\big)\Big| > \tilde \kappa.
		\end{equation*}
	\end{enumerate} 
Indeed, for $\star=1,2$, as the map $\cWc_{f_1}(x_0,\widetilde{K}\sigma) \ni x \mapsto \gamma^x_\star$ is continuous, and by \eqref{distance etre points}, we can construct the infinitesimal deformation $V_\star$ such that the $\mathrm{supp}(V)$ is localized around the set $\{\alpha_\star^x\}_x$ of the first corners of the loops $\gamma^x_\star$.
	
Let then $V\colon \R^4 \times M \to TM$ be the infinitesimal $C^r$ deformation defined as 
$$
V(B,\cdot):=B^1 V_1(\cdot) + B^2 V_2(\cdot),\quad \forall\, B=(B^1,B^2)\in \R^2 \times \R^2.
$$
In particular, $V$ satisfies $\mathrm{supp}(V)\subset B(x_0,10\sigma)$,  for any $x\in \cWc_{f_1}(x_0,\widetilde{K}\sigma)$, 
$V$ is adapted to $(\gamma_1^x, \sigma,\widetilde C,\widetilde{R})$ and $(\gamma_2^x, \sigma,\widetilde C,\widetilde{R})$, and for any corner $z\neq \alpha_1^x,\alpha_2^x$ of $\gamma_1^x$, $\gamma_2^x$,  
\begin{equation*} 
D_{B}(\pi_cV(B, z)) = 0, 
\end{equation*}
while for $E_1:=\R^2 \times \{0_{\R^2}\}$, $E_2:=\{0_{\R^2}\} \times \R^2$, we have
\begin{equation}\label{champ de vecteur def} 
\Big|\det\big(E_\star \ni B \mapsto D_{B}\big(\pi_cV\big(B, \alpha_\star^x\big)\big)\big)\Big| > \tilde \kappa, \quad \star=1,2.
\end{equation}

	For some small $\delta_1>0$, let us consider the $C^r$ deformation $\hat f \colon B(0_{\R^{4}},\delta_1) \times M \to M$ at $f_1$ with $4$-parameters generated by the infinitesimal $C^r$ deformation $V$.  As before, for any $b \in B(0_{\R^{4}},\delta_1)$, we set $f_b:=\hat f(b,\cdot)$. 
	By \eqref{cone cone}, if $\delta_1$ and $\sigma$ are sufficiently small, then for all $b \in B(0_{\R^4},\delta_1)$, and for all $x \in \Sigma_b(\sigma):=\Gamma_{f_b}^1\cap\cWc_{f_1}(x_0,\widetilde{K}\sigma)$, it holds 
	\begin{align}\label{var clases duex}
	\Pi_x^c\cac_{f_b}(x,10\sigma)\subset \mathscr{C}_1.
	\end{align}

Let $T=T(\hat f)$ be as in \eqref{def T}. We denote by $\hat \gamma_1^x$, $\hat \gamma_2^x$ the respective lifts of $\gamma_1^x$ and $\gamma_2^x$ for $T$ according to Definition \ref{defii liftt}.  By \eqref{champ de vecteur def}, thanks to Proposition \ref{determinant for smooth deformations}, and arguing as in Corollary \ref{coro un peu chiant}, we obtain:
	\begin{lemma}\label{lemma end lemma}
		The map 
		\begin{equation*}
		\Phi\colon\left\{
		\begin{array}{rcl}
		B(0_{\R^{4}},\delta_1) \times \cWc_{f_1}(x_0,\widetilde{K}\sigma)  & \to & \R^{4}=\R^2 \times \R^2\\
		(b,x) & \mapsto & \Big(\Pi_x^c H_{T,\hat \gamma_1^x}(b,x),\Pi_x^c  H_{T,\hat \gamma_2^x}(b,x)\Big)
		\end{array}
		\right.
		\end{equation*} 
		satisfies 
		\begin{equation}\label{closeness deriv}
		\big|D_b|_{b=0} \Phi(\cdot,x)-D_b|_{b=0} \Phi(\cdot,y)\big| \leq \rho(\sigma),\qquad \forall\, x,y \in \cWc_{f_1}(x_0,\widetilde{K}\sigma),
		\end{equation}
		for some function $\rho\colon \R_+ \to \R_+$ such that $\lim\limits_{\sigma \to 0}\rho(\sigma)=0$, and  there exists  $\kappa>0$ such that  for any $x \in\cWc_{f_1}(x_0,\widetilde{K}\sigma)$, it holds 
		\begin{equation}\label{determin fin fin bisbi}
		|\det D_b|_{b=0}\big(\Phi(\cdot,x)\big)| > \kappa.	
		\end{equation} 
	\end{lemma}

Indeed, for $\star=1,2$, since the map $\cWc_{f_1}(x_0,\widetilde{K}\sigma) \ni x \mapsto \gamma^x_\star$ is continuous, it follows from Lemma \ref{lemme Davi} and Corollary \ref{lemma dependence family} that the partial derivatives of the holonomies $H_{T,\hat \gamma_\star^x}$ with respect to $b$ are uniformly close, for all $x \in\cWc_{f_1}(x_0,\widetilde{K}\sigma)$. Hence, by the definition of $\Phi$, and by Proposition \ref{determinant for smooth deformations}, the maps $\{\Phi(\cdot,x)\}_{x \in \cWc_{f_1}(x_0,\widetilde{K}\sigma)}$ are uniform submersions, which gives \eqref{closeness deriv} and \eqref{determin fin fin bisbi}. \\

	By \eqref{good cone plus moins},  for each $x \in\cWc_{f_1}(x_0,\widetilde{K}\sigma)$, we have 
	$$
	\Phi(0,x)\in \big(\mathscr{C}^+ \times \mathscr{C}^-\big)\cup \big(\mathscr{C}^- \times \mathscr{C}^+\big).
	$$ 
	Let us denote by $S^+$, resp. $S^-$ the set of all points $x \in \cWc_{f_1}(x_0,\widetilde{K}\sigma)$ such that $\Phi(0,x)\in \mathscr{C}^+ \times \mathscr{C}^-$, resp. $\Phi(0,x)\in \mathscr{C}^- \times \mathscr{C}^+$, so that $S^+ \cup S^-=\cWc_{f_1}(x_0,\widetilde{K}\sigma)$. 
	By \eqref{closeness deriv}-\eqref{determin fin fin bisbi}, 
	there exists a perturbation parameter 
	$b\in B(0_{\R^{4}},\delta_1)$ such that 
	\begin{align*}
	\Pi_x^c H_{T,\hat \gamma_1^x}(b,x)&\in \mathscr{C}^+_*=\mathscr{C}^+\setminus \{0_{\R^2}\},\quad \text{ for all } x \in S^+,\\
	\Pi_x^c H_{T,\hat \gamma_2^x}(b,x)&\in \mathscr{C}^-_*=\mathscr{C}^-\setminus \{0_{\R^2}\},\quad \text{ for all } x \in S^-.
	\end{align*} 
	As $S^+ \cup S^-=\cWc_{f_1}(x_0,\widetilde{K}\sigma)$, we deduce that for each $x \in  \cWc_{f_1}(x_0,\widetilde{K}\sigma)$, 
	$$
	\text{either}\quad \Pi_x^c H_{T,\hat \gamma_1^x}(b,x) \notin \mathscr{C}_1,\quad \text{or}\quad \Pi_x^c H_{T,\hat \gamma_2^x}(b,x) \notin \mathscr{C}_1.
	$$
	By \eqref{var clases duex}, we deduce that $\Sigma_b(\sigma)=\emptyset$, i.e., $\Gamma_{f_b}^1=\emptyset$. Therefore,   
	by Theorem \ref{theorem structure}, the accessibility class $\ac_{f_b}(x)$ of each point $x \in \cWc_{f_1}(x_0,\widetilde{K}\sigma)$ is open.  
	Moreover, if $\delta_1$ is sufficiently small, then by construction, the diffeomorphism $g:=f_b$ satisfies   
	$$
	d_{C^r}(f,g) \leq d_{C^r}(f,f_1)+d_{C^r}(f_1,f_b)<\delta,
	$$ 
	which concludes the proof of Proposition \ref{prop open global}. 
\end{proof}

\begin{proof}[Proof of Proposition \ref{main prop}]
	Fix $\delta>0$ arbitrarily small. 
	Let  $\tilde{\sigma}_2=\tilde{\sigma}_2(f)>0$, $\tilde K_2=\tilde K_2(f)\in (0,1)$ and $\tilde R_2=\tilde R_2(f)>0$ be the $C^2$-uniform constants given by Proposition \ref{prop open global}. By Proposition \ref{prop acc mod den dis}, there exist $C^1$-uniform constants $\overline{N} = \overline{N}(f,\tilde R_2)>0$, $\overline{\rho} = \overline{\rho}(f, \tilde R_2)\in(0,\tilde R_2^{-1})$ and $\overline{\sigma} = \overline{\sigma}(f,\tilde R_2)>0$ such that for all diffeomorphism $g$ sufficiently $C^1$-close to $f$, there exists   a $(\frac{1}{40},  4)$-spanning $c$-family $\cD_g$ for $g$ with at most $\overline{N}$ elements such that 
	\begin{enumerate}
		\item $\overline{\rho}<\underline{r}(\cD_g)\leq \overline{r}(\cD_g) < \tilde R_2^{-1}$;
		\item $\cD_g$ is $\overline{\sigma}$-sparse;
		\item $R_\pm(g, (\cD_g, \overline{\sigma})) > \tilde R_2$.
	\end{enumerate}
and such that the map $g \mapsto \mathcal{D}_g$ is continuous. 
Let $\sigma \in (0,\frac{1}{10}\min(\tilde{\sigma}_2,\overline{\sigma}))$. By compactness, we can take a finite collection of points $x_1,\dots,x_m\in M$ such that  
$$
\frac{1}{20}\mathcal{D}_f\subset U:=\bigcup_{i=1}^m B(x_i,\tilde K_2 \sigma)  \subset (\cD_f, \overline{\sigma}).
$$
Note that $x_i\in M$ satisfies $R_{\pm}(f,B(x_i,10\sigma))>\tilde R_2$, for each $i\in \{1,\dots,m\}$. Therefore, we can apply Proposition \ref{prop open global} inductively to get  a partially hyperbolic diffeomorphism $g \in \mathscr{F}$ such that $d_{C^r}(f,g)< \delta$ and such that $\ac_g(x_i)\supset B(x_i,\tilde K_2\sigma)$, for all $i\in \{1,\dots,m\}$. By connectedness of the disks in  $\mathcal{D}_f$, each center disk in the family $\frac{1}{20}\mathcal{D}_f$ is contained in a single accessibility class for $g$. Moreover, if $\delta$ is sufficiently small, and by continuity of the map $h \mapsto \mathcal{D}_h$, each center disk in the family $\frac{1}{40}\mathcal{D}_g$ is contained in a single accessibility class for $g$. As $\cD_g$ is a $(\frac{1}{40},  4)$-spanning $c$-family   for $g$, we deduce that $g$ is accessible, as wanted. 
\end{proof}

\small
	\vspace{0.4cm}
\begin{flushleft}
	\textsc{Martin Leguil}\\
	CNRS-Laboratoire de Mathématiques d'Orsay, UMR 8628, Université Paris-Saclay, Orsay Cedex 91405, France \& Laboratoire Ami\'enois de Math\'ematique Fondamentale et Appliqu\'ee (LAMFA), UMR 7352, 
	Université de Picardie Jules Verne, 33 rue Saint Leu, 80039 Amiens, France\\
	email: \texttt{martin.leguil@u-picardie.fr}
	
	\vspace{0.4cm}
	
	\textsc{Luis Pedro Pi\~neyr\'ua}\\
	CMAT, Facultad de Ciencias\\
	Universidad de la Rep\'ublica, Montevideo, Uruguay\\
	email: \texttt{lpineyrua@cmat.edu.uy} 
\end{flushleft}

\end{document}